\newcommand{\ProbB}[1]{\ensuremath{\mathbb{P} \left(#1 \right) }}
\newcommand{\EE}[1]{\ensuremath{\mathbb{E} \left[#1 \right]}}
\newcommand{\E}{\ensuremath{\mathbb{E}}}
\newcommand{\V}{\ensuremath{\mathrm{Var}}}
\newcommand{\Prob}{\ensuremath{\mathbb{P}}}
\newcommand{\nc}{\newcommand}
\nc{\Nset}{{\mathbb{N}}} \nc{\Rset}{{\mathbb{R}}}
\nc{\R}{{\mathbb{R}}} \nc{\N}{{\mathbb{N}}}
\nc{\Zset}{{\mathbb{Z}}}
\newcommand{\Do}{\ensuremath{\mathrm{D}([0,1])}}
\newcommand{\Doab}{\ensuremath{\mathrm{D}([a,b])}}
\newcommand{\Co}{\ensuremath{\mathrm{C}(\Sigma^\infty)}}	
\newcommand{\Law}{\ensuremath{\mathcal{L}}}
\begin{document}
\newtheorem{theorem}{Theorem}[section]
\newtheorem{korollar}[theorem]{Corollary}
\newtheorem{lemma}[theorem]{Lemma}
\newtheorem{proposition}[theorem]{Proposition}
\renewcommand{\labelenumi}{\roman{enumi}) }

\title{\bf Process convergence for the complexity of \\
Radix Selection on Markov sources}
\author{Kevin Leckey \\School of Mathematical Sciences\\
Monash University \\ VIC 3800, Melbourne\\
Australia \and
Ralph Neininger\\
Institute for Mathematics\\
Goethe University Frankfurt\\
60054 Frankfurt am Main\\
Germany
\and
Henning Sulzbach\thanks{Corresponding author. Email: henning.sulzbach@gmail.com. Present address: School of Mathematics, University of Birmingham, Birmingham B15 2TT, Great
Britain}\\
School of Computer Science\\
McGill University\\ H3A 2K6, Montreal\\
Canada
}

\date{\today}
\maketitle

\begin{abstract}
A fundamental algorithm for selecting ranks from a finite subset of an ordered set is Radix Selection. This algorithm requires the data to be given as strings of symbols over an ordered alphabet, e.g., binary expansions of real numbers. Its complexity is measured by the number of symbols that have to be read. In this paper the model of independent data identically  generated from a Markov chain is considered.

The complexity is studied as a stochastic process indexed by the set of infinite strings over the given alphabet. The orders of mean and variance of the complexity and, after normalization, a limit theorem with a centered Gaussian process as limit are derived. This implies an analysis for two standard models for the ranks: uniformly chosen ranks, also called grand averages, and the worst case rank complexities which are of interest in computer science.

For uniform data and the asymmetric Bernoulli model (i.e.~memoryless sources), we also find weak convergence for the normalized process of complexities when indexed by the ranks while for more general Markov sources these processes are not tight under the standard normalizations.
\end{abstract}

\noindent
{\em  AMS 2010 subject classifications.} Primary 60F17, 60G15 secondary 68P10, 60C05, 68Q25.\\
{\em Key words.} Radix Selection,  Gaussian process, Markov source model, complexity, weak convergence, probabilistic analysis of algorithms.

\section{Introduction} \label{sec:intro}
In the probabilistic analysis of algorithms the complexity of fundamental algorithms is studied under models of random input. This allows to describe the typical behavior of an algorithm and is often more meaningful than the worst case  complexity  classically considered in computer science. In this paper we study the algorithm Radix Selection on independent strings generated by a Markov source.

Radix Selection  selects an order statistic from a
set of data in $[0,1]$ as follows. First, an integer $b\ge 2$ is fixed and the  unit interval is
decomposed into the intervals, also called {\em buckets}, $[0,1/b), [1/b,2/b), \ldots, [(b-2)/b,(b-1)/b)$ and $[(b-1)/b,1]$. The data are assigned to these buckets according to their values. Note that this just corresponds to grouping the data according to the first symbols of their $b$-ary expansions. If the bucket containing the datum with rank to be selected contains further data, the algorithm is recursively applied by decomposing this bucket equidistantly using the integer $b$. The algorithm stops once the bucket containing the sought rank  contains no other data. 
Assigning a datum to a bucket is called a {\em bucket operation}, and the algorithm's complexity is measured by the total number of bucket operations required. An illustration of this procedure is given in Figure \ref{pic_radsel}. An algorithmic formulation of the routine is given in the appendix.
\begin{figure}
\begin{center}
\scalebox{0.7}{
\begin{tikzpicture}[edge from parent/.style={draw,-latex}]
\tikzstyle{level 0}=[level distance=1cm]
\tikzstyle{level 1}=[sibling distance=50mm,  level distance=2.5cm]
\tikzstyle{level 2}=[sibling distance=27.5mm,  level distance=2cm]
\tikzstyle{level 3}=[sibling distance=25mm, level distance=2cm]
\tikzstyle{level 4}=[sibling distance=22.5mm, level distance=1.5cm]
\node[fill=green!15,draw,text width=5em, text centered]{$1101\ldots$ \\ $0001\ldots$ \\ $0110\ldots$ \\ $0000\ldots$ \\ $1111\ldots$ \\$1110\ldots$}
  child{node[fill=green!15,draw,text width=5em, text centered]{$0001\ldots$ \\ $0110\ldots$ \\ $0000\ldots$ \\}
    child{node[fill=green!15,draw,text width=5em, text centered]{$0001\ldots$ \\ $0000\ldots$}
      child{node[fill=green!15,draw,text width=5em, text centered]{$0001\ldots$ \\$0000\ldots$}
	child{node[fill=red!15,draw,text width=5em, text centered]{$0000\ldots$}
	}
	child{node[fill=green!15,draw,text width=5em, text centered]{$0001\ldots$}
	}
      }
      child{node[fill=red!15,draw,text width=5em, text centered]{\;}
      }
    }
    child{node[fill=red!15,draw,text width=5em, text centered]{$0110\ldots$}
    }
  }
  child{node[fill=red!15,draw,text width=5em, text centered]{$1101\ldots$ \\ $1111\ldots$ \\$1110\ldots$}
  }
;
\end{tikzpicture}
}
\end{center}
\caption{A schematic representation of Radix Selection with $b=2$ buckets
 searching for the element of rank $2$ in a list of $6$ elements given in their binary expansions. Arrows indicate the splitting into buckets, the green color indicates  buckets containing the element of rank $2$. The total number of bucket operations is $6+3+2+2=13$. }
\label{pic_radsel}
\end{figure}
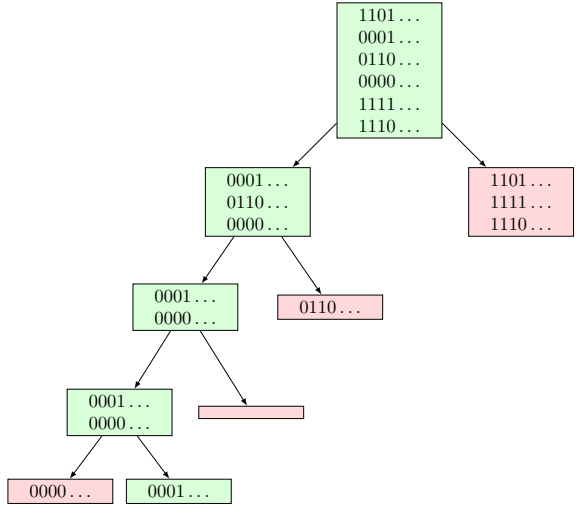

Radix Selection is especially suitable when data are stored as  expansions in base (radix) $b$, the case $b=2$ being the most common on the level of machine data. For such expansions a bucket operation breaks down to access a digit (or bit).

\medskip
{\bf The Markov source model and complexity.} We study the complexity of Radix Selection in  the probabilistic setting that $n$ data are modeled independently with $b$-ary expansions generated from a homogeneous Markov chain on the alphabet $\Sigma=\{0,\ldots,b-1\}$ with a fixed integer $b\ge 2$.  The Markov chain is
characterized by its initial distribution $\mu=\sum_{r=0}^{b-1}\mu_r \delta_r$ where $\mu_r\in [0,1]$ and $\sum_{r=0}^{b-1}\mu_r=1$,
and its transition matrix $(p_{ij})_{i,j \in \Sigma}$. Here, $\delta_x$ is the Dirac
 measure in $x\in \R$. We always assume that $0 < \mu_i, p_{ij} < 1$ for all $i, j \in \Sigma$.\footnote{This assumption greatly simplifies proofs. All theorems in Section \ref{sec:intro} remain true for arbitrary $\mu$ if the chain is irreducible and $0 < p_{ij} < 1$ for some $i,j \in \Sigma$.}

Let $\Sigma^\infty$  denote the set of infinite strings over the alphabet $\Sigma$.
For two strings $v = (v_i)_{i \geq 1}, w = (w_i)_{i \geq 1} \in \Sigma^\infty$,  we denote the length of the longest common prefix, the so-called {\em string coincidence}, by
\begin{align} \label{def:coin} j(v,w) = \max\{ i \in \N : (v_1, \ldots, v_i) =  (w_1, \ldots, w_i) \}. \end{align}
 We write $v < w$ if $v_{j(v,w)+1} < w_{j(v,w)+1}$.
Let $S_1, S_2, \ldots$ be a sequence of independent  strings generated by our Markov source. For $v \in \Sigma^\infty$ we set
$$\Lambda_{n,k}(v) = \# \{1 \leq i \leq n : j(v, S_i) \geq k \}, \quad k \geq 0,$$
and
\begin{align}\label{def:z}Z_n(v) = \sum_{k \geq 0} \Lambda_{n,k}(v) \mathbf 1_{\Lambda_{n,k}(v) > 1}.\end{align}
For the complexity of Radix Selection, i.e., the number of bucket operations $Y_n(\ell)$ necessary to retrieve the element of rank $1\leq \ell \leq n$ in the set $\{S_1, \ldots, S_n\}$, we obtain  
\begin{align}\label{def:y_n}Y_n(\ell) = Z_n(S_{(\ell)}), \quad 1 \leq \ell \leq n.\end{align}
Here, and subsequently, we write $S_{(1)} < \cdots < S_{(n)}$ for the order statistics of the strings.

We call our Markov source {\em memoryless} if all symbols within  data are  independent and identically distributed over $\Sigma$. Equivalently, for all $r \in \Sigma$, we have $\mu_r = p_{0r} = \cdots = p_{(b-1)r} =: p_r$.
For $b=2$, the memoryless case is also called the {\em Bernoulli model}. The case $\mu_i = p_{ij}=1/b$ for all $i,j\in\Sigma$  is the case of a memoryless source where all symbols are uniformly distributed over $\Sigma$.  We call this the {\em uniform model}.

\medskip
{\bf Scope of analysis.} We study the complexity to select ranks using three models for the ranks. First, all possible ranks are considered simultaneously. Hence, we consider the stochastic process of the complexities indexed by the ranks $1,\ldots,n$. We choose a scaling in time and space which asymptotically gives access to the complexity to select quantiles from the data, i.e., ranks of the size (roughly) $t n$ with $t\in[0,1]$. We call this model for the ranks the {\em quantile-model}. Second, we consider the complexity of a random rank uniformly distributed over $\{1,\ldots, n\}$ and independent from the data. This is the model proposed and studied (in the uniform model) in Mahmoud et al. \cite{mafljare00}. The complexities of all ranks are averaged in this model and, in accordance with the literature, we call it the model of {\em grand averages}. Third, we study the worst rank complexity. Here, the data are still random and the worst case is taken over the possible ranks $\{1,\ldots, n\}$. We call this {\em worst case rank}.

\medskip
{\bf Function spaces.} In the quantile-model we formulate functional limit theorems in two different spaces. First, we endow $\Sigma^\infty$ with the topology $\mathcal T_\infty$ where $v(n) \to v$ if and only if $j(v(n), v) \to \infty$. For any $a \in (0,1)$, the ultrametric $d_a(v,w) = a^{j(v,w)}, v, w \in \Sigma^\infty$ generates $\mathcal T_\infty$. It is easy to see that  $(\Sigma^\infty,\mathcal T_\infty)$ is a compact space. Let $\Co$  denote the space of continuous functions $f : \Sigma^\infty \to \R$ endowed with the supremum norm $\|f \| = \sup \{  |f(v)| : v \in \Sigma^\infty\}$. As $\Sigma^\infty$ is compact, $\Co$ is a separable Banach space. 

Second, we use the  space of real-valued c{\`a}dl{\`a}g functions $\Do$ on the unit interval.  A function $f: [0,1] \to \R$ is c{\`a}dl{\`a}g,  if, for all $t \in [0,1)$,  \begin{align*}
f(t) = f(t+) := \lim_{s \downarrow t} f(s), \end{align*} and the following limit exists for all $t \in (0,1]$:
\begin{align*}
 f(t-) := \lim_{s \uparrow t} f(s). \end{align*}
We define $\|f \|= \sup \{|f(t)| : t \in [0,1]\}$, and note that $\|f\| < \infty$ for all $f \in \Do$. The standard topology  on $\Do$ is Skorokhod's $J_1$-topology turning $\Do$ into a Polish space. A sequence of c{\`a}dl{\`a}g functions $f_n, n \geq 1$ converges to $f \in \Do$ if and only if there exist strictly increasing continuous bijections $\lambda_n, n\geq 1$ on $[0,1]$ such that, both
$\lambda_n \to \text{id}$ and $f_n\circ \lambda_n \to f$ uniformly on $[0,1]$. The space $\Doab$ is defined analogously based on the closed interval $[a,b]$ with
$- \infty < a < b < \infty$.
For more details on c{\`a}dl{\`a}g functions, we refer to Billingsley's book \cite[Chapter 3]{Billingsley1999}.

\medskip
{\bf Fundamental quantities.}  Let $\Sigma^* = \bigcup_{n \geq 0} \Sigma^n$ with the convention $\Sigma^0=\{\emptyset\}$. Further, let $\Sigma^\infty_0$ ($\Sigma^\infty_{b-1}$, respectively) be the set of infinite strings with a finite and non-zero number of entries different from $0$ ($b-1$, respectively). Note that $\Sigma^\infty_0$ and
$\Sigma^\infty_{b-1}$ are
countably infinite.

For $v = v_1\ldots v_k \in \Sigma^*$, let $\pi(v)$ denote the probability that the Markov chain starts with prefix $v$:
$$\pi(v) = \mu_{v_1} p_{v_1v_2} \cdots p_{v_{k-1}v_k}, \quad k \geq 1, \quad \text{and} \quad\pi(\emptyset) := 1.$$
$\pi(v)$ is called the {\em fundamental probability} associated with $v$.
For $v \in \Sigma^*$ and $w \in \Sigma^* \cup \Sigma^\infty$, we write $v \preceq w$ if $w$ starts with prefix $v$, that is, $w = v w'$ for some $w' \in \Sigma^* \cup \Sigma^\infty$.
Upon considering a finite length vector as infinite string by attaching an infinite number of zeros, we shall extend the definition of $j(v,w)$ in \eqref{def:coin}
and the relation $<$ to $v,w \in \Sigma^* \cup \Sigma^\infty$. We further write $v \leq w$ if $v < w$ or $v = w$.
The following three functions  play a major role throughout the paper:
\begin{align*}
m(w) & := \sum_{v \preceq w}\pi(v), \quad w \in \Sigma^\infty, \\
F(w) & := \Prob \left\{S_1 \leq w\right\} =   \lim_{n \to \infty} \sum_{v \leq w, v \in \Sigma^n}\pi(v), \quad w \in \Sigma^\infty, \\
h(t) & := \sup \{v \in \Sigma^\infty : F(v) \leq t \}, \quad t \in [0,1].
\end{align*}
Note that $m, F \in \Co$ and $h([0,1]) = \Sigma^\infty \setminus \Sigma_{b-1}^\infty$. The function $h$ is continuous at all points $t\notin F(\Sigma^\infty_0)$. For $t \in F(\Sigma^\infty_0)$, we have $h(t) = \lim_{s \downarrow t} h(s)$, and the limit $h(t-) := \lim_{s \uparrow t} h(s)$ exists.
For more details on $m, F$ and  $h$ and explicit expressions in the uniform model and for certain memoryless sources covering the Bernoulli model (with $b=2$) we refer to Section \ref{sec:prel} and Proposition \ref{prop:elem2}.

 Finally, note that these definitions extend straightforwardly to a general probabilistic source, that is, a probability distribution on $\Sigma^\infty$. Here, we only require that two independent strings are almost surely distinct, that is, $\ProbB{S_1 = v} = 0$ for all $v \in \Sigma^\infty$.

\medskip
{\bf Main results.} The main results of this work concern the asymptotic orders of mean and variance as well as limit laws for the complexity of Radix Selection for our Markov source model for all three models of ranks.

\medskip {\em Quantile-model.}  We start with the first order behaviours of the processes $Z_n, n \geq 1$ and $Y_n, n \geq 1$ defined in (\ref{def:z}) and (\ref{def:y_n}).

\begin{theorem} \label{prop:mean}
Consider Radix Selection using $b\geq 2$ buckets under the  Markov source model.
\begin{enumerate}
\item For all $v \in \Sigma^\infty$ and $\Sigma^\infty$-valued sequences $v(n), n \geq 1$, with $j(v(n), v) \to \infty$, we have, almost surely and with respect to all moments,

\begin{align*}  \frac{Z_n(v(n))}{n} \to m(v). \end{align*}
\item  For $k = k(n) \in \{1, \ldots, n\}$ with $k/n \to t \in [0,1] \setminus F(\Sigma_0^\infty)$,
we have, almost surely and with respect to all moments,
\begin{align*} \frac{Y_n(k)}{n} \to m \circ h(t). \end{align*}
\end{enumerate}
\end{theorem}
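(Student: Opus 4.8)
The plan is to extract a combinatorial identity for $Z_n$ that reduces it, up to $o(n)$ corrections, to a sum of i.i.d.\ string coincidences, and then apply the strong law of large numbers. Since $j(v,S_i)\ge l$ holds precisely when $v_1\cdots v_l\preceq S_i$, the count $\Lambda_{n,l}(v)$ is non-increasing in $l$, so $\mathbf 1_{\Lambda_{n,l}(v)>1}=1$ exactly for $0\le l\le\beta_n(v)$, where $\beta_n(v)$ denotes the second largest of $j(v,S_1),\dots,j(v,S_n)$. Summing $\sum_{l=0}^{\beta_n(v)}\mathbf 1_{j(v,S_i)\ge l}=1+\min(j(v,S_i),\beta_n(v))$ over $i$,
\begin{equation}\label{eq:Zident}
Z_n(v)=n+\sum_{i=1}^{n}\min\!\bigl(j(v,S_i),\beta_n(v)\bigr)=n+\sum_{i=1}^{n}j(v,S_i)-\Bigl(\max_{1\le i\le n}j(v,S_i)-\beta_n(v)\Bigr).
\end{equation}
For fixed $v$ the $j(v,S_i)$ are i.i.d.\ with $\Prob\{j(v,S_1)\ge l\}=\pi(v_1\cdots v_l)\le\rho^{l-1}$, where $\rho:=\max_{i,j}p_{ij}<1$; hence they have moments of all orders, $\E[j(v,S_1)]=\sum_{l\ge1}\pi(v_1\cdots v_l)=m(v)-1$, and by the strong law $n^{-1}\sum_i j(v,S_i)\to m(v)-1$ almost surely, the averages being bounded in every $L^p$ (hence uniformly integrable, together with all powers) so that this holds in all moments as well. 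A union bound gives $\max_i j(v,S_i)=O(\log n)$ a.s.\ and in every $L^p$, and the correction in \eqref{eq:Zident} is nonnegative and dominated by it; this settles (i) when $v(n)\equiv v$.

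\textbf{The perturbation $v(n)\to v$.} If $j(v(n),S_i)<j_n:=j(v(n),v)$, then $S_i$ disagrees with $v$ at a position $\le j_n$ and so $j(v(n),S_i)=j(v,S_i)$. Thus, for fixed $j^*$ and $n$ large enough that $j_n>j^*$, the sets $\{i:j(v(n),S_i)\le j^*\}$ and $\{i:j(v,S_i)\le j^*\}$ coincide with equal coincidences, while the remaining indices all satisfy $v_1\cdots v_{j^*+1}\preceq S_i$. Splitting \eqref{eq:Zident} for $v(n)$ at $j^*$, the low part converges, a.s.\ and in $L^p$, to $\E[j(v,S_1)\mathbf 1_{j(v,S_1)\le j^*}]$, which tends to $m(v)-1$ as $j^*\to\infty$. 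The high part runs over the cylinder $\{v_1\cdots v_{j^*+1}\preceq S_i\}$, of size $\sim n\,\pi(v_1\cdots v_{j^*+1})$, each coincidence there equalling $j^*+1$ plus the coincidence of the two suffixes beyond position $j^*+1$; by the Markov property the suffixes of the $S_i$ in that cylinder are i.i.d.\ with geometric tails and independent of $v(n)$. Capping the at most one index whose coincidence exceeds $\beta_n(v(n))\le\max_{i\ne i'}j(S_i,S_{i'})=O(\log n)$, the high part is $\le(j^*+2)\Lambda_{n,j^*+1}(v)+T_n$ with $\E[T_n]=O(n\,\pi(v_1\cdots v_{j^*+1}))$ and an exponential Chernoff tail; as $\pi(v_1\cdots v_{j^*+1})>0$ is independent of $n$, Borel--Cantelli yields $\limsup_n n^{-1}(\text{high part})=O\!\bigl((j^*+2)\rho^{j^*}\bigr)$ a.s., with the same $L^p$ bound. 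Letting $j^*\to\infty$ proves (i).

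\textbf{The quantile statement.} By the Glivenko--Cantelli theorem the empirical distribution function converges uniformly to $F$ on the compact ordered space $(\Sigma^\infty,\mathcal T_\infty)$, so $F(S_{(k(n))})\to t$; since $t\notin F(\Sigma_0^\infty)$ the function $h$ is continuous at $t$, and, a.s.\ no data string lying in the countable set $\Sigma_{b-1}^\infty$, one has $S_{(k(n))}=h(F(S_{(k(n))}))\to h(t)$, i.e.\ $j(S_{(k(n))},h(t))\to\infty$ a.s. Writing $i_0$ for the index with $S_{i_0}=S_{(k)}$ (so $j(S_{(k)},S_{i_0})=\infty$), identity \eqref{eq:Zident} gives $Y_n(k)=Z_n(S_{(k)})=n+M_n+\sum_{i\ne i_0}j(S_{(k)},S_i)$ with $M_n:=\max_{i\ne i_0}j(S_{(k)},S_i)$, which is $O(\log n)$ a.s.; it therefore suffices to show $n^{-1}\sum_{i\ne i_0}j(S_{(k)},S_i)\to m(h(t))-1$. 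Since $n^{-1}\sum_{i\ne i_0}j(h(t),S_i)=n^{-1}\sum_i j(h(t),S_i)-n^{-1}j(h(t),S_{i_0})\to m(h(t))-1$ by the strong law (the last term is $O(\log n)$), it remains to control the error, which by the same deterministic observations as above ($j(S_{(k)},S_i)\ne j(h(t),S_i)$ forces $j(h(t),S_i)\ge j_n$, or, for $i\ne i_0$, $j(S_{i_0},S_i)>j_n$) satisfies, for $n$ large,
\begin{equation*}
\sum_{i\ne i_0}\bigl|j(S_{(k)},S_i)-j(h(t),S_i)\bigr|\ \le\ \sum_{i}\bigl(j(h(t),S_i)-j^*\bigr)^+\ +\ \max_{1\le l\le n}\ \sum_{i\ne l}\bigl(j(S_l,S_i)-j^*\bigr)^+ .
\end{equation*}
The first term tends a.s.\ and in $L^p$ to $\E[(j(h(t),S_1)-j^*)^+]=O(\rho^{j^*})$; for the second, a union bound over $l$ together with a Bernstein-type estimate — each summand has conditional mean $\le\rho^{j^*}/(1-\rho)$ uniformly in $S_l$, since $\Prob\{j(S_l,S_i)\ge l'\mid S_l\}\le\rho^{l'-1}$, and sub-exponential tails — shows $\limsup_n n^{-1}\max_l\sum_{i\ne l}(j(S_l,S_i)-j^*)^+=O(\rho^{j^*})$ a.s., with matching $L^p$ bounds. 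Letting $j^*\to\infty$ gives (ii).

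\textbf{Main obstacle.} The crux throughout is the uniform control of the perturbation, $Z_n(v(n))-Z_n(v)=o(n)$: in (i) because $v(n)$ may be adversarial, in particular shadowing one data string to arbitrary depth — which is exactly what the cap at $\beta_n(v(n))=O(\log n)$ in \eqref{eq:Zident} neutralizes — and in (ii) because $v(n)=S_{(k(n))}$ is itself one of the data strings, so the estimate for the ``high'' part must be made uniform over which string is distinguished, forcing the union bound and Bernstein estimate above. In both cases the bulk discrepancy is tamed by the geometric decay $\pi(v_1\cdots v_l)\le\rho^{l-1}$ combined with Borel--Cantelli, which is available precisely because each fixed-depth cylinder has probability bounded away from $0$ uniformly in $n$.
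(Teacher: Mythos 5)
Your proof is correct but takes a genuinely different route from the paper's. For part~(i), the paper bounds $|Z_n(v(n)) - n - \sum_i j(v(n),S_i)|$ by $\max_i j(v(n),S_i)=O(\log n)$ (Proposition~\ref{prop:weaker}~$i)$) and then invokes a strong law of large numbers for row-wise independent triangular arrays \cite{HMT89} to handle the moving target $v(n)$; you instead derive the exact identity $Z_n(v)=n+\sum_i\min(j(v,S_i),\beta_n(v))$ and dispose of the triangular structure by hand, splitting at a fixed depth $j^*$: once $j(v(n),v)>j^*$ the low part is literally that of $v$, and the high part is a.s.\ $O\bigl((j^*+2)\rho^{j^*}\bigr)n$ by Chernoff plus Borel--Cantelli, with $j^*\to\infty$ afterwards. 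For part~(ii), the paper proves $S_{(k)}\to h(t)$ via binomial large deviations (Lemma~\ref{lem:asyS}) and then shows a.s.\ convergence by a uniform concentration of $\Lambda_{n,\ell}(S_{(k)})_+$ over all cylinders in a prefix set $A_{K,L}$, deferring moment convergence to Theorem~\ref{thm:mainsup}; you obtain $S_{(k)}\to h(t)$ from Glivenko--Cantelli, write $Y_n(k)=n+M_n+\sum_{i\ne i_0}j(S_{(k)},S_i)$ from the same identity, and control the replacement of $S_{(k)}$ by $h(t)$ with a Bernstein estimate made uniform over which string is distinguished by a union bound over $l$. Your approach buys self-containedness (no triangular-array SLLN, no forward reference to Theorem~\ref{thm:mainsup}) and makes a single combinatorial identity carry the whole theorem; the paper's is shorter in situ because it reuses Lemma~\ref{lem:asyS}, the trie-height tail bound, and Theorem~\ref{thm:mainsup}, all of which it needs elsewhere. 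Two small points to tighten in a full write-up: the Glivenko--Cantelli step should be phrased via $U_i=F(S_i)$, which are i.i.d.\ uniform on $[0,1]$ because $F$ is the continuous distribution function of $S_1$, so that $F(S_{(k)})=U_{(k)}\to t$ and then $S_{(k)}=h(F(S_{(k)}))\to h(t)$ using $S_{(k)}\notin\Sigma_{b-1}^\infty$ a.s.; and the phrase ``with matching $L^p$ bounds'' should be justified by integrating the Chernoff/Bernstein tails rather than inferred from the a.s.\ $\limsup$, which alone does not control moments.
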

 The first order behaviour of $Y_n(k)$ for $k/n \to t \in F(\Sigma^\infty_0)$ is studied in Proposition
\ref{boundarycase}. Both statements in the  proposition remain valid for a general probabilistic source under weak conditions.
See Corollary \ref{gensource}.

\begin{figure}[tt]
\subfigure[$p_{00}$=0.2, $p_{10}$=0.5\label{subfig_a}]{
\mbox{
 \includegraphics[trim=0cm 2.5cm 0cm 1cm, clip=true,width=0.49\textwidth]{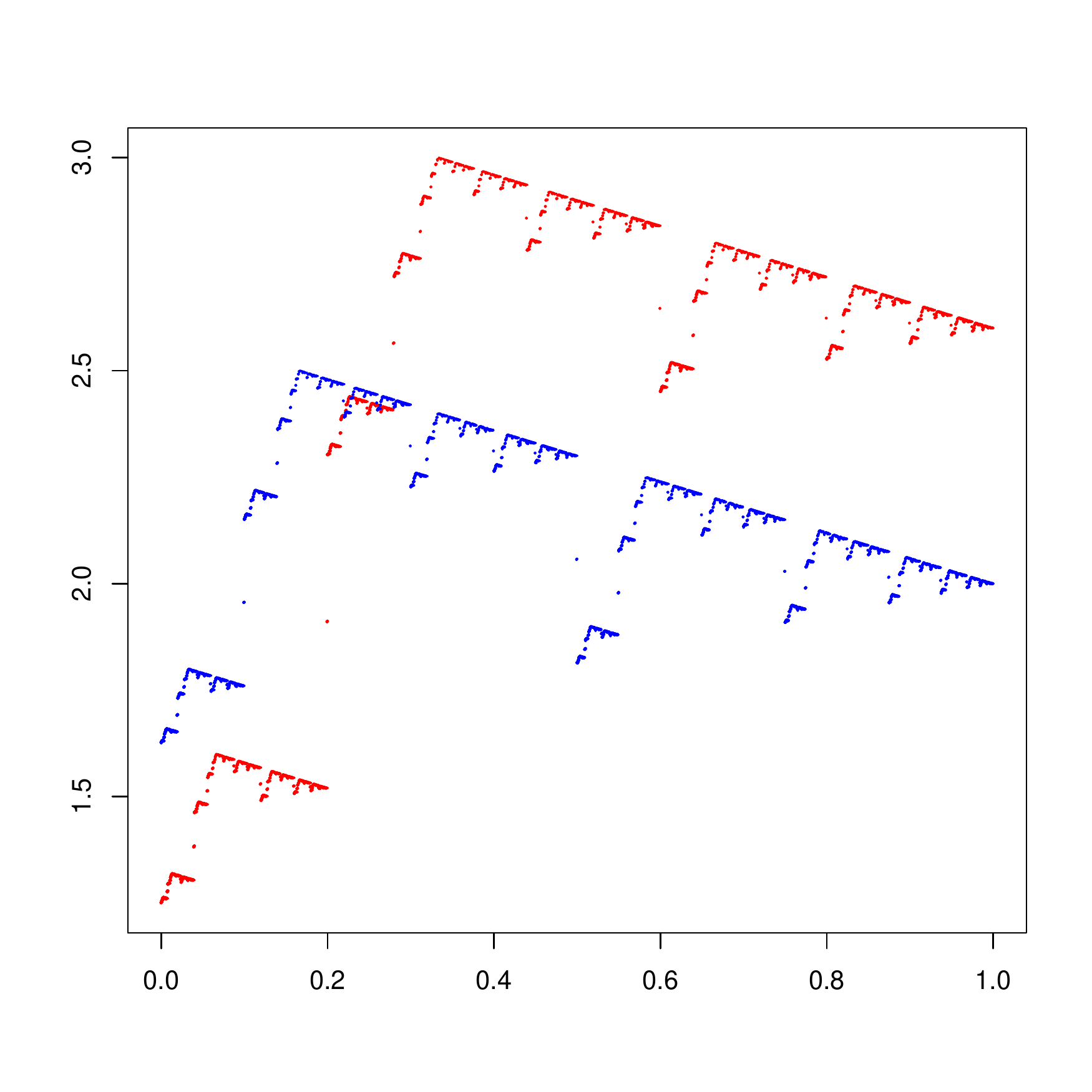}
}
}
\subfigure[$p_{00}$=0.6, $p_{10}$=0.4\label{subfig_b}]{
\mbox{
\includegraphics[trim=0cm 2.5cm 0cm 1cm, clip=true,width=0.49\textwidth]{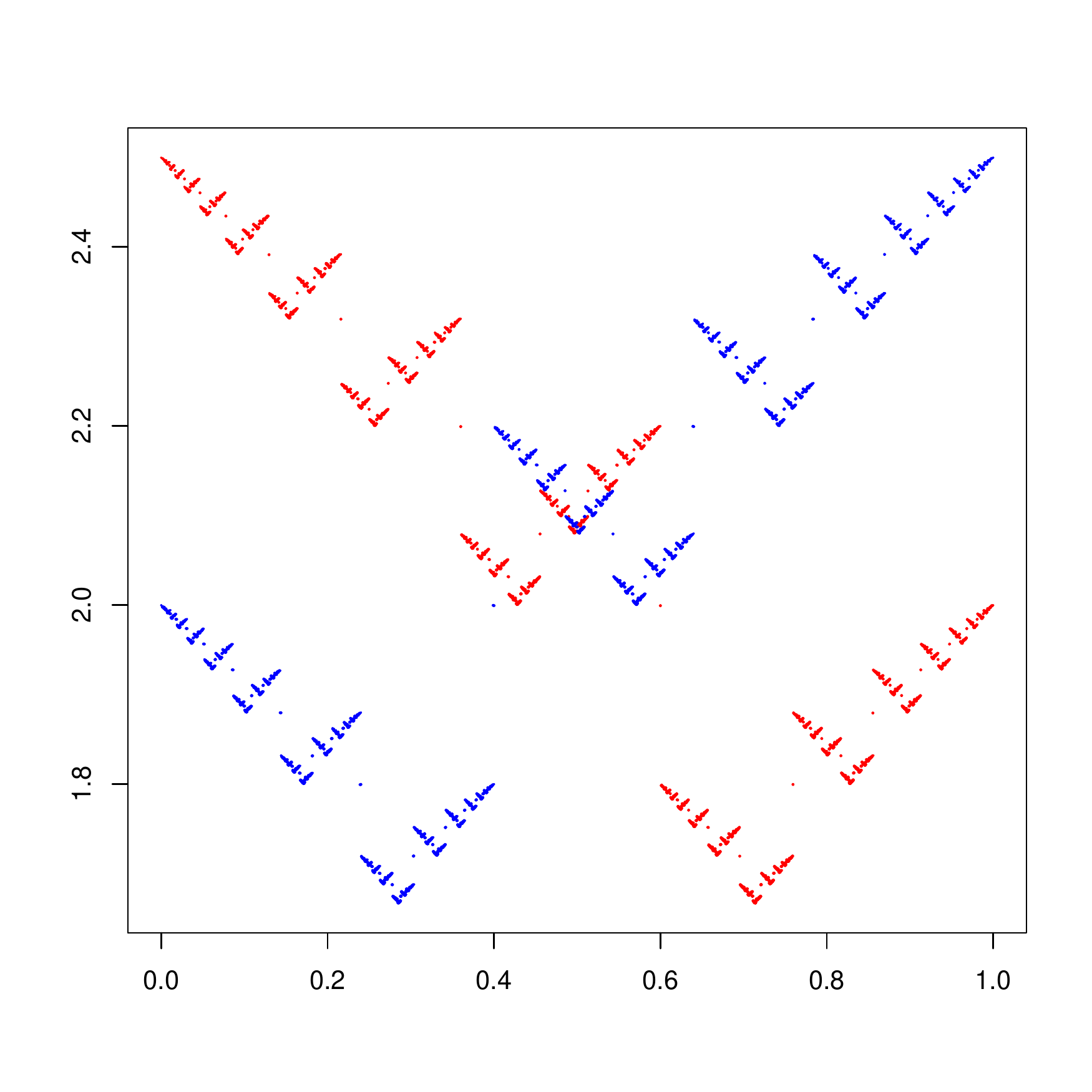}
}
}
\subfigure[$p_{00}$=0.6, $p_{10}$=0.7\label{subfig_c}]{
 \includegraphics[trim=0cm 2.5cm 0cm 1cm, clip=true,width=0.49\textwidth]{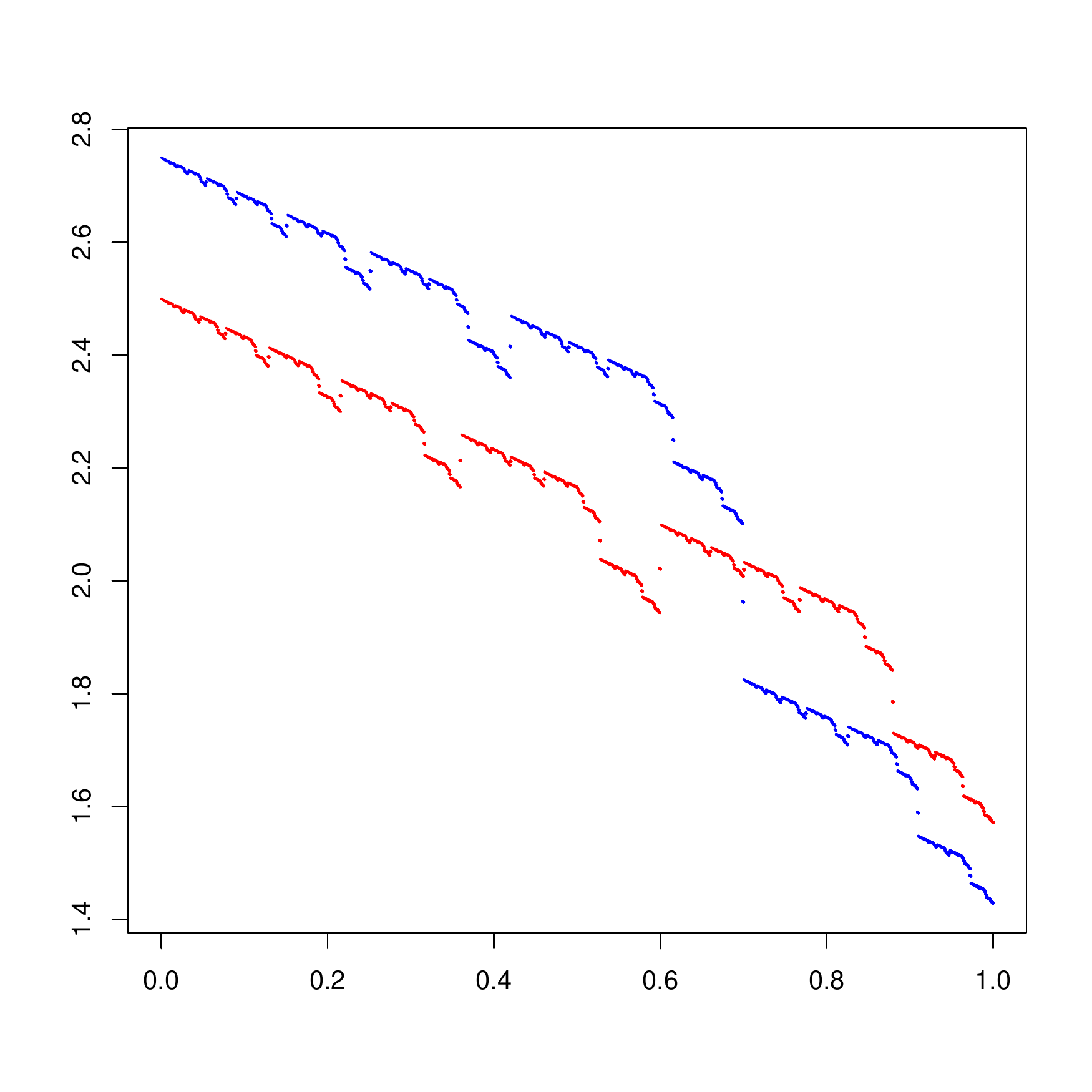}
}
\subfigure[$p_{00}$=0.3, $p_{10}$=0.7\label{subfig_d}]{
\hspace{0.25cm}
\includegraphics[trim=0cm 2.5cm 0cm 1cm, clip=true,width=0.49\textwidth]{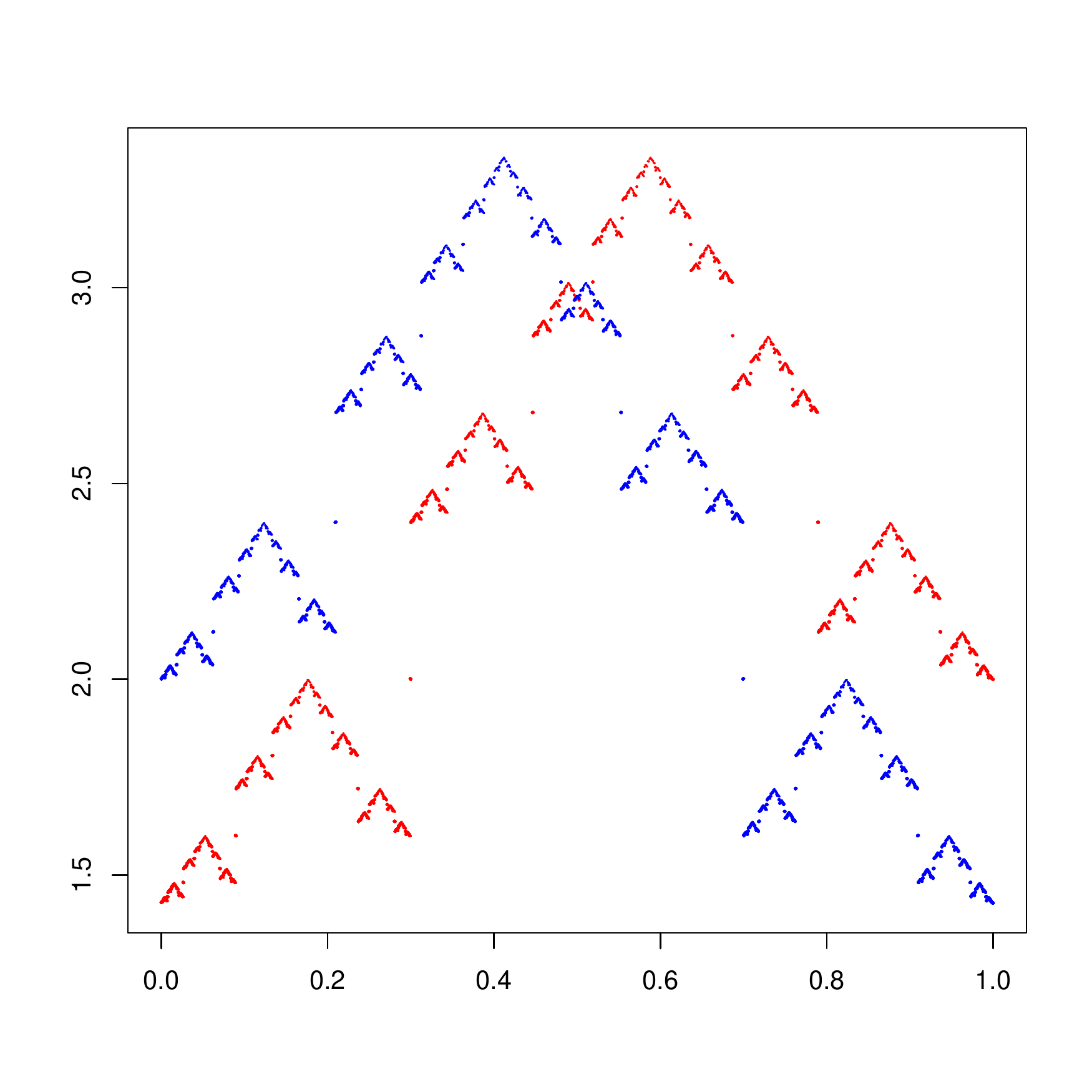}
}
\caption{Plots of $m_0 \circ h_0$ (red) and $m_1 \circ h_1$ (blue) for different Markov sources ($b=2)$.
}
\label{fig_m_0_m1}
\end{figure}

For the process $Z(v)$, we can show a functional limit theorem.

\begin{theorem} \label{thm_simple2}
 Let $b\geq 2$ and consider Radix Selection with a Markov source.
 In distribution, in $\Co$,
 \begin{align*} 
  \left( \frac{Z_n(v)- m(v) n}{\sqrt n} \right)_{v \in \Sigma^\infty}
   \to H.
\end{align*}
Here,  $H$ is a centered $\Co$-valued Gaussian process with covariance function
$$\E[H(v)H(w)] = \emph{Cov}(j(v, S_1), j(w,S_1)), \quad v,w \in \Sigma^\infty.$$
In the uniform model, we have
\begin{align*} 
\E[H(v) H(w)]=\frac b {(b-1)^2}- \frac{b+1}{(b-1)^2} b^{-j(v,w)},\quad v,w\in \Sigma^\infty.
\end{align*}
\end{theorem}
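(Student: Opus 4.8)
The plan is to reduce the statement to a functional CLT for a truncated sum of i.i.d.\ continuous random functions, to get finite‑dimensional convergence from a triangular‑array CLT, and to obtain tightness in $\Co$ by exploiting the ultrametric tree structure of $\Sigma^\infty$.

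\emph{Reduction to an i.i.d.\ sum.} Counting, for each $i$ and each level $k$, the pairs with $j(v,S_i)\ge k$ and $\Lambda_{n,k}(v)\ge 2$ gives the identity $Z_n(v)=n+\sum_{i=1}^n \min(j(v,S_i),J_2(v))$, where $J_2(v)$ is the second largest of $j(v,S_1),\dots,j(v,S_n)$. A first moment bound gives $\Prob(\sup_v J_2(v)>a)=\Prob(\max_{i<\ell}j(S_i,S_\ell)>a)\le\binom n2\sum_{w\in\Sigma^a}\pi(w)^2\le\binom n2\max_{w\in\Sigma^a}\pi(w)$, which decays geometrically in $a$ since $\pi(w)\le (\max_i\mu_i)(\max_{ij}p_{ij})^{|w|-1}$. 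Fix $a_n\asymp\log n$ so that this probability tends to $0$ and simultaneously $n\sum_{k>a_n}\pi(v_1\cdots v_k)=o(\sqrt n)$ uniformly in $v$. On $\{\sup_v J_2(v)\le a_n\}$ the two truncations (at $a_n$, at $J_2(v)$) differ only in the single term from the nearest string, by at most $a_n$, so $\|Z_n(\cdot)-n-\sum_{i=1}^n(j(\cdot,S_i)\wedge a_n)\|\le a_n$; together with $n(m(v)-1)-ng_n(v)=n\sum_{k>a_n}\pi(v_1\cdots v_k)=o(\sqrt n)$, where $g_n(v):=\E[j(v,S_1)\wedge a_n]=\sum_{k=1}^{a_n}\pi(v_1\cdots v_k)$, this yields
$$\Big\|\tfrac{Z_n(\cdot)-m(\cdot)n}{\sqrt n}-W_n\Big\|\xrightarrow{\;\Prob\;}0,\qquad W_n(v):=\tfrac1{\sqrt n}\sum_{i=1}^n\big(j(v,S_i)\wedge a_n-g_n(v)\big).$$
Thus it suffices to show $W_n\to H$ in $\Co$. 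Here $W_n=\tfrac1{\sqrt n}\sum_{i=1}^n X_i^{(n)}$ with $X_i^{(n)}(v)=j(v,S_i)\wedge a_n-g_n(v)$ i.i.d.\ centred, $\mathcal T_\infty$-continuous, and, crucially, $\|X_i^{(n)}\|\le 2a_n=O(\log n)$.

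\emph{Finite-dimensional convergence.} For fixed $v^{(1)},\dots,v^{(r)}$ the vector $(W_n(v^{(j)}))_j$ is a normalised i.i.d.\ sum with entries bounded by $2a_n=o(\sqrt n)$, so Lindeberg holds trivially; and $\CV(j(v,S_1)\wedge a_n,\,j(w,S_1)\wedge a_n)\to\CV(j(v,S_1),j(w,S_1))$ by dominated convergence, using that $j(v,S_1)$ has exponential tails (hence all moments). So the finite-dimensional distributions converge to those of the centred Gaussian process with the stated covariance.

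\emph{Tightness (the main obstacle).} Two structural facts are used. (a) If $j(v,w)\ge a_n$ then $j(v,S_i)\wedge a_n=j(w,S_i)\wedge a_n$ for all $i$, so $W_n$ is constant on every ball of radius $a^{a_n}$; writing $v|_k$ for the length-$k$ prefix of $v$ padded with zeros, $W_n(v)=W_n(v|_{a_n})=W_n(0^\infty)+\sum_{k=0}^{a_n-1}(W_n(v|_{k+1})-W_n(v|_k))$. (b) Splitting the strings $S_i$ according to whether they lie on the $v$-side, on the $w$-side, or branch off before the split point, and using that the excess coincidence of a string sharing the common prefix of length $j(v,w)+1$ has geometric tails, one gets $\V(W_n(v)-W_n(w))=\V(j(v,S_1)\wedge a_n-j(w,S_1)\wedge a_n)\le C\bar\rho^{\,j(v,w)}$ with $\bar\rho=\max_{ij}p_{ij}<1$ and $C$ depending only on the source. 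Now $W_n(v|_{k+1})-W_n(v|_k)$ is a normalised i.i.d.\ sum with summands bounded by $2a_n$ and per-summand variance $\le C\bar\rho^k$; by Bernstein's inequality (the sub‑exponential term being of order $a_n/\sqrt n\to0$, negligible at the scale $\bar\rho^{k/2}$) and a union bound over the at most $b^{k+1}$ differences of this shape at level $k$, followed by a union over $0\le k\le a_n$, there is $C'$ and events of probability $\to1$ on which $\max_v|W_n(v|_{k+1})-W_n(v|_k)|\le C'\sqrt{k+1}\,\bar\rho^{k/2}$ for all $k$. Summing the geometric series gives $\|W_n\|\le|W_n(0^\infty)|+C''$ and, more importantly, the deterministic modulus-of-continuity bound $\sup_{j(v,w)\ge\ell}|W_n(v)-W_n(w)|\le 2\sum_{k\ge\ell}C'\sqrt{k+1}\,\bar\rho^{k/2}\to0$ as $\ell\to\infty$. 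Since $\Sigma^\infty$ is compact, this is exactly the equicontinuity condition for tightness in $C(\Sigma^\infty)$.

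\emph{Conclusion and the uniform model.} Finite-dimensional convergence and tightness give $W_n\to H$ in $\Co$, hence $(Z_n(v)-m(v)n)/\sqrt n\to H$; the limit is a genuine $\Co$-valued Gaussian process, since $\E(H(v)-H(w))^2\le C\bar\rho^{\,j(v,w)}$ and the same chaining over the compact ultrametric $\Sigma^\infty$ gives a.s.\ continuity. In the uniform model, $\Prob(j(v,S_1)\ge k)=b^{-k}$ gives $\E[j(v,S_1)]=1/(b-1)$ and $\V(j(v,S_1))=b/(b-1)^2$ (hence $\E[H(v)^2]=b/(b-1)^2$). For $v\ne w$ with $j(v,w)=\ell$, conditioning on whether $S_1$ matches the common prefix of length $\ell$ (probability $b^{-\ell}$) and then on the symbol in position $\ell+1$ — which with probability $1/b$ each continues on the $v$-branch or the $w$-branch, forcing the other coincidence to be exactly $\ell$ — reduces $\E[j(v,S_1)j(w,S_1)]$ to truncated geometric moments; collecting terms gives $\CV(j(v,S_1),j(w,S_1))=\tfrac{b}{(b-1)^2}-\tfrac{b+1}{(b-1)^2}b^{-\ell}$, consistent at $\ell=\infty$ with the diagonal value.

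The hard part is the tightness: controlling the equicontinuity of the fluctuations despite the unbounded spike of $j(\cdot,S_i)$ near $S_i$ (resolved by the $\asymp\log n$ truncation, justified uniformly via $\sup_vJ_2(v)=O(\log n)$) and despite having $b^k$ branch points to control at depth $k$ (resolved by Bernstein plus the geometric decay $\bar\rho^k$ of the increment variances and the fact that $a_n/\sqrt n\to0$ kills the heavy part of Bernstein's bound).
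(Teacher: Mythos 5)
Your approach is genuinely different from the paper's. The paper establishes Theorem~\ref{thm_simple2} via the contraction method: a recursive decomposition of $Z_n$ according to the first bucket split, an almost-sure construction of coupled tries and a fixed-point characterisation \eqref{fpe_rn_sys} of the limit $H$, and convergence in $L_3$ of the normalised discrete processes to $H$. You instead reduce $Z_n(v) = n + \sum_i\min(j(v,S_i),J_2(v))$ to a truncated i.i.d.\ sum, prove finite-dimensional convergence by Lindeberg, and attempt tightness by chaining on the ultrametric tree. The reduction step and the finite-dimensional CLT are sound and essentially reproduce Proposition~\ref{prop:weaker}~$i)$ and~$ii)$ by a more elementary route.

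However, the tightness argument has a genuine gap. You assert that the sub-exponential term of Bernstein is ``of order $a_n/\sqrt n\to 0$, negligible at the scale $\bar\rho^{k/2}$''. In fact the relevant ratio is
\begin{align*}
\frac{M t_k/(3\sqrt n)}{\sigma_k^2} \;\asymp\; \frac{a_n\sqrt{k+1}\,\bar\rho^{k/2}/\sqrt n}{\bar\rho^{k}} \;=\; \frac{a_n\sqrt{k+1}}{\sqrt n\,\bar\rho^{k/2}},
\end{align*}
which contains the factor $\bar\rho^{-k/2}$ and therefore \emph{diverges} once $k$ exceeds roughly $\log n/\log(1/\bar\rho)$; there the exponent in Bernstein's bound collapses to $\asymp \sqrt k\,\bar\rho^{k/2}\sqrt n/a_n$, which is too small to beat the union over $b^{k+1}$ branch points. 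On the other hand, your first constraint $\binom n2\sum_{w\in\Sigma^{a_n}}\pi(w)^2 \to 0$ forces $a_n \gtrsim 2\log n/\log(1/\lambda_2)$ with $\lambda_2 = \lim_k \bigl(\sum_{w\in\Sigma^k}\pi(w)^2\bigr)^{1/k}$, and for any source $\lambda_2 > p_{\max}^2 = \bar\rho^2$ (e.g.\ memoryless: $\sum p_i^2 > \max p_i^2$), so $a_n > \log n/\log(1/\bar\rho)$. Hence the range of levels $k$ you must chain over necessarily contains a non-degenerate interval where the Bernstein--union bound fails; the two requirements on $a_n$ are incompatible with the chaining as written. (Replacing Bernstein by Rosenthal or a $2p$-th moment plus Kolmogorov's criterion runs into the same threshold $k\lesssim\log n/\log(1/\bar\rho)$.)

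The gap is fixable: for instance, chain only up to a level $a_n' = (1-\varepsilon)\log n/\log(1/\bar\rho)$, and handle the residual $W_n - W_n'$ (where $W_n'$ is the sum truncated at $a_n'$) by a separate uniform estimate exploiting that its per-summand variance is $O(\bar\rho^{a_n'}) = O(n^{-(1-\varepsilon)})$ while the index set effectively has only polynomially many elements. As stated, though, the equicontinuity bound --- the step you yourself flag as the crux --- is not established.
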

We refer the reader to Proposition \ref{prop:weaker} $iii)$ for a more explicit formula for the covariance function of $H$ for a general Markov source.

In a specific instance of a memoryless source covering both the uniform model and the Bernoulli model (with $b=2$), we can state similar results for the process $Y_n$. Here and subsequently, we always set $Y_n(n+1) :=Y_n(n)$.

\begin{theorem} \label{thm_simple}
Consider Radix Selection using $b\geq 2$ buckets under a memoryless source with \begin{align} \label{goodcase} \mu_r = p_r = \frac{1-\beta}{1-\beta^b} \beta^r, \quad r \in \Sigma, \beta > 0, \end{align}
with the convention that $\mu_r = p_r = 1 / b$ for all $r \in \Sigma$ if $\beta = 1$.
With \begin{align} \label{def:ga} \gamma = \frac{1-\beta^b}{\beta - \beta^b}, \quad \alpha = (\beta -1)\gamma, \end{align}
and the convention that $\gamma = b / (b-1)$ for $\beta = 1$, we have $m\circ h(t) = \alpha t + \gamma, t \in [0,1]$, and, as $n\to\infty$, in distribution, in $\Do$:
\begin{align*} 
  \left( \frac{Y_n(\lfloor t n \rfloor  +1) - \left(\alpha t  + \gamma \right)n}{\sqrt n} \right)_{t \in [0,1]}
   \to G,
\end{align*}
where $G$ is a centered Gaussian process with covariance function given, for $b=2$, by
\begin{align}\label{CV_G_pf}
\EE{G(s)G(t)}=-\pi(w) +\sum_{k=1}^{j(h(t),h(s))}\frac{\pi(w_1\ldots w_k)}{1-p_{w_k}}, \quad w = h(t)_1 \ldots h(t)_{j(h(t), h(s))} .
\end{align}
 Display \eqref{exp_var} gives an explicit formula for the variance for any $b \geq 2$. Almost surely, $G$ is continuous at all points $t \in F(\Sigma_0^\infty)$ and discontinuous at all points $t \notin F(\Sigma_0^\infty)$. We have $G = H \circ h$ with $H$ as in Theorem \ref{thm_simple2} in the respective model if and only if $\beta = 1$.
\end{theorem}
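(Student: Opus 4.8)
\medskip
\noindent\textbf{Proof proposal.} The plan is to reduce everything to the $\Co$‑valued limit theorem for $Z_n$ (Theorem~\ref{thm_simple2}) together with classical empirical‑process theory, via the identity $Y_n(\ell)=Z_n(S_{(\ell)})$. Writing $\ell=\ell_n(t):=\lfloor tn\rfloor+1$, split
\begin{align*}
\frac{Y_n(\ell)-(\alpha t+\gamma)n}{\sqrt n}
=\underbrace{\frac{Z_n(S_{(\ell)})-m(S_{(\ell)})\,n}{\sqrt n}}_{=:\,\Xi_n(t)}
\;+\;\underbrace{\sqrt n\bigl(m(S_{(\ell)})-m\circ h(t)\bigr)}_{=:\,\Theta_n(t)} .
\end{align*}
A preliminary, purely deterministic, step is the affine identity $m\circ h(t)=\alpha t+\gamma$ on $[0,1]$ for the source~\eqref{goodcase}: this follows by substituting the closed forms of $m$, $F$ and $h$ for geometric memoryless sources collected in Section~\ref{sec:prel} (Proposition~\ref{prop:elem2}), and it is precisely for this one‑parameter family that $m\circ h$ is affine.

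First I would dispose of $\Theta_n$. Almost surely $S_{(\ell)}\notin\Sigma_0^\infty\cup\Sigma_{b-1}^\infty$, hence $h(F(S_{(\ell)}))=S_{(\ell)}$, and by continuity of $m$ and the affine identity $m(S_{(\ell)})=m\circ h(F(S_{(\ell)}))=\alpha F(S_{(\ell)})+\gamma$; therefore $\Theta_n(t)=\alpha\sqrt n\,(F(S_{(\ell)})-t)$. Since $F$ is continuous, the probability integral transform makes $U_i:=F(S_i)$ i.i.d.\ uniform on $[0,1]$, and monotonicity of $F$ for $<$ gives $F(S_{(\ell)})=U_{(\ell)}$; thus, up to the harmless index shift $\lfloor tn\rfloor+1$ versus $\lceil tn\rceil$, $\Theta_n$ is $\alpha$ times the classical uniform empirical quantile process, so $\Theta_n\Rightarrow-\alpha\mathbb{B}$ in $\Do$ with $\mathbb{B}$ a standard Brownian bridge. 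More precisely, the Bahadur representation of the uniform quantile process by minus the uniform empirical process gives $\Theta_n(t)=-\tfrac{\alpha}{\sqrt n}\sum_{i=1}^n(\mathbf 1_{S_i\le h(t)}-t)+o_P(1)$ uniformly in~$t$.

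The heart of the matter is $\Xi_n$ and the \emph{joint} limit of $(\Xi_n,\Theta_n)$. Setting $W_n(v):=(Z_n(v)-m(v)n)/\sqrt n$, Theorem~\ref{thm_simple2} gives $W_n\Rightarrow H$ in $\Co$. Separately, $t\mapsto S_{(\ell_n(t))}\to h$ in the Skorokhod space $D([0,1],\Sigma^\infty)$ in probability: this is a Glivenko--Cantelli-type statement following from monotonicity of the empirical quantile for $<$ together with the pointwise a.s.\ limit $S_{(\ell_n(t))}=h(U_{(\ell_n(t))})\to h(t)$ at the continuity points $t\notin F(\Sigma_0^\infty)$ of~$h$. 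As the limit $h$ is deterministic this upgrades to $(W_n,S_{(\ell_n(\cdot))})\Rightarrow(H,h)$, and the composition map $(\phi,g)\mapsto\phi\circ g$ is continuous at $(H,h)$ because $H$ is uniformly continuous on the compact $\Sigma^\infty$; hence $\Xi_n\Rightarrow H\circ h$ in $\Do$, which also yields its tightness. One subtlety must be absorbed here: $S_{(\ell)}$ always coincides with one of the data, so $Z_n(S_{(\ell)})$ differs from the "generic'' $Z_n(h(t))$ by the contribution of that single coincident string, controlled by $\#\{k:\Lambda_{n,k}(S_{(\ell)})\ge 2\}=o_P(\sqrt n)$ uniformly in $t$ (as already needed for Theorem~\ref{thm_simple2}), which does not affect the limit. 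For the finite‑dimensional distributions of $(\Xi_n,\Theta_n)$, observe that for each fixed $t$ (a.s.\ distinct from all $F(S_i)$) one has $\Xi_n(t)=\tfrac1{\sqrt n}\sum_{i=1}^n\bigl(j(h(t),S_i)-\E[j(h(t),S_1)]\bigr)+o_P(1)$, so $(\Xi_n(s_1),\dots,\Xi_n(s_p),\Theta_n(t_1),\dots,\Theta_n(t_q))$ is asymptotically a normalized sum of i.i.d.\ vectors and a multivariate Lindeberg central limit theorem applies; together with the marginal tightness this gives $(\Xi_n,\Theta_n)\Rightarrow(H\circ h,\,-\alpha\mathbb{B})$ in $\Do\times\Do$, a jointly Gaussian pair coupled through the common data. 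Adding the coordinates, $G=H\circ h-\alpha\mathbb{B}$.

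It remains to compute and interpret the covariance of $G$. From the above,
\begin{align*}
\CV(G(s),G(t))=\CV\!\bigl(j(h(s),S_1)-\alpha\mathbf 1_{S_1\le h(s)},\ j(h(t),S_1)-\alpha\mathbf 1_{S_1\le h(t)}\bigr),
\end{align*}
and for a memoryless source each of the four joint expectations is explicit because prefix events of $S_1$ factorize over symbols; carrying out the bookkeeping collapses this, for $b=2$, to \eqref{CV_G_pf}, and yields the variance formula \eqref{exp_var} for general $b$. The continuity statement then follows by locating the jumps of $G$: since $\mathbb{B}$ is continuous they coincide with the jumps of $H\circ h$, i.e.\ with the jumps of $h$, which sit exactly at $F(\Sigma_0^\infty)$ and where $\V(G(t)-G(t-))>0$. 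Finally, $G=H\circ h$ in distribution means precisely that the $\alpha$‑dependent terms in the covariance vanish identically in $(s,t)$; specializing to $s=t$ this becomes $2\alpha\,\CV(j(h(t),S_1),\mathbf 1_{S_1\le h(t)})=\alpha^2\,t(1-t)$ for all $t\in[0,1]$, and for $\alpha\neq 0$ the left‑hand side is a self‑affine function of $t$ and cannot equal the polynomial $\alpha^2 t(1-t)$, forcing $\alpha=0$, i.e.\ $\beta=1$; conversely for $\beta=1$ one has $\alpha=0$, $\gamma=b/(b-1)$, $m\circ h\equiv\gamma$, and the covariance reduces to $\E[H(h(s))H(h(t))]$ with $H$ the uniform‑model process of Theorem~\ref{thm_simple2} (one checks the two $b=2$ formulas agree).

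\medskip
The step I expect to be the main obstacle is the treatment of $\Xi_n$: transferring the $\Co$‑limit of Theorem~\ref{thm_simple2} from a fixed query string to the data‑dependent and discontinuous quantile process $t\mapsto S_{(\ell_n(t))}$, which requires simultaneously justifying the composition in the $J_1$ topology when the outer index set carries the (dense) family of latent jump locations $F(\Sigma_0^\infty)$ and controlling the self‑coincidence correction uniformly in $t$. The analysis of $\Theta_n$ is classical empirical‑process theory, and the remaining parts are computations special to the family~\eqref{goodcase}.
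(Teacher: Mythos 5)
Your route is genuinely different from the paper's, and it does work — in fact it gives a clean and informative description of the limit process. The paper does not split $\mathcal Y_n$ into $\Xi_n+\Theta_n$ and analyse the two pieces separately; for $\beta\neq 1$ it writes a \emph{fresh} recursive decomposition of $\mathcal Y_n$ with additive term $(\gamma I^n_r-\alpha F^n_{r-1}-(\gamma-1)n)/\sqrt n$, builds a new $\Co$-valued Gaussian process $\tilde H$ (distinct from $H$ everywhere except at $00\ldots$ and $11\ldots$) by a contraction/coupling argument, and identifies $G=\tilde H\circ h$, aligning $J_1$ discontinuities via explicit time changes constructed from the empirical trie. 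By contrast you re-use the $\Co$-limit $H$ for $Z_n$ together with classical uniform quantile-process theory, arriving at $G=H\circ h-\alpha\mathbb B$ with $\mathbb B$ the Brownian bridge of $F(S_i)$, coupled to $H\circ h$ through the common data. One can check that these agree: writing $D:=(H-\tilde H)/\alpha$, the $\tilde H$- and $H$-recursions give $D(w)=\sqrt{p_{w_1}}D^{w_1}(\bar w)+F(\bar w)N_{w_1}+\sum_{k<w_1}N_k$, and decomposing the empirical process of the $U_i=F(S_i)$ over the first symbol shows that $\mathbb B\circ F$ satisfies precisely the same system of fixed-point equations, so $D\circ h\stackrel d=\mathbb B$ (jointly). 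Thus your decomposition is an interpretable reformulation of the paper's $\tilde H\circ h$, and it also gives a slicker proof of the ``iff $\beta=1$'' statement (compare variances, e.g.\ $\V(H\circ h(p_0))=2/\beta$ versus $\V(G(p_0))=\beta+1/\beta$ when $b=2$), which the paper only establishes indirectly. What your approach does \emph{not} let you bypass is the technical heart of the matter, which you flag yourself: the $J_1$-convergence $t\mapsto S_{(\lfloor tn\rfloor+1)}\to h$ in $D([0,1],\Sigma^\infty)$. Calling this a ``Glivenko--Cantelli-type statement'' understates it — $h$ has a dense set of jumps, so uniform convergence of the empirical quantile to the identity does not transfer through $h$, and one must build strictly increasing time changes adapted to the random trie exactly as in the proof of Proposition~\ref{prop:nat}. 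That lemma (or an equivalent) is indispensable for your $\Xi_n$ step. You would also need to be explicit that joint tightness of $(\Xi_n,\Theta_n)$ in $\Do\times\Do$ plus finite-dimensional convergence on a co-countable set of $t$'s, followed by continuity of addition at a point whose second coordinate is continuous, is what licenses passing to $\Xi_n+\Theta_n$; this is standard but worth recording since $J_1$ addition is not continuous in general. Subject to fleshing out those two points, the proposal is a valid and arguably more transparent alternative to the paper's argument.
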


For an arbitrary initial distribution $\mu$ and transition probabilities as in \eqref{goodcase}, process convergence remains true on suitable subintervals of $[0,1]$. However, for a general Markov source with transition probabilities not covered by \eqref{goodcase},   process convergence for $Y_n(\lfloor tn \rfloor +1), n \geq 1$ under a scaling as in the theorem does not hold on any interval $[a,b] \subseteq [0,1]$. See Corollary \ref{cor:notight}.

\medskip {\em Grand averages.}
In the uniform model Mahmoud et al.~\cite{mafljare00} showed the following results using techniques from analytics combinatorics. Let $I_n$ be uniformly distributed on $\{1, \ldots, n\}$ and independent of $S_1, \ldots, S_n$. Then,
$$\EE{Y_n(I_n)} = \frac{b}{b-1} n + O(\log n), \quad \text{Var}(Y_n(I_n)) = \frac{b}{(b-1)^2} n + O(\log^2 n),$$
and, with convergence in distribution,
$$\frac{Y_n(I_n) - \frac{b}{b-1} n }{\sqrt{bn} / (b-1)} \to \mathcal N,$$
where $\mathcal N$ has the standard normal distribution.
Our Theorems \ref{prop:mean} -- \ref{thm_simple} cover first order expansions for mean and variance as well as the central limit theorem. 

For general Markov sources other than uniform, we find that  the limit distribution is no longer normal, and the complexity is less concentrated. Setting $k = \lfloor tn \rfloor$ and  integrating over $t \in [0,1]$, the law of large numbers  in Theorem \ref{prop:mean} $ii)$ implies the following result.

\begin{korollar} \label{thm:maingrand}
Let $b\geq 2$ and consider Radix Selection with a general Markov source. Choose $I_n$ uniformly on $\{1, \ldots, n\}$ and independently of all remaining quantities. Then, in distribution and with convergence of all moments, as $n \to \infty$,
\begin{align*}
\frac{Y_n(I_n)}{n} \to m(S_1) \stackrel{d}{=} m \circ h(\xi), \end{align*}
where $\xi$ is uniformly distributed on $[0,1]$.
\end{korollar}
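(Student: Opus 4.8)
The plan is to deduce the corollary from the quantile-model law of large numbers, Theorem~\ref{prop:mean}~$ii)$, by averaging over the rank, and then to promote convergence in distribution to convergence of all moments by a uniform integrability estimate. There are three ingredients: (a) weak convergence via test functions and dominated convergence; (b) the identification $m\circ h(\xi)\stackrel{d}{=}m(S_1)$; (c) a uniform bound on $\EE{(Y_n(I_n)/n)^p}$.

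For (a), I would fix a bounded continuous $f:\R\to\R$ and use that $I_n$ is independent of $(S_1,\dots,S_n)$ to write $\EE{f(Y_n(I_n)/n)}=\tfrac1n\sum_{k=1}^n\EE{f(Y_n(k)/n)}=\int_0^1\EE{f(Y_n(\lfloor tn\rfloor+1)/n)}\,dt$ (recalling $Y_n(n+1):=Y_n(n)$). For each fixed $t\in[0,1]\setminus F(\Sigma_0^\infty)$ one has $(\lfloor tn\rfloor+1)/n\to t$, so Theorem~\ref{prop:mean}~$ii)$ gives $Y_n(\lfloor tn\rfloor+1)/n\to m\circ h(t)$ almost surely, and then $\EE{f(Y_n(\lfloor tn\rfloor+1)/n)}\to f(m\circ h(t))$ by bounded convergence. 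Since $\Sigma_0^\infty$, and hence $F(\Sigma_0^\infty)$, is countable, this holds for Lebesgue-almost every $t$; the integrands being uniformly bounded by $\|f\|$, dominated convergence yields $\EE{f(Y_n(I_n)/n)}\to\int_0^1 f(m\circ h(t))\,dt=\EE{f(m\circ h(\xi))}$ with $\xi$ uniform on $[0,1]$, which is the asserted weak convergence. For (b), two independent strings being almost surely distinct, $F$ is atomless, so $F(S_1)$ is uniform on $[0,1]$ and $h(F(S_1))=S_1$ almost surely (the exceptional set, where $h$ lands in $\Sigma_{b-1}^\infty$, being $F(S_1)$-null since $\Sigma_{b-1}^\infty$ is countable); hence $h(\xi)\stackrel{d}{=}S_1$ and, $m$ being continuous, $m\circ h(\xi)\stackrel{d}{=}m(S_1)$.

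For (c) it suffices to show $\sup_{n}\EE{(Y_n(I_n)/n)^p}<\infty$ for every $p\ge 1$; together with (a) and $m\circ h(\xi)\le\|m\|<\infty$ this gives uniform integrability of all powers, hence convergence of all moments. First I would use exchangeability: averaging over $I_n$, using $\{S_{(1)},\dots,S_{(n)}\}=\{S_1,\dots,S_n\}$ and the exchangeability of $(S_1,\dots,S_n)$, one obtains $\EE{(Y_n(I_n)/n)^p}=\EE{(Z_n(S_1)/n)^p}$. Next, writing $\Lambda_{n,k}(S_1)=1+\#\{2\le i\le n:j(S_1,S_i)\ge k\}$ and noting that $\Lambda_{n,k}(S_1)\le 2\,\#\{2\le i\le n:j(S_1,S_i)\ge k\}$ whenever $\Lambda_{n,k}(S_1)>1$, one gets $Z_n(S_1)\le 2\sum_{i=2}^n(j(S_1,S_i)+1)$, so $Z_n(S_1)/n\le 2+\tfrac2n\sum_{i=2}^n j(S_1,S_i)$. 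Since $\rho:=\max_{i,j}p_{ij}<1$, one has $\ProbB{j(v,S_1)\ge k}\le\rho^{k-1}$ for all $v\in\Sigma^\infty$ and $k\ge1$, so $j(v,S_1)$ has all moments bounded uniformly in $v$; conditioning on $S_1$ (the variables $j(S_1,S_i)$, $2\le i\le n$, being i.i.d.\ with this uniform bound) and applying Minkowski's inequality bounds $\|n^{-1}\sum_{i=2}^n j(S_1,S_i)\|_p$ by a constant depending only on $p$ and $\rho$. This gives the required uniform bound.

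I expect the only real work to be in (c): the distributional statement is a soft consequence of Theorem~\ref{prop:mean}~$ii)$. The delicate point in (c) is that $\EE{(Z_n(S_{(k)})/n)^p}$ is \emph{not} bounded uniformly in the individual rank $k$ — for an atypical rank $k$ the string $S_{(k)}$ may share a prefix of length of order $\log n$ with another datum — which is why one passes to the average over $k$ via exchangeability before estimating, since $Z_n(S_1)$ then admits the clean domination above.
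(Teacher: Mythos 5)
Your argument is correct, and for the distributional convergence, parts (a) and (b) implement exactly what the paper indicates in one line (``setting $k = \lfloor t n\rfloor$ and integrating over $t$''): integrate the almost sure convergence from Theorem~\ref{prop:mean}~$ii)$ over $t$, use that $F(\Sigma_0^\infty)$ is Lebesgue-null, and identify the limit via the probability integral transform $F(S_1)\sim\mathrm{Unif}[0,1]$, $h(F(S_1))=S_1$ a.s. For the convergence of moments, your route is genuinely different from what the paper implicitly relies on. The paper's chain passes through the worst-case statement: in the proof of Theorem~\ref{prop:mean}~$ii)$, moment convergence is deduced from Theorem~\ref{thm:mainsup}, so the uniform bound $\sup_n\sup_k\E[(Y_n(k)/n)^p]<\infty$ comes from $Y_n(k)\le\max_v Z_n(v)$ together with moment convergence of the rescaled supremum. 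You instead use exchangeability to reduce to $\E[(Z_n(S_1)/n)^p]$ and bound this by the elementary domination $Z_n(S_1)\le 2\sum_{i\ge 2}(j(S_1,S_i)+1)$, with the geometric tail bound \eqref{bound:exp} and Minkowski. This is more self-contained (it does not invoke the machinery of Theorem~\ref{thm_simple2} and Theorem~\ref{thm:mainsup}) and is a nice observation.

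One small correction to your closing remark: the claim that $\E[(Z_n(S_{(k)})/n)^p]$ is \emph{not} bounded uniformly in $k$ is actually false --- since $Z_n(S_{(k)})\le\max_{v}Z_n(v)$ and Theorem~\ref{thm:mainsup} gives moment convergence of the rescaled maximum, the uniform bound does hold. What is true is that your particular elementary domination $Z_n(S_{(k)})\le 2\sum_i(j(S_{(k)},S_i)+1)$ fails to be uniformly $O(n)$ for a \emph{fixed} rank $k$ (adjacent order statistics share prefixes of length $\Theta(\log n)$), which is precisely why you first average over $k$. So the heuristic is sound, but the statement should be about the failure of the elementary estimate, not about unboundedness of the moments themselves.
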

We give further information on the limiting distribution in Proposition \ref{propZ}.

\medskip {\em Worst case rank.} The worst case behaviour of the algorithm is determined by the fluctuations at vectors $v \in \Sigma_{\text{max}}$ where
$$m_{\text{max}} := \max \{ m(v) : v \in \Sigma^\infty \}, \quad \Sigma_{\text{max}} := \{ v \in \Sigma^\infty : m(v) = m_{\text{max}} \}.$$
Note that $\max_{1 \leq \ell \leq n} Y_n(\ell) = \max_{v \in \Sigma^\infty} Z_n(v)$. In the memoryless case, $$\Sigma_{\text{max}} = \{ v \in \Sigma^\infty: p_{v_i} = \max_r p_r \text{ for all } i \geq 1\},$$
and $m_{\text{max}} = 1 /(1-\max_r p_r)$. In particular,
\begin{itemize}
\item in the situation of Theorem \ref{thm_simple} with $\beta \neq 1$, either $m_{\text{max}} = 1 / (1-p_0)$ and $\Sigma_{\text{max}} = \{00\ldots\}$ or
$m_{\text{max}} = 1 / (1-p_{b-1})$ and $\Sigma_{\text{max}} = \{(b-1)(b-1)\ldots\}$ depending on whether $\beta < 1$ or $\beta > 1$,
\item in the uniform model, $m_{\text{max}} = b / (b-1)$ and $\Sigma_{\text{max}} = \Sigma^\infty$.
\end{itemize}
\begin{theorem} \label{thm:mainsup}
Let $b \geq 2$ and consider Radix Selection with a Markov source. In distribution and with convergence of all moments, as $n \to \infty$,
$$\frac{ \max_{v \in \Sigma^\infty} Z_n(v) - m_{\emph{max}} n}{\sqrt{n}} \to \sup_{v \in \Sigma_{\emph{max}}} H(v).$$

\end{theorem}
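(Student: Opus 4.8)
The plan is to derive Theorem \ref{thm:mainsup} as a consequence of the functional limit theorem (Theorem \ref{thm_simple2}) via the continuous mapping theorem, plus a separate argument for moment convergence. Recall that $\max_{1 \le \ell \le n} Y_n(\ell) = \max_{v \in \Sigma^\infty} Z_n(v)$, so the left-hand side equals
\begin{align*}
\frac{\max_{v \in \Sigma^\infty} Z_n(v) - m_{\text{max}} n}{\sqrt n}
= \max_{v \in \Sigma^\infty} \left( \frac{Z_n(v) - m(v) n}{\sqrt n} + \frac{m(v) - m_{\text{max}}}{1/\sqrt n} \right).
\end{align*}
Write $X_n(v) := (Z_n(v) - m(v)n)/\sqrt n$, so that by Theorem \ref{thm_simple2} we have $X_n \to H$ in distribution in $\Co$. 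Set $g(v) := m(v) - m_{\text{max}} \le 0$, which is continuous on $\Sigma^\infty$ and vanishes exactly on $\Sigma_{\text{max}}$. The quantity of interest is $\Psi_n(X_n)$ where $\Psi_n(f) := \sup_{v} \big( f(v) + \sqrt n\, g(v) \big)$. The key point is that as $n \to \infty$ the penalty term $\sqrt n\, g(v)$ drives the supremum to concentrate on a shrinking neighbourhood of $\Sigma_{\text{max}}$, and on $\Sigma_{\text{max}}$ it is zero, so heuristically $\Psi_n(X_n) \approx \sup_{v \in \Sigma_{\text{max}}} X_n(v) \to \sup_{v \in \Sigma_{\text{max}}} H(v)$.

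To make this rigorous I would prove that the functional $f \mapsto \Psi_n(f)$ converges to $f \mapsto \sup_{v \in \Sigma_{\text{max}}} f(v) =: \Psi_\infty(f)$ in the sense needed for an extended continuous mapping theorem (Billingsley \cite[Theorem 5.5]{Billingsley1999}): namely, if $f_n \to f$ in $\Co$ then $\Psi_n(f_n) \to \Psi_\infty(f)$. For the upper bound, fix $\varepsilon>0$; by compactness of $\Sigma^\infty$ and continuity of $g$, the set $\{ g \ge -\delta\}$ shrinks to $\Sigma_{\text{max}}$ as $\delta \downarrow 0$, and $\|f_n - f\| \to 0$ with $\|f\|<\infty$ forces the contribution of $\{ g < -\delta\}$ to $\Psi_n(f_n)$ to be at most $\|f_n\| - \delta\sqrt n \to -\infty$; on $\{g \ge -\delta\}$ we bound $\Psi_n(f_n)$ by $\sup_{\{g\ge-\delta\}} f_n \le \sup_{\{g\ge-\delta\}} f + \|f_n - f\|$, and then let $\delta \to 0$ using uniform continuity of $f$ and the fact that $\{g\ge-\delta\} \downarrow \Sigma_{\text{max}}$ (a closed set, being the zero set of a continuous function). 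For the lower bound, restricting the supremum to $\Sigma_{\text{max}}$ (where $g = 0$) gives $\Psi_n(f_n) \ge \sup_{v \in \Sigma_{\text{max}}} f_n(v) \ge \sup_{v \in \Sigma_{\text{max}}} f(v) - \|f_n - f\|$. Combining yields $\Psi_n(f_n) \to \Psi_\infty(f)$. Since $H$ takes values in $\Co$ almost surely, $\Psi_\infty$ is well-defined on the limit, and the continuous mapping theorem delivers convergence in distribution.

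For the convergence of all moments I would establish uniform integrability of $\big( \Psi_n(X_n) \big)^p$ for every $p \ge 1$, i.e. a uniform bound $\sup_n \EE{ |\Psi_n(X_n)|^{p} } < \infty$. Here I expect to re-use the moment estimates underlying the proof of Theorem \ref{thm_simple2}: one needs that $\EE{ \sup_v |X_n(v)|^p }$ is bounded in $n$, which should come out of the tightness/chaining argument for the process convergence (a maximal-inequality bound on the modulus of continuity of $X_n$ together with a bound at a fixed point). Given such a bound, $|\Psi_n(X_n)| \le \sup_v |X_n(v)| + \sqrt n \sup_v |g(v)|$ is not directly useful because of the $\sqrt n$ factor, so instead I would argue $\Psi_n(X_n) \le \sup_v X_n(v) \le \sup_v |X_n(v)|$ from above (since $g \le 0$) and $\Psi_n(X_n) \ge \sup_{v\in\Sigma_{\text{max}}} X_n(v) \ge -\sup_v |X_n(v)|$ from below, giving $|\Psi_n(X_n)| \le \sup_v |X_n(v)|$, whose $p$-th moment is uniformly bounded. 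Uniform integrability then promotes the weak convergence to convergence of all moments.

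The main obstacle is the first step of the extended mapping argument — showing the supremum genuinely localizes near $\Sigma_{\text{max}}$ uniformly along convergent sequences $f_n \to f$, in particular handling the case of the uniform model where $\Sigma_{\text{max}} = \Sigma^\infty$ (there $g \equiv 0$, $\Psi_n = \Psi_\infty = \sup_v$, and the statement is immediate, so the genuinely delicate case is $\Sigma_{\text{max}} \subsetneq \Sigma^\infty$). The potential subtlety is that $\Sigma_{\text{max}}$ could in principle be a complicated closed subset of $\Sigma^\infty$; but since it is closed and $f$ (hence $H$) is uniformly continuous on the compact space $\Sigma^\infty$, $\sup_{\{g \ge -\delta\}} f \to \sup_{\Sigma_{\text{max}}} f$ as $\delta \downarrow 0$, which is exactly what the argument needs. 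A secondary point requiring care is measurability of $\sup_{v \in \Sigma_{\text{max}}} H(v)$, which follows from almost-sure continuity of $H$ and separability of $\Sigma_{\text{max}}$.
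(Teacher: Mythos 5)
Your proposal is correct and takes essentially the same approach as the paper: both exploit the $\Co$-convergence from Theorem \ref{thm_simple2} together with the localization near $\Sigma_{\text{max}}$ forced by the $\sqrt{n}\,(m(v)-m_{\text{max}})$ penalty, and both establish moment convergence by sandwiching the normalized maximum between $\pm\|X_n\|$ and invoking the $L^p$-bounds from the contraction-method proof. The only cosmetic difference is that you package the argument via the extended continuous mapping theorem, whereas the paper applies Skorokhod's representation theorem and argues directly on an almost-sure coupling.
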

The distribution of the supremum of $H$ is studied in Proposition \ref{suptails}.
The set $\Sigma_{\text{max}}$ typically contains exactly one element. $\Sigma_{\text{max}} = \Sigma^\infty$ holds only in the uniform model. In general, $\Sigma_{\text{max}}$ can be finite, countably infinite or uncountable.  See the examples at the end of  Section \ref{sec_worst_case_rank}.

\medskip
{\bf Related work.}
A general reference on bucket algorithms is Devroye \cite{de86}. A large body of probabilistic analysis of digital structures is based on methods from analytic combinatorics, see Flajolet and Sedgewick \cite{flse09}, Knuth \cite{kn98} and Szpankowski \cite{sz01}. For an approach based on renewal theory see Janson \cite{ja12} and the references given therein. Our Markov source model is a special case of the model of dynamical sources, see Cl{\'e}ment, Flajolet and Vall{\'e}e \cite{clflva01} as well as \cite{HuVa14,CeVa15}. A related important model is the density model studied in Devroye \cite{de92}. A fundamental related comparison-based selection algorithm is Quickselect (or FIND).  Below, we give a description of the routine and a detailed comparison with Radix Selection.

Let us also discuss the distributional functional limit theorems such as Theorem \ref{thm_simple} in a wider context.  In recent years, starting with Gr{\"u}bel and R{\"o}sler's seminal work \cite{GR}
on the complexity of  Quickselect,  the probabilistic analysis of several algorithms and data
structures has led to a number of functional limit theorems involving interesting non-standard limiting distributions. Apart from \cite{GR}, we can name  the study of
the profile of random search trees \cite{CKMR, DJN}, the silhouette of binary search trees \cite{grsil}, partial match queries in multidimensional search trees \cite{BNS}, Quickselect variants with increasing median selection
\cite{sunedr14} and the Quicksort process \cite{raro}. Limiting processes can be divided into four groups: processes with smooth sample paths \cite{CKMR}, \cite{DJN}, processes with continuous but non-differentiable paths \cite{grsil}, \cite{BNS}, processes with discontinuities on a random dense subset of the parameter space \cite{GR}, \cite{raro} and Gaussian limits with discontinuities on a deterministic dense subset of the parameter space \cite{sunedr14}. The limiting processes arising in this work fall in the last group. As in \cite{sunedr14}, these processes have continuous sample paths with respect to a suitable non-Euclidean topology.

\medskip
{\bf Radix Sorting and tries.}
The Radix Sorting algorithm  starts by assigning all data to the buckets as for Radix Selection. Then  it recurses on all buckets containing more than one datum. This leads to a sorting algorithm whose complexity  is measured by the total number of bucket operations. The tree underlying the Radix Sorting algorithm in which strings are assigned to leaves is a well-known data structure, a so-called {\em digital tree} or {\em trie}.  See Figure \ref{pic_radsel2} for the binary trie corresponding to the data in Figure \ref{pic_radsel}. (It is convenient to add empty leaves in such a way that all internal nodes have two children.) One observes that
the number of symbol comparisons necessary to construct the trie coincides with the complexity of Radix Sorting. Similarly, $Z_n(v)$ is equal to the number of symbol comparisons required to determine the unique leaf on the
infinite path $v$ in the trie. (Note that this leaf may or may not store a string.)

The complexity of Radix Sorting has been analyzed thoroughly in the uniform model  with precise expansions for mean and variance involving periodic functions and a central limit law for the normalized complexity, see Knuth \cite{kn98}, Jacquet and R{\'e}gnier \cite{jare88}, Kirschenhofer, Prodinger and Szpankowski \cite{kiprsz89}  and Mahmoud et al.~\cite{mafljare00}.
For the Markov source model (with $b=2$ and $0<p_{00}, p_{10} <1$) the orders of mean and variance and a central limit theorem for the complexity of Radix Sorting were derived  by Leckey, Neininger and Szpankowski \cite{lenesz15}.

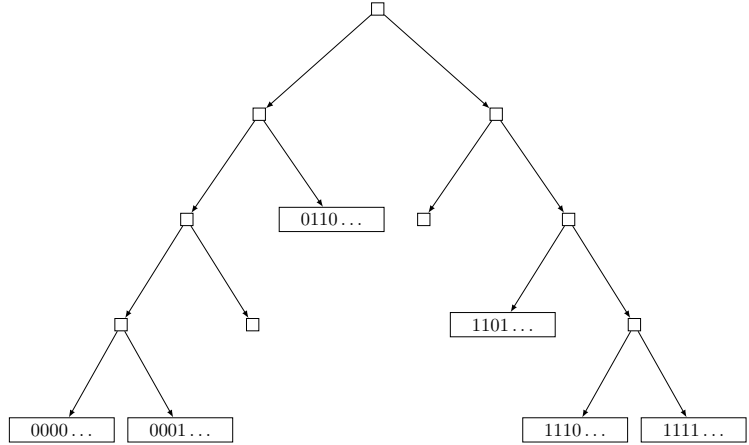
\begin{figure}
\begin{center}
\scalebox{0.7}{
\begin{tikzpicture}[edge from parent/.style={draw,-latex}]
\tikzstyle{level 0}=[level distance=1cm]
\tikzstyle{level 1}=[sibling distance=45mm,  level distance=2cm]
\tikzstyle{level 2}=[sibling distance=27.5mm,  level distance=2cm]
\tikzstyle{level 3}=[sibling distance=25mm, level distance=2cm]
\tikzstyle{level 4}=[sibling distance=22.5mm, level distance=2cm]
\node[draw]{}
  child{node[draw]{}
    child{node[draw]{}
      child{node[draw]{}
	child{node[draw,text width=5em, text centered]{$0000\ldots$}
	}
	child{node[draw,text width=5em, text centered]{$0001\ldots$}
	}
      }
      child{node[draw]{}
      }
    }
    child{node[draw,text width=5em, text centered]{$0110\ldots$}
    }
  }
  child{node[draw]{}
  child{node[draw]{}
      }
	child{node[draw]{}
      child{node[draw, text width=5em, text centered]{$1101\ldots$}}
			child{node[draw]{}
						child{node[ draw, text width=5em, text centered]{$1110\ldots$}}
						child{node[, draw, text width=5em, text centered]{$1111\ldots$}}
						}
						}
	}
;
\end{tikzpicture}
}
\end{center}
\caption{Binary trie corresponding to the data in Figure \ref{pic_radsel}.  }
\label{pic_radsel2}
\end{figure}

\medskip
{\bf Quickselect.} In a set $\mathcal S$ of $n$ data from a totally ordered set,
the Quickselect algorithm retrieves the $k$-th smallest element ($1 \leq k \leq n$) as follows: first, a pivot element $v \in \mathcal S$ is chosen and subsets $\mathcal S_< = \{w \in S : w < v\}$ and $\mathcal S_{\geq} = \mathcal S \setminus \mathcal S_<$ are generated by comparing every element to $v$. If necessary, the routine is recursively applied to the sublist which contains the sought datum. When the data are given as strings over $\Sigma$, its complexity has recently been studied under the model of symbol comparisons required to retrieve the $k$-th smallest element \cite{vaclfifl3, FillNak1, FillNak2, FillMat, clfingva}. Results in this context bear similarities with our findings in this work.

Let $C_n(k)$ denote the number of bit comparisons required to retrieve the $k$-th smallest elements in the set $\mathcal S = \{S_1, \ldots, S_n\}$ using the Quickselect routine with  uniform pivot choice (again with convention $C_n(n+1) := C_n(n)$). By Theorem 2 in Vall\'{e}e et al. \cite{vaclfifl3}, for $t \in [0,1]$, we have
\begin{align} \label{qsel} \EE{C_n(\lfloor t n \rfloor + 1)} = \varrho(t) n + O(n^{1-\delta}),\end{align}
for some $\delta > 0$.  Here,
$$\varrho(t)= 2 \sum_{v \in \Sigma^*} \pi(v)\left(1 + H\left(\frac{|t - (\mathbb P(S_1 < v) + \pi(v)/2)|}{\pi(v)} \right)\right),$$
where, with $y^+ := y + 1/2, y^- := -y + 1/2$ and $0 \log 0 := 0$,
$$H(y) = \begin{cases}  -(y^+ \log y^+ + y^- \log y^-), &\text{ if }y<1/2,\\ y^-(\log y^+ - \log |y^-|),&\text{ if } y \geq 1/2.\end{cases}$$
(Note that the definition of the term $\mu_w$ on the fourth page of \cite{vaclfifl3} appearing in the definition of $\varrho$ there needs to be replaced by $\mu_w := (1/2) (p_w^{(-)} +1 - p_w^{(+)})$ when using the notation of this paper. See \cite[Page 6, line 1]{FillNak1}.)
This result holds in the more general framework of $\Pi$-tame sources, see Corollary \ref{gensource} and the discussion thereof in Section \ref{sec:lln} for details. For an illustration of $\varrho$ and a comparison with our limit mean function $m \circ h$ with uniform initial distribution $\mu$ and $b=2$, see Figure \ref{fig10}.

The asymptotic distributional behaviour of $C_n(\lfloor t n \rfloor+1), n \geq 1$ is strikingly different from that of
$Y_n(\lfloor t n \rfloor+1), n \geq 1$. Asymptotically,
$C_n(\lfloor t n \rfloor+1), n\geq 1$
is  not concentrated around its mean and features a random almost sure limit when dividing through its expectation. See \cite[Theorem 4.1]{FillNak2}.

Finally, for a worst rank analysis corresponding to our Theorem \ref{thm:mainsup} of Quickselect in the standard comparison based model of complexity see Devroye \cite{dev01}.

\begin{figure}[h]
\begin{center}
\subfigure[$p_{00}$=0.7, $p_{10}$=0.3\label{subfig_1}]{
\mbox{
 \includegraphics[scale=0.6]{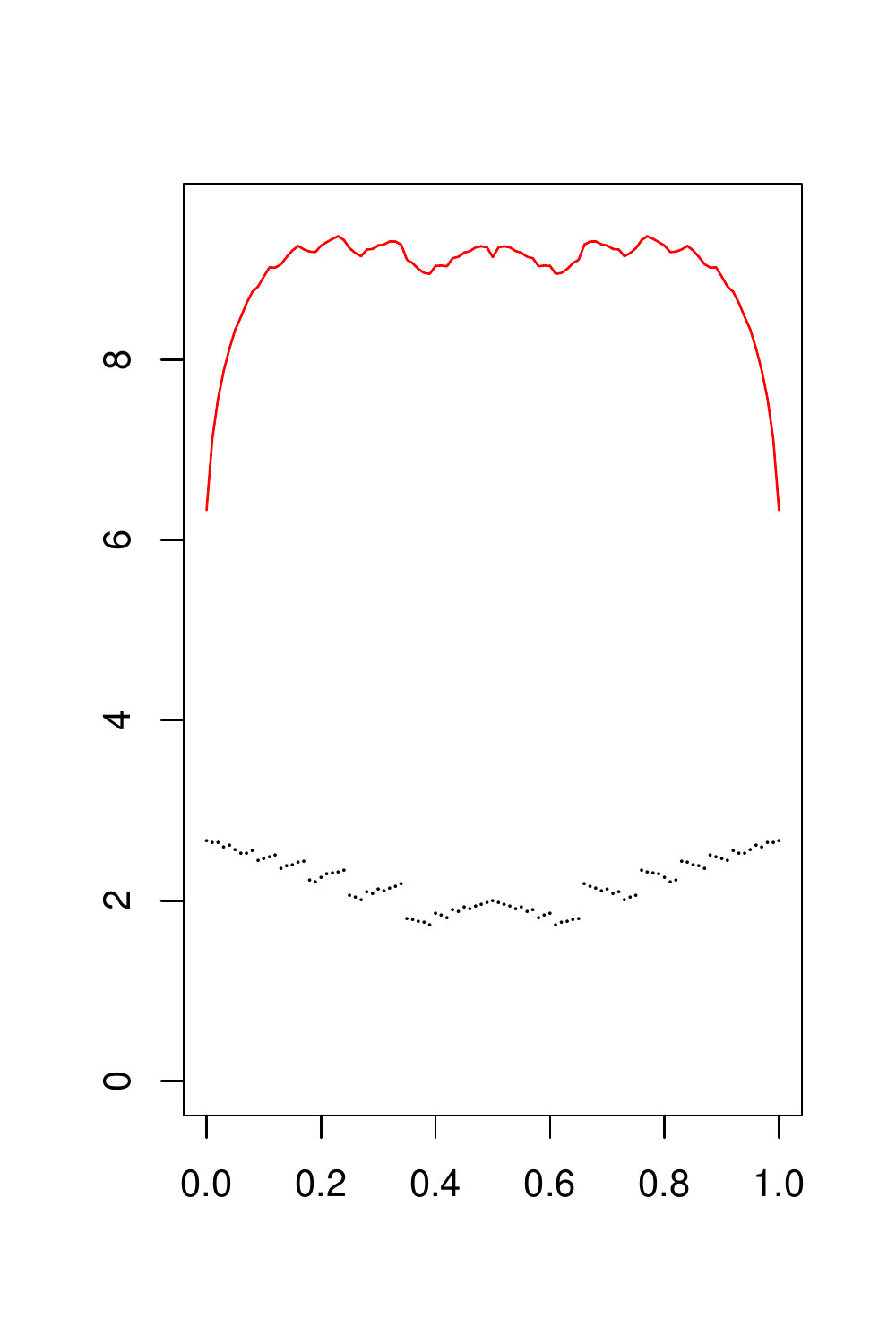}
}
}
\subfigure[$p_{00}$=0.25, $p_{10}$=0.4\label{subfig_2}]{
\mbox{
\includegraphics[scale=0.6]{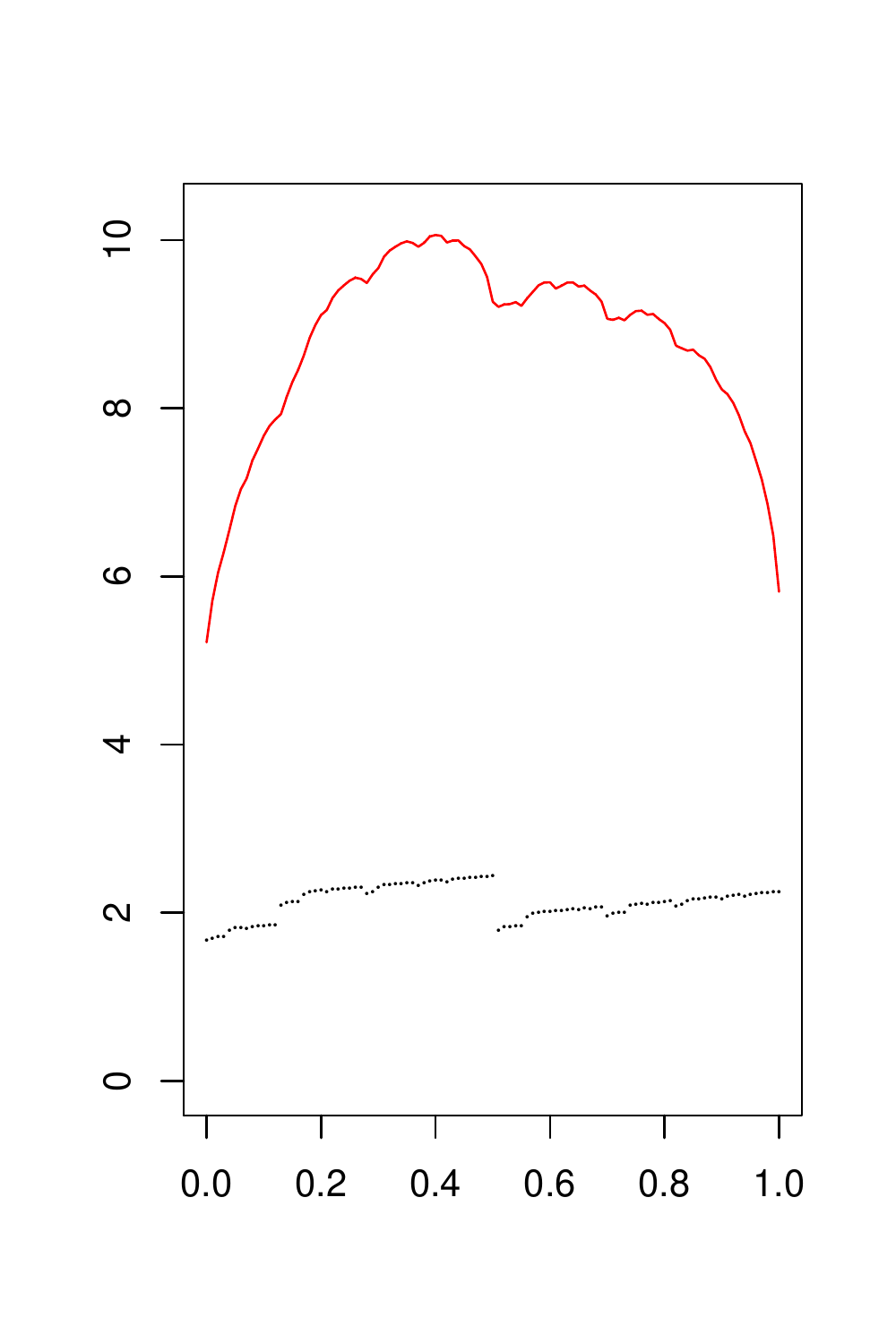}
}
}
\end{center}
\caption{Discontinuous limit mean function $m\circ h$ for Radix Selection and continuous mean function $\varrho$ for Quickselect for two sets of parameters.}
\label{fig10}
\end{figure}

\medskip
{\bf Connection to the contraction method.} The proofs of our main functional limit theorems are based on an underlying contraction argument. Thus, our approach is  strongly related to results from the contraction method, a methodology which has proved very fruitful in the probabilistic analysis of algorithms. See 
\cite{roru01, neru04} for surveys. The main idea of the contraction method is to set up a distributional recurrence for the sequence of random variables under investigation and to derive a stochastic fixed-point equation for the corresponding limiting process. Convergence is then established by recursive arguments. See the elaborate discussion preceding and preparing the proof of Theorem \ref{thm_simple2} in Section \ref{sec:uniform}. Regarding recent developments on functional limit theorems via contraction arguments, the underlying stochastic fixed-point equations characterizing the Gaussian limit processes arising in our work are functional extensions of so-called perpetuities and therefore easier to treat than applications involving Zolotarev metrics \cite{BNS, nesu15}. In particular, we are able to use techniques based on $L_2$-estimates in the spirit of the applications given in  \cite{sunedr14, raro, BS}.

\medskip
{\bf Plan of the paper.}  The paper is organized as follows: in Section \ref{sec:quantile_model} we study the quantile model starting with important definitions and preliminary results on the functions $m, F$ and $h$ in Section \ref{sec:prel}. Here, we also prove a key lemma on the asymptotic behaviour of $S_{(k)}$ when $k \sim tn$ for $t \in [0,1]$. In Section \ref{sec:lln} we prove Theorem \ref{prop:mean}, convergence of the marginal distribution in Theorem \ref{thm_simple2} and discuss extensions of these results to general probabilistic sources. Section \ref{sec:uniform} is devoted to the proof of Theorem \ref{thm_simple2} using a functional version of the contraction method. Here, we also give a characterization of $H$ by a family of distributional fixed point equations.
Section \ref{sec:second} concerns the proof of Theorem \ref{thm_simple} and a classification of transition probabilities for which the complexity admits a functional convergence result. While the proof of Theorem \ref{thm_simple} is very similar to that of Theorem \ref{thm_simple2} and not given with all details, we invest a significant amount of work on the verification of the technical steps to obtain functional convergence in the Skorokhod topology. See Proposition \ref{prop:nat}.
In Section \ref{sec:markov:av} we consider the models of grand averages and worst case rank. Here, in the short Section \ref{sec:markov:av1} we present more details on the limiting distribution from Corollary \ref{thm:maingrand}. Section \ref{sec_worst_case_rank} contains the proof of Theorem \ref{thm:mainsup} as well as further properties of the suprema of the processes. We also discuss the set $\Sigma_{\text{max}}$ there.

\medskip Some results of the present paper have been announced in the extended abstract \cite{lenesu14}.

\section{The quantile-model} \label{sec:quantile_model}

\subsection{Preliminaries} \label{sec:prel}
We start by giving more  definitions and collecting elementary properties of the functions $m, F$ and $h$.
Fix $b \geq 2$ and $\mu, (p_{ij})_{i,j \in \Sigma}$  as in the introduction.  Let
\begin{align}
\bar v &:= v_2 v_3 \ldots \mbox{ for } v=v_1v_2\ldots \in \Sigma^\infty,\label{trunc}\\
v^{(k)} &:= v_1 \ldots v_k \mbox{ for } k \geq 1,  v \in \bigcup_{n =k}^\infty \Sigma^n \cup \Sigma^\infty, v^{(0)} = \emptyset, \nonumber\\
v- &:= v_1 \ldots v_{k-1} (v_k-1) (b-1) (b-1) \ldots \in \Sigma_{b-1}^\infty \mbox{ for } v = v_1 \ldots v_k 0 0\ldots\in \Sigma^\infty_0, v_k \neq 0.\nonumber
\end{align}
Further, for $r \in \Sigma$, let $\pi_r$ ($m_r, F_r$ and $h_r$, respectively) denote the function $\pi$ ($m$, $F$ and $h$, respectively) when choosing
\begin{align} \label{rini} \mu^r := \sum_{k=0}^{b-1} p_{rk} \delta_k \end{align} as initial distribution.
We finally note that, by our assumptions on the Markov chain,
\begin{align} \label{bound:exp} r_n := \sup_{\mu} \sup_{v \in \Sigma^n} \pi(v) \leq p_{\max}^{n-1}, \quad p_{\max} := \max \{p_{ij} : i,j \in \Sigma \} < 1. \end{align}
Here the first supremum in the first expression is taken over all initial distributions of the chain.

\begin{proposition} \label{prop:elem} Let $b \geq 2$.
\begin{enumerate} 
\item For 
$v = v_1 \ldots v_k 0 0\ldots\in \Sigma_0^\infty$, we have  
\begin{equation}\label{discont_m}
\begin{aligned}
m(v)  & = \sum_{\ell = 0}^{k-1} \pi(v^{(\ell)}) + \pi(v^{(k-1)})
p_{v_{k-1}v_{k}} \left(1+ \frac{p_{v_{k}  0}}{1-p_{00}}\right), \\
m(v-)  & = \sum_{\ell = 0}^{k-1} \pi(v^{(\ell)})  + \pi(v^{(k-1)})
p_{v_{k-1}v_{k}-1} \left(1+ \frac{p_{(v_{k} -1) (b-1)}}{1-p_{(b-1)(b-1)}}\right).
\end{aligned}
\end{equation}
We also have $F(v) = F(v-)$ for all $v \in \Sigma_0^\infty$.
\item For all $v \in \Sigma^\infty$,
$$m(v) = 1 +  \mu_{v_1}  m_{v_1}(\bar v), \quad F(v) = \mu_{v_1} F_{v_1}(\bar v) + \sum_{k=0}^{v_1-1} \mu_k .$$ In particular, for all $r \in \Sigma$,
$$m_r(v) = 1 +  p_{r v_1}  m_{v_1}(\bar v), \quad F_r(v) = p_{r v_1} F_{v_1}(\bar v) + \sum_{k=0}^{v_1-1} p_{r k}.$$
 $m_0, \ldots, m_{b-1}$ are the unique bounded functions on $\Sigma^\infty$ satisfying the first system of equations in the last display. Similarly, $F_0, \ldots, F_{b-1}$ are the unique bounded functions on $\Sigma^\infty$ satisfying the second system of equations in the last display.
\end{enumerate}
\end{proposition}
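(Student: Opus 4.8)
The plan is to establish $ii)$ first, since the recursions there encode a first-symbol decomposition of the source that also drives $i)$. For $m$ I would start from $m(v)=\sum_{\ell\ge 0}\pi(v^{(\ell)})$, a sum over all finite prefixes of $v$ which converges geometrically by \eqref{bound:exp}. The term $\ell=0$ equals $\pi(\emptyset)=1$, and for $\ell\ge 1$ the definitions of $\pi$, of $\pi_{v_1}$, and of the initial law $\mu^{v_1}$ give the identity $\pi(v^{(\ell)})=\mu_{v_1}\pi_{v_1}(\bar v^{(\ell-1)})$; reindexing the remaining sum yields $m(v)=1+\mu_{v_1}\sum_{j\ge 0}\pi_{v_1}(\bar v^{(j)})=1+\mu_{v_1}m_{v_1}(\bar v)$. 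For $F$ I would condition on the first symbol of $S_1$: if it is $<v_1$ then $S_1<v$ (total probability $\sum_{k<v_1}\mu_k$), if it is $>v_1$ then $S_1>v$, and if it equals $v_1$ then $S_1\le v$ precisely when the sequence of remaining symbols, distributed by homogeneity as a chain with initial law $\mu^{v_1}$, is $\le\bar v$, contributing $\mu_{v_1}F_{v_1}(\bar v)$. The formulas for $m_r$ and $F_r$ are the two displays just obtained with $\mu$ replaced by $\mu^r$, i.e. $\mu_{v_1}\mapsto p_{rv_1}$ and $\sum_{k<v_1}\mu_k\mapsto\sum_{k<v_1}p_{rk}$; this is legitimate because, by homogeneity, the source governing the symbols after the first does not depend on whether the chain was started from $\mu$ or from $\mu^r$.

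For the uniqueness in $ii)$, let $(f_r)_{r\in\Sigma}$ and $(g_r)_{r\in\Sigma}$ be two families of bounded functions on $\Sigma^\infty$ satisfying $f_r(v)=1+p_{rv_1}f_{v_1}(\bar v)$ for all $r\in\Sigma$, $v\in\Sigma^\infty$. Put $d_r:=f_r-g_r$ and $\Delta:=\max_{r\in\Sigma}\sup_{v\in\Sigma^\infty}|d_r(v)|<\infty$. The constant $1$ cancels, so $d_r(v)=p_{rv_1}d_{v_1}(\bar v)$ and hence $|d_r(v)|\le p_{\max}\Delta$ for all $r,v$; taking suprema gives $\Delta\le p_{\max}\Delta$, which forces $\Delta=0$ since $p_{\max}<1$. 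The identical one-line contraction handles $F_0,\dots,F_{b-1}$, the additive term $\sum_{k<v_1}p_{rk}$ again cancelling in the difference. Finally $m_0,\dots,m_{b-1}$ and $F_0,\dots,F_{b-1}$ are bounded (by \eqref{bound:exp}, $m_r\le\sum_{\ell\ge 0}p_{\max}^{\ell-1}<\infty$, and $F_r\le 1$) and satisfy the respective systems by the previous step, so they are the unique bounded solutions.

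For part $i)$, fix $v=v_1\dots v_k00\dots\in\Sigma_0^\infty$ with $v_k\ne 0$. The prefixes of $v$ of length $\ell>k$ are $v_1\dots v_k\,0\dots 0$ with $\ell-k$ trailing zeros, so $\pi(v^{(\ell)})=\pi(v^{(k)})p_{v_k0}p_{00}^{\ell-k-1}$; summing this geometric tail (ratio $p_{00}<1$) and using $\pi(v^{(k)})=\pi(v^{(k-1)})p_{v_{k-1}v_k}$ — read as $\pi(v^{(1)})=\mu_{v_1}$ when $k=1$ — gives the first line of \eqref{discont_m}. The computation for $v-=v_1\dots v_{k-1}(v_k-1)(b-1)(b-1)\dots$ is the same with $k$ unchanged and $0\mapsto b-1$ throughout, producing the second line. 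For $F(v)=F(v-)$ I would argue that no $w\in\Sigma^\infty$ satisfies $v-<w<v$: such a $w$ must coincide with $v_1\dots v_{k-1}$ (otherwise it lies on the same side of both $v-$ and $v$), and then at position $k$ it can only take the value $v_k-1$ or $v_k$, which forces $w\le v-$ or $w\ge v$ respectively because $(b-1)(b-1)\dots$ and $00\dots$ are the maximal and minimal tails. Since in addition $\ProbB{S_1=v-}=0$, this gives $F(v)=\ProbB{S_1\le v}=\ProbB{S_1<v}=\ProbB{S_1\le v-}=F(v-)$.

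There is no genuine obstacle here: the proposition is bookkeeping around the first-symbol decomposition and two geometric series. The only points needing a little care are the edge case $k=1$ in $i)$ and the short order-theoretic fact that the open interval $(v-,v)$ in $(\Sigma^\infty,<)$ is empty, on which $F(v)=F(v-)$ rests.
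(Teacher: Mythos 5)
Your proof is correct and follows essentially the same route as the paper's: the paper dismisses $i)$ and the recursions in $ii)$ as immediate from the construction (which you have spelled out via the first-symbol decomposition of $m$ and $F$ and the two geometric tail sums), and for uniqueness the paper invokes the same contraction of the map $T(f)_r(v)=1+p_{rv_1}f_{v_1}(\bar v)$ in the sup-norm on $\mathfrak B(\Sigma^\infty)^b$ with Lipschitz constant $p_{\max}<1$, which you phrase equivalently as $\Delta\le p_{\max}\Delta$ for the sup-norm of a difference of two solutions. Your treatment of $F(v)=F(v-)$ via the emptiness of the interval $(v-,v)$ together with $\ProbB{S_1=v}=0$, and your handling of the $k=1$ edge case, are both sound and fill in details the paper leaves implicit.
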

\begin{proof}
 Both $i)$ and the equations in $ii)$ follow immediately from the construction. For the uniqueness claim, let $\mathfrak B(\Sigma^\infty)$ be the set of bounded functions  on $\Sigma^\infty$ endowed with the supremum norm. Equip $\mathfrak B(\Sigma^\infty)^b$ with the natural max-norm. We define the operator $T$ on this set by
 $$T(f)_r(v) = 1 +  p_{rv_1}  f_{v_1}(\bar v), \quad r \in \Sigma.$$
 Then, for $f, g \in \mathfrak B(\Sigma^\infty)^b$, one easily establishes $\| T(f) - T(g)\| \leq p_{\max} \|f-g\|$. As $p_{\max} < 1$, it follows that
 $T$ has at most one fixed point, the function $f = (m_0, \ldots, m_{b-1})$. For $F_0, \ldots, F_{b-1}$ on proceeds analogously.
\end{proof}

Theorem \ref{prop:mean} $ii)$ and Theorem \ref{thm_simple} crucially rely on the asymptotic behaviour of $S_{(k)}$ when $k/n \to t \in [0,1].$ This explains the importance of the following lemma.

\begin{lemma} \label{lem:asyS}
Let $k = k(n) \in \{1, \ldots, n\}$ with $k/n \to t \in [0,1]$.
\begin{enumerate} \item If $t \notin F(\Sigma_0^\infty)$, then $S_{(k)} \to h(t)$ almost surely. More precisely, for any $M \in \N$,
$$\sum_{n=1}^\infty \ProbB{j(S_{(k)}, h(t)) \leq M} < \infty.$$
\item If $t \in F(\Sigma_0^\infty)$, then, for any $M \in \N$,
$$\sum_{n=1}^\infty \ProbB{\max \{j(S_{(k)}, h(t)), j(S_{(k)}, h(t-))\} \leq M} < \infty.$$
\end{enumerate}

\end{lemma}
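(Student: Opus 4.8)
The plan is to reduce the statement to a quantitative deviation estimate for the empirical distribution function $F_n(w) := \frac1n \#\{1 \le i \le n : S_i \le w\}$, and then to apply a Borel--Cantelli argument exploiting the exponential decay of fundamental probabilities from \eqref{bound:exp}. For part $i)$, fix $t \notin F(\Sigma_0^\infty)$ and $M \in \N$. The key observation is that the event $\{j(S_{(k)}, h(t)) \le M\}$ forces $S_{(k)}$ to lie outside the ball $B_M := \{v : v^{(M)} = h(t)^{(M)}\}$, which is a cylinder set of the form $\{w : v' \le w < v''\}$ for two finite strings $v', v''$ of length $M$ (the ``left'' and ``right'' ends of the cylinder). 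Since $S_{(k)} \notin B_M$ means either $S_{(k)} < v'$ or $S_{(k)} \ge v''$, this translates into $F_n(v'-) \ge k/n$ or $F_n(v'') \le (k-1)/n$, i.e. the empirical c.d.f. at one of finitely many deterministic points deviates from its mean $F(v')$ resp. $F(v'')$ by a fixed positive amount. Here one uses that $t \notin F(\Sigma_0^\infty)$ guarantees (via the continuity properties of $h$ recorded after the definition of $h$, and $F(v) = F(v-)$ on $\Sigma_0^\infty$) that $h(t)$ lies strictly inside the cylinder $B_M$ in the sense that $F(v') < t < F(v'')$ with strict inequalities, so for $n$ large $k/n$ is bounded away from both $F(v')$ and $F(v'')$ by some $\varepsilon = \varepsilon(t,M) > 0$.

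With this reduction in hand, the tail probability is bounded by a finite sum of terms of the form $\ProbB{|F_n(w) - F(w)| \ge \varepsilon}$ over $w \in \{v', v''\}$. Each such term is controlled by a standard concentration inequality for sums of i.i.d.\ bounded random variables (Hoeffding, or simply a fourth-moment / Chernoff bound), giving $\ProbB{|F_n(w) - F(w)| \ge \varepsilon} \le 2 e^{-2 n \varepsilon^2}$, which is summable in $n$. Summing over the finitely many boundary points of $B_M$ yields $\sum_n \ProbB{j(S_{(k)}, h(t)) \le M} < \infty$, and Borel--Cantelli then gives $j(S_{(k)}, h(t)) \to \infty$ almost surely, i.e.\ $S_{(k)} \to h(t)$ in $(\Sigma^\infty, \mathcal T_\infty)$. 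Note the only place the Markov structure enters is through $F$ being a genuine (atomless, since two independent strings are a.s.\ distinct) distribution function, so that the boundary points $v', v''$ have well-defined probabilities $F(v'), F(v'')$; the argument is otherwise source-agnostic, matching the remark that these statements extend to general probabilistic sources.

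For part $ii)$, when $t \in F(\Sigma_0^\infty)$ the point $h(t)$ sits at the right end of a jump of $h$: we have $h(t-) < h(t)$, and $h(t-) \in \Sigma_{b-1}^\infty$ with $h(t-) = h(t)-$ in the notation of \eqref{trunc}, while $F(h(t-)) = F(h(t)) = t$. The event $\{\max(j(S_{(k)}, h(t)), j(S_{(k)}, h(t-))) \le M\}$ now forces $S_{(k)}$ to avoid the union of the two cylinders $B_M$ around $h(t)$ and around $h(t-)$. Because $h(t-)$ and $h(t)$ are ``adjacent'' (the string $h(t-)$ is the predecessor of $h(t)$ in the lexicographic sense, differing by the $\Sigma_0^\infty / \Sigma_{b-1}^\infty$ swap), for $M$ large these two cylinders share a common boundary value, say $v^\ast$, with $F(v^\ast) = t$, and avoiding both cylinders means $S_{(k)}$ lands either strictly to the left of the left end $v'$ of $B_M(h(t-))$ or at/to the right of the right end $v''$ of $B_M(h(t))$ — i.e.\ exactly as in part $i)$ but now with the relevant deterministic c.d.f.\ values $F(v')$ and $F(v'')$ satisfying $F(v') < t < F(v'')$ strictly (the potentially problematic value $F(v^\ast) = t$ is interior to the avoided region, not a boundary we test). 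So again $k/n$ is eventually bounded away from $F(v')$ and $F(v'')$, and the same concentration-plus-Borel--Cantelli argument closes the proof.

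The main obstacle I expect is purely bookkeeping rather than analytic: carefully identifying, in both parts, the correct finite strings $v', v''$ describing the ends of the cylinder(s) $B_M$, checking that $F(v') < t < F(v'')$ holds with \emph{strict} inequalities (this is precisely where the hypotheses $t \notin F(\Sigma_0^\infty)$ in $i)$, and the two-sided avoidance in $ii)$, are used, together with the continuity/one-sided-limit statements for $h$ recorded just after its definition), and handling the degenerate edge cases where $h(t)$ or $h(t-)$ has $h(t)^{(M)}$ equal to the all-$0$ or all-$(b-1)$ prefix, in which one of $v', v''$ may be absent (the cylinder touches an endpoint of $\Sigma^\infty$ in the order $<$) so that only one deviation event survives. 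Once the geometry of the cylinders is pinned down, the probabilistic core is a one-line concentration bound.
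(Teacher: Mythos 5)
Your proposal is correct and takes essentially the same route as the paper: the paper likewise decomposes the event $\{j(S_{(k)},h(t))<M\}$ into ``at least $k$ strings fall strictly below $h(t)^{(M)}$'' and ``fewer than $k$ fall at or below $h(t)^{(M)}$'', observes these are binomial counts with success probabilities strictly below/above $t$, and invokes binomial large deviations; your reformulation via the empirical c.d.f.\ $F_n$ at the two cylinder endpoints plus Hoeffding is the same estimate in different clothing. The only nit is a harmless index shift (the event $\{j(S_{(k)},h(t))\le M\}$ corresponds to $S_{(k)}$ leaving the length-$(M+1)$ cylinder, not the length-$M$ one), and you spell out part $ii)$ and the $t\in\{0,1\}$ boundary cases more carefully than the paper, which simply says the proof of $ii)$ is ``similar''.
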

\begin{proof}
Let $v=h(t)$. We have
\begin{align*}
& \ProbB{j(S_{(k)}, v) < M}  \\ & = \ProbB{\sum_{w < v^{(M)}, w \in \Sigma^M} \Lambda_{n,M}(w) \geq k} + \ProbB{\sum_{w \leq v^{(M)}, w \in \Sigma^M} \Lambda_{n,M}(w) < k} \\
& = \ProbB{\text{Bin}\left(n, \sum_{w < v^{(M)}, w \in \Sigma^M} \pi(w)\right) \geq k} +  \ProbB{\text{Bin}\left(n, \sum_{w \leq v^{(M)}, w \in \Sigma^M} \pi(w)\right) < k}.
\end{align*}
Note that the sum in the first term is strictly smaller than $t$. Similarly, the sum in the second term is strictly larger than $t$. As $k/n \to t$, both events are large deviation events for the binomial distribution. Hence, the probabilities decay exponentially fast in $n$. This proves $i)$. The proof of $ii)$ uses similar ideas and is thus omitted.
\end{proof}

\subsection{Law of large numbers for a Markov source}  \label{sec:lln}

The following proposition is at the heart of Theorem \ref{prop:mean} $i)$ and states a weaker version of Theorem \ref{thm_simple2}.

\begin{proposition} \label{prop:weaker}
\begin{enumerate} \item Let $(v(n))_{n\geq 1}$ be an arbitrary $\Sigma^\infty$-valued sequence. Then, almost surely and with convergence of all moments,
\begin{align*}
\frac 1 {\sqrt n}\left| Z_n(v(n))-n - \sum_{i=1}^n j(v(n),S_i)\right|\to 0.
\end{align*}
\item For $k \geq 1, v_1, \ldots, v_k \in \Sigma^\infty$, with the process $H$ defined in Theorem \ref{thm_simple2}, we have
$$\left(\frac{Z_n(v_1)-m(v_1)n}{\sqrt n}, \ldots, \frac{Z_n(v_k)-m(v_k)n}{\sqrt n}\right) \stackrel{d}{\longrightarrow} (H(v_1), \ldots, H(v_k)).$$

\item For all $v,w\in\Sigma^\infty$, we have
 \begin{align*}
\E[H(v)H(w)]=\sum_{k=1}^\infty k\wedge (j(v,w)) \left(\pi(v^{(k)})+\pi(w^{(k)})\right)-\sum_{k=1}^{j(v,w)} \pi(v^{(k)})-(m(v)-1)(m(w)-1).
\end{align*}
\end{enumerate}
\end{proposition}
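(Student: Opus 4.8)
The plan is to prove the three parts in order, since part $i)$ reduces the study of $Z_n(v(n))$ to the simpler additive functional $\sum_{i=1}^n j(v(n), S_i)$, part $ii)$ is then a routine central limit theorem for i.i.d.\ summands, and part $iii)$ is an explicit (if slightly tedious) computation of the limiting covariance.

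\emph{Part i).} First I would rewrite $Z_n(v) - n - \sum_{i=1}^n j(v,S_i)$ in a usable form. Recall $\Lambda_{n,k}(v) = \#\{i : j(v,S_i) \geq k\}$, so $\sum_{k\geq 1}\Lambda_{n,k}(v) = \sum_{i=1}^n j(v,S_i)$ and $\sum_{k\geq 0}\Lambda_{n,k}(v) = n + \sum_{i=1}^n j(v,S_i)$ (since $\Lambda_{n,0}(v) = n$). Therefore
\begin{align*}
Z_n(v) - n - \sum_{i=1}^n j(v,S_i) &= \sum_{k\geq 0} \Lambda_{n,k}(v)\bigl(\mathbf 1_{\Lambda_{n,k}(v) > 1} - 1\bigr) = -\sum_{k\geq 0}\Lambda_{n,k}(v)\,\mathbf 1_{\Lambda_{n,k}(v) \leq 1}.
\end{align*}
The $\Lambda_{n,k}(v) = 0$ terms vanish, so what remains is $-\#\{k \geq 0 : \Lambda_{n,k}(v) = 1\}$, the number of levels at which exactly one string agrees with $v$. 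Writing $K_n(v)$ for this count, I must show $K_n(v(n))/\sqrt n \to 0$ a.s.\ and in all moments, uniformly enough over the sequence $v(n)$. The key bound is that $\mathbb P(\Lambda_{n,k}(v) = 1) \leq n\,r_k \leq n\,p_{\max}^{k-1}$ using \eqref{bound:exp}, and also $\mathbb P(\Lambda_{n,k}(v)\geq 1) \leq n r_k$; summing over $k$ gives $\EE{K_n(v)} \leq \sum_k n\,p_{\max}^{k-1} = O(n)$, which is not good enough directly. The refinement is that $K_n(v)$ is supported on a bounded window of levels: for $k$ small, $\Lambda_{n,k}(v)$ is large (of order $n\pi(v^{(k)})$) with overwhelming probability, while for $k$ large (say $k \geq C\log n$) we have $\Lambda_{n,k}(v) = 0$ with high probability by the union bound above. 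A second-moment / Borel–Cantelli argument on the "boundary" window $k \asymp \log n$, exploiting that $\Lambda_{n,k}(v)$ is Binomial$(n,\pi(v^{(k)}))$ and that crossing from "$\gg 1$" to "$=0$" happens in $O(\log\log n)$ consecutive levels, will show $K_n(v) = O(\log n)$ a.s.\ uniformly in $v$, hence $K_n(v(n))/\sqrt n \to 0$. For moment convergence one controls $\EE{K_n(v)^p}$ by the same windowing plus the elementary fact that each $\mathbf 1_{\Lambda_{n,k}(v)=1}$ has exponentially small probability outside the window. I expect this windowing/Borel–Cantelli step to be the main obstacle, as it is the only place where a genuine probabilistic estimate (rather than bookkeeping) is needed, and it must be made uniform over all sequences $v(n)$.

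\emph{Part ii).} Given part $i)$, it suffices to prove the CLT for the vector $\bigl(\tfrac{1}{\sqrt n}\sum_{i=1}^n (j(v_\ell,S_i) - \EE{j(v_\ell,S_1)})\bigr)_{\ell=1}^k$. The summands $\bigl(j(v_1,S_i),\dots,j(v_k,S_i)\bigr)$, $i\geq 1$, are i.i.d.\ $\R^k$-valued vectors, and they have finite moments of all orders because $\mathbb P(j(v,S_1)\geq m) \leq r_m \leq p_{\max}^{m-1}$ decays geometrically. Also $\EE{j(v,S_1)} = \sum_{m\geq 1}\mathbb P(j(v,S_1)\geq m) = \sum_{m\geq 1}\pi(v^{(m)}) = m(v) - 1$ by definition of $m$, so the centering $m(v_\ell)n$ (after adding back the $n$ and using part $i)$) matches. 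The multivariate Lindeberg CLT then gives convergence to a centered Gaussian vector with covariance $\CV(j(v,S_1), j(w,S_1))$, which is exactly $\EE{H(v)H(w)}$ by the definition of $H$ in Theorem~\ref{thm_simple2}. This step is essentially immediate.

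\emph{Part iii).} Here I just compute $\CV(j(v,S_1),j(w,S_1))$ explicitly. Using $j(v,S_1) = \sum_{k\geq 1}\mathbf 1_{j(v,S_1)\geq k}$ and similarly for $w$, bilinearity gives
\begin{align*}
\EE{j(v,S_1)j(w,S_1)} = \sum_{k,\ell \geq 1}\mathbb P\bigl(j(v,S_1)\geq k,\ j(w,S_1)\geq \ell\bigr).
\end{align*}
The joint event $\{j(v,S_1)\geq k\}\cap\{j(w,S_1)\geq \ell\}$ forces $S_1$ to start with a prefix compatible with both $v^{(k)}$ and $w^{(\ell)}$; since $v$ and $w$ agree exactly up to level $j(v,w)$, this event is empty when $\min(k,\ell) > j(v,w)$, and otherwise has probability $\pi(v^{(k\vee\ell)})$ if, say, $k\geq\ell$ (because then the longer prefix $v^{(k)}$ determines everything and is consistent with $w^{(\ell)}$ as $\ell\leq j(v,w)$). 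Summing: for each $k$ the values $\ell = 1,\dots,k\wedge j(v,w)$ contribute $\pi(v^{(k)})$ each when $k \geq \ell$, and symmetrically, which after reorganizing yields $\sum_{k\geq 1}(k\wedge j(v,w))\bigl(\pi(v^{(k)}) + \pi(w^{(k)})\bigr) - \sum_{k=1}^{j(v,w)}\pi(v^{(k)})$ — the last (correction) sum removes the double-counted diagonal-type terms where $k = \ell \leq j(v,w)$ and $\pi(v^{(k)}) = \pi(w^{(k)})$. Finally subtracting $\EE{j(v,S_1)}\EE{j(w,S_1)} = (m(v)-1)(m(w)-1)$ gives the claimed formula. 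The only care needed is the careful case analysis $k<\ell$ vs.\ $k\geq\ell$ vs.\ $\min(k,\ell) \leq j(v,w)$ to get the correction term right; this is routine. As a sanity check, setting $v = w$ recovers $\V(j(v,S_1)) = \sum_{k\geq 1}(2k-1)\pi(v^{(k)}) - (m(v)-1)^2$, and specializing to the uniform model (where $\pi(v^{(k)}) = b^{-k}$) should reproduce the closed form stated in Theorem~\ref{thm_simple2}.
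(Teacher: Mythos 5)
Parts ii) and iii) match the paper's proof. For part ii), both reduce to the i.i.d.\ multivariate CLT via part i) and Slutsky's lemma, and for part iii) the paper also computes $\E[XY] = \sum_{k,\ell}\Prob(X\geq k, Y\geq \ell)$ with $X = j(S_1,v)$, $Y = j(S_1,w)$; you organize the double sum by the cases $\max(k,\ell)$ versus $\min(k,\ell)$ whereas the paper splits the outer sum over $k$ at $j(v,w)$ — either bookkeeping yields the stated formula.

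Your part i) arrives at the same identity $\left| Z_n(v) - n - \sum_i j(v,S_i)\right| = \#\{k\geq 0 : \Lambda_{n,k}(v) = 1\} =: K_n(v)$, but your proposed windowing/Borel--Cantelli argument on the boundary levels $k\asymp\log n$, with the $O(\log\log n)$ crossing-width estimate, is substantially more elaborate than needed and the stated plan is somewhat vague about how uniformity over $v(n)$ would be obtained. The paper's route is much shorter: since $\Lambda_{n,k}(v)$ is non-increasing in $k$ and vanishes for $k > R_n := \max_{i\leq n} j(v(n),S_i)$, one has $K_n(v(n)) \leq R_n$ directly, and the union bound gives $\Prob(R_n \geq k) \leq n\,r_k \leq n\,p_{\max}^{k-1}$, from which $R_n/\sqrt n \to 0$ almost surely (Borel--Cantelli with $k = C\log n$) and in every $L^p$ (geometric tail), uniformly over the choice of $v(n)$. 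This bypasses any analysis of where the transition $\Lambda_{n,k} \gg 1 \rightsquigarrow \Lambda_{n,k}=0$ occurs, and in particular you never need to control $\Lambda_{n,k}$ for small $k$ at all. I'd recommend replacing the windowing plan with this one observation; the rest of your argument is sound.
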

\begin{proof}
Note that $\sum_{k\geq 0}\Lambda_{n,k}(v)= n +  \sum_{i=1}^n j(v,S_i)$ for every $v\in\Sigma^\infty$. Consequently,
\begin{align*}
\left|Z_n(v(n)) -n - \sum_{i=1}^n j(v(n),S_i)\right| = \# \{k\geq 0 : \Lambda_{n,k}(v(n))=1\}.
\end{align*}
The right hand side is bounded by $R_n:=\max_{i\leq n} j(v(n),S_i)$. Using \eqref{bound:exp}, the union bound reveals
\begin{align*}
\Prob(R_n\geq k)\leq n r_k \leq  n p_{\max}^{k-1}.
\end{align*}
Hence $ R_n / \sqrt n  \to 0$ almost surely and with convergence of moments concluding the proof of $i)$.
$ii)$ follows from $i)$, Slutzky's lemma and the multivariate central limit theorem.
For $iii)$, set $X=j(S_1,v)$ and $Y=j(S_1,w)$. Then
 \begin{align*}
\E[XY]&=\sum_{k=1}^\infty \sum_{\ell=1}^\infty \Prob(X\geq k, Y\geq \ell)\\
&=\sum_{k=1}^{j(v,w)}\left( k\Prob(X\geq k ) +\sum_{\ell=k+1}^\infty \Prob(Y\geq \ell)\right) + \sum_{k=j(v,w)+1}^\infty j(v,w) \Prob(X\geq k)\\
&=\sum_{k=1}^\infty k\wedge (j(v,w)) \Prob(X\geq k) + \sum_{k=1}^\infty k\wedge (j(v,w)) \Prob(Y\geq k)- \sum_{k=1}^{j(v,w)} \Prob(Y\geq k)\\
&=\sum_{k=1}^\infty k\wedge (j(v,w)) \left(\pi(v^{(k)})+\pi(w^{(k)})\right)-\sum_{k=1}^{j(v,w)} \pi(v^{(k)}).
\end{align*}
Hence, since $\E[X]\E[Y]=(m(v)-1)(m(w)-1)$,
\begin{align*}
\E[H(v)H(w)]=\sum_{k=1}^\infty k\wedge (j(v,w)) \left(\pi(v^{(k)})+\pi(w^{(k)})\right)-\sum_{k=1}^{j(v,w)} \pi(v^{(k)})-(m(v)-1)(m(w)-1).
\end{align*}
This concludes the proof. \end{proof}
We can now give the proof of Theorem \ref{prop:mean} $i)$.
\begin{proof}[Proof of Theorem \ref{prop:mean} i)]
By Proposition \ref{prop:weaker} $i)$ it is sufficient to show that \begin{align*}
\frac{\sum_{i=1}^n j(v(n),S_i)} n \to m(v)-1.
\end{align*}
Since $m(v(n))\rightarrow m(v)$, the convergence above is equivalent to
\begin{align}\label{zz_coro_LLN}
\frac{\sum_{i=1}^n (j(v(n),S_i)-(m(v(n))-1))} n \to 0.
\end{align}
Note that $\E[j(v(n),S_1)]=m(v(n))-1$. Thus, the almost sure convergence follows from a suitable version of the strong law of large numbers for row-wise independent triangular arrays; cf.~\cite{HMT89}. The moment convergence follows from the almost sure convergence and the fact that the moments of the sequence in \eqref{zz_coro_LLN} are uniformly bounded.
The latter is an immediate consequence of \eqref{bound:exp}.
\end{proof}

The proof of Theorem \ref{prop:mean} $ii)$ relies on a simple tail bound for the height of the associated trie.

\begin{lemma}
Let $\mathcal H_n = \max \{k \geq 1 : \Lambda_{n,k}(v) > 1 \text{ for some } v \in \Sigma^* \}$.  Then, for all $n \geq 1$,
$$\Prob \{\mathcal H_n \geq k \} \leq n^2 p_{\max}^{k-1}, \quad k \geq 1.$$
\end{lemma}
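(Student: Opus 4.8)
The plan is to bound the height $\mathcal H_n$ by controlling, for each level $k$, the number of pairs of strings among $S_1, \ldots, S_n$ that agree on their first $k$ symbols. First I would observe that the event $\{\mathcal H_n \geq k\}$ means there exists some $v \in \Sigma^*$ of length $k$ with $\Lambda_{n,k}(v) > 1$, i.e., at least two of the $n$ strings share the prefix $v$. Equivalently, this says $\max_{i \neq j} j(S_i, S_j) \geq k$, so that
\begin{align*}
\{\mathcal H_n \geq k\} = \Big\{ \bigcup_{1 \leq i < j \leq n} \{j(S_i, S_j) \geq k\} \Big\}.
\end{align*}
The natural tool is then the union bound over the $\binom{n}{2} \leq n^2/2 \leq n^2$ pairs.

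The key step is to estimate $\Prob\{j(S_i, S_j) \geq k\}$ for a fixed pair $i \neq j$. Conditioning on the value $w = (w_1, \ldots, w_k) \in \Sigma^k$ of the first $k$ symbols of $S_i$, the probability that $S_j$ (independent of $S_i$) starts with that same prefix is exactly $\pi(w)$ (with the appropriate initial distribution $\mu$). Hence
\begin{align*}
\Prob\{j(S_i, S_j) \geq k\} = \sum_{w \in \Sigma^k} \pi(w)^2 \leq \Big(\max_{w \in \Sigma^k} \pi(w)\Big) \sum_{w \in \Sigma^k} \pi(w) = \max_{w \in \Sigma^k} \pi(w) \leq r_k \leq p_{\max}^{k-1},
\end{align*}
where the last two inequalities invoke \eqref{bound:exp}. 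Combining this with the union bound over the (at most $n^2$) pairs gives $\Prob\{\mathcal H_n \geq k\} \leq n^2 p_{\max}^{k-1}$, as claimed. I would write $\binom{n}{2} \le n^2$ to match the stated constant exactly rather than optimizing it.

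There is essentially no serious obstacle here; the only point requiring a little care is the identification of the event $\{\mathcal H_n \geq k\}$ with the existence of a coincident pair of strings at depth $k$. One should note that $\Lambda_{n,k}(v) > 1$ for some $v \in \Sigma^*$ forces $v$ to have length exactly $k$ in the relevant reading (or, more precisely, that the supremum over $k$ of levels with a multiply-occupied bucket is the maximal pairwise coincidence), and conversely a coincident pair at depth $\geq k$ produces a bucket at level $k$ containing at least two strings. Once this combinatorial translation is in place, the estimate is the two-line union-bound computation above, and the moment bounds or almost-sure statements that this lemma feeds into (e.g.\ in the proof of Theorem~\ref{prop:mean} $ii)$) follow since $p_{\max} < 1$ makes the tail summable after the polynomial factor is absorbed.
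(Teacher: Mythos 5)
Your proof is correct and follows essentially the same route as the paper: the union bound over the at most $n^2$ pairs of strings combined with the estimate $\Prob\{j(S_i,S_j) \geq k\} \leq p_{\max}^{k-1}$ coming from \eqref{bound:exp}. The paper phrases the pairwise bound by conditioning on $S_1$ and invoking the uniformity of \eqref{bound:exp}, whereas you explicitly write $\sum_{w\in\Sigma^k}\pi(w)^2 \le \max_w\pi(w)$, but these are the same computation.
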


\begin{proof}
As the bound \eqref{bound:exp} is uniform over all initial distributions, we have $$\Prob \left\{j(S_1, S_2) \geq n \right\} \leq p_{\max}^{n-1},  \quad n \geq 1.$$
The assertion follows from $\Prob \{\mathcal H_n \geq k \} \leq n^2 \Prob \left\{j(S_1, S_2) \geq k \right\}$.
\end{proof}

\begin{proof}[Proof of Theorem  \ref{prop:mean} ii)]
Let $k/n \to t \notin F(\Sigma_0^\infty)$ and $v = h(t)$.  Abbreviate $x_+ := x \mathbf 1_{x > 1}$ for $x \geq 0$. As $j(S_{(k)}, v) \to \infty$ almost surely by Lemma \ref{lem:asyS} $i)$, we need to prove a version of Theorem \ref{prop:mean} $i)$ with a random sequence $v(n)$. This relies on the concentration of the binomial distribution. To be more precise, in order to show the claimed almost sure convergence, using the Borel-Cantelli lemma, it suffices to verify that, for any $\varepsilon > 0$, we have
$$\sum_{n\geq 1} \Prob \left\{ \left| \sum_{\ell=1}^\infty n^{-1}  \Lambda_{n,\ell}(S_{(k)})_+ - \pi(v^{(\ell)}) \right| \geq \varepsilon \right\} < \infty.$$
By Lemma \ref{lem:asyS} $i)$ it is further  sufficient to prove that, for any $\varepsilon > 0$, there exists $n_0 \in \N$ such that, for all $K \geq n_0$,
$$\sum_{n\geq 1} \Prob \left\{ \left| \sum_{\ell=K}^\infty n^{-1} \Lambda_{n,\ell}(S_{(k)})_+ -   \pi(v^{(\ell)}) \right|  \geq \varepsilon, j(v,
S_{(k)}) \geq K \right\} < \infty.$$
From the tail bound in the previous lemma we deduce that it is further enough to show that, for any $\varepsilon > 0, L > 3 / \log(1 / p_{\max})$,  there exists $n_0 \in \N$ such that, for any $K \geq n_0$,
$$\sum_{n\geq 1} \Prob \left\{ \left| \sum_{\ell=K}^{L \log n} n^{-1}  \Lambda_{n,\ell}(S_{(k)})_+ -   \pi(v^{(\ell)}) \right|  \geq \varepsilon, j(v, S_{(k)}) \geq K \right\} < \infty.$$
Let $A_{K,L} \subseteq \Sigma^{L \log n}$  be the set of vectors with prefix $v^{(K)}$. Then, the last claim follows from verifying that, for any $\varepsilon > 0, L > 3 / \log(1 / p_{\max})$, there exists $n_0 \in \N$ such that, for all $K \geq n_0$,
\begin{align} \label{oneb} \sum_{n\geq 1} \Prob \left\{ \sup_{w \in A_{K,L}} \left|   \sum_{\ell=K}^{L \log n}  n^{-1} \Lambda_{n,\ell}(w)_+ -   \pi(w^{(\ell)}) \right| \geq \varepsilon  \right\} < \infty.\end{align}
For $w \in A_{K,L}$ the union bound gives
\begin{align*}
\Prob \Bigg( \Bigg|   \sum_{\ell=K}^{L \log n} n^{-1}  \Lambda_{n,\ell}(w)_+ -  & \sum_{\ell=K}^{\infty} \pi(w^{(\ell)}) \Bigg| \geq \varepsilon \Bigg) \\
& \leq L \log n \sup_{0 \leq \ell \leq L \log n} \ProbB{\left|    \Lambda_{n,\ell}(w)_+ -  n \pi(w^{(\ell)}) \right| \geq \frac{\varepsilon n}{L \log n}}.
\end{align*}
Upon choosing $K$ sufficiently large (e.g., such that $\pi(v^{(K)}) < \varepsilon /2$), standard Chernoff bounds reveal that the right hand side decays exponentially fast in $n$ uniformly in the choice of $w$. Thus, as $A_{K,L}$ contains at most $n^{L \log b}$ many vectors, the union bound concludes the proof of \eqref{oneb}. Convergence of moments follows from
Theorem \ref{thm:mainsup}. (The proof of Theorem \ref{thm:mainsup} does not rely on the statement of Theorem \ref{prop:mean}.)
\end{proof}

The next result treats the situation for values $t \in F(\Sigma^\infty_0)$.

\begin{proposition} \label{boundarycase}
Let  $k = k(n) \in \{1, \ldots, n\}$ with $k/n \to t \in F(\Sigma^\infty_0)$.
\begin{enumerate}
\item If $\sqrt{n} (k/n - t)  \to \infty$, then $n^{-1} Y_n(k) \to m \circ h(t)$ in probability.
\item If $\sqrt{n} (k/n - t)  \to -\infty$ , then $n^{-1} Y_n(k) \to m \circ h(t-)$ in probability.
\item If $\sqrt{n} |k/n - t|  \to \beta \in (-\infty, \infty)$, then $$n^{-1} Y_n(k) \stackrel{d}{\longrightarrow}   \Phi \left(\beta / (\sqrt{t(1-t)}\right)\delta_{m \circ h(t)} + \left(1- \Phi \left(\beta / (\sqrt{t(1-t)}\right)\right)\delta_{m \circ h(t-)}, $$ where $\Phi$ denotes the distribution function of a standard Gaussian random variable.
\end{enumerate}
\end{proposition}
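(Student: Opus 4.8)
The plan is to reduce the statement to a binomial dichotomy for the location of $S_{(k)}$ relative to the jump of $h$ at $t$, combined with a conditional version of the law of large numbers in Theorem~\ref{prop:mean}~$ii)$. First I would fix $t\in F(\Sigma_0^\infty)$ and write $t=F(w)$ with $w=u_1\ldots u_\ell 00\ldots\in\Sigma_0^\infty$, $u_\ell\neq 0$, so that $w-=u_1\ldots u_{\ell-1}(u_\ell-1)(b-1)(b-1)\ldots$; by Proposition~\ref{prop:elem}~$i)$ and the description of $h$ in Section~\ref{sec:prel} one has $F(w)=F(w-)=t$, $h(t)=w$, $h(t-)=w-$, and hence $m\circ h(t)=m(w)$, $m\circ h(t-)=m(w-)$. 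Then I would set $N_n:=\#\{1\le i\le n:S_i<w\}$. Since the source has no atoms and no string lies strictly between $w-$ and $w$, one has $N_n=\#\{i:S_i\le w-\}$ almost surely, so $\{N_n<k\}\subseteq\{S_{(k)}\ge w\}$ and $\{N_n\ge k\}\subseteq\{S_{(k)}\le w-\}$, while $N_n$ has the $\mathrm{Bin}(n,t)$ distribution. The elementary but decisive observation is that a string $\ge w$ shares at most the prefix $u_1\ldots u_{\ell-1}$ with $w-$, and a string $\le w-$ at most that prefix with $w$; therefore
\[
j(S_{(k)},w-)\le \ell-1 \ \text{ on } \{N_n<k\},\qquad j(S_{(k)},w)\le \ell-1 \ \text{ on } \{N_n\ge k\}.
\]

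The heart of the argument, and what I expect to be the main obstacle, is a \emph{conditional} law of large numbers: for every $\varepsilon>0$, $\ProbB{|n^{-1}Y_n(k)-m(w)|>\varepsilon,\ N_n<k}\to 0$ and, symmetrically, $\ProbB{|n^{-1}Y_n(k)-m(w-)|>\varepsilon,\ N_n\ge k}\to 0$. I would prove the first of these (the second being obtained by interchanging the roles of $w$ and $w-$ and of $<$ and $\ge$) by re-running the proof of Theorem~\ref{prop:mean}~$ii)$ essentially verbatim, with $w$ in the role played by $h(t)$ there; recall that $Y_n(k)=Z_n(S_{(k)})=\sum_{\ell\ge 0}\Lambda_{n,\ell}(S_{(k)})\Ind{\Lambda_{n,\ell}(S_{(k)})>1}$ and $m(w)=\sum_{\ell\ge 0}\pi(w^{(\ell)})$. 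The single place where the hypothesis $t\notin F(\Sigma_0^\infty)$ enters that proof is the appeal to Lemma~\ref{lem:asyS}~$i)$, namely the summability in $n$ of $\ProbB{j(S_{(k)},w)\le M}$. Here I would replace this by the bound, valid for every fixed $M\ge\ell-1$,
\[
\ProbB{j(S_{(k)},w)\le M,\ N_n<k}\ \le\ \ProbB{\max\{j(S_{(k)},w),\,j(S_{(k)},w-)\}\le M},
\]
whose right-hand side is summable in $n$ by Lemma~\ref{lem:asyS}~$ii)$, hence tends to $0$; this is exactly the step that uses $j(S_{(k)},w-)\le\ell-1$ on $\{N_n<k\}$. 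All remaining ingredients of the proof of Theorem~\ref{prop:mean}~$ii)$ — the tail bound $\ProbB{\mathcal H_n\ge k}\le n^2 p_{\max}^{\,k-1}$ for the trie height, the Chernoff estimates for $\Lambda_{n,\ell}(w)$, and the union bound over the polynomially many strings with fixed prefix $w^{(K)}$ — do not involve $N_n$, so intersecting the relevant events with $\{N_n<k\}$ only decreases probabilities, and assembling the estimates as in that proof gives the conditional law of large numbers.

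Finally I would combine this with the asymptotics of $\ProbB{N_n<k}$ for $N_n\sim\mathrm{Bin}(n,t)$. The conditional law of large numbers yields $n^{-1}Y_n(k)=m(w)\Ind{N_n<k}+m(w-)\Ind{N_n\ge k}+o_{\Prob}(1)$, so by Slutsky's lemma it remains to find the limit in distribution of the two-point variable $m(w)\Ind{N_n<k}+m(w-)\Ind{N_n\ge k}$, which is determined by $\ProbB{N_n<k}$. In case $i)$, Chebyshev's inequality gives $\ProbB{N_n\ge k}\le nt(1-t)/(k-nt)^2=t(1-t)/\big(n(k/n-t)^2\big)\to 0$, hence $\ProbB{N_n<k}\to 1$ and $n^{-1}Y_n(k)\to m(w)=m\circ h(t)$ in probability; case $ii)$ is the mirror image, with $\ProbB{N_n<k}\to 0$ and limit $m(w-)=m\circ h(t-)$. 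In case $iii)$ (reading the hypothesis as $\sqrt n(k/n-t)\to\beta\in\R$), the central limit theorem for $N_n$ — there is no lattice obstruction since $\max_j\ProbB{N_n=j}=O(n^{-1/2})$ — together with $(k-1-nt)/\sqrt{nt(1-t)}\to\beta/\sqrt{t(1-t)}$ gives $\ProbB{N_n<k}\to p:=\Phi\big(\beta/\sqrt{t(1-t)}\big)$, so that $n^{-1}Y_n(k)\stackrel{d}{\longrightarrow}p\,\delta_{m\circ h(t)}+(1-p)\,\delta_{m\circ h(t-)}$, as claimed.
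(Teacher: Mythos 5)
Your proposal is correct and follows essentially the same route as the paper: both proofs split on whether $S_{(k)}$ lands left or right of the jump of $h$ at $t$ (your $\{N_n<k\}$ versus the paper's event $\{S_{(k)}^{(\ell)}=v^{(\ell)}\}$, which coincide up to a vanishing probability), both use the observation that on each side one of $j(S_{(k)},w)$, $j(S_{(k)},w-)$ is capped at $\ell-1$ so that Lemma~\ref{lem:asyS}~$ii)$ replaces part~$i)$ in the argument for Theorem~\ref{prop:mean}~$ii)$, and both reduce the mixing probability to a $\mathrm{Bin}(n,t)$ central limit theorem. The only difference is cosmetic: you make the conditional law of large numbers explicit, whereas the paper states the corresponding almost-sure statements multiplied by indicators.
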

\begin{proof} The proof proceeds along the lines of the previous one. Let $k/n \to t \in F(\Sigma_0^\infty)$ and $v = h(t)$. Note that
$v \in   \Sigma^\infty_0$. Let $\ell \geq 1$ be maximal with $v_\ell \neq 0$.
By Lemma \ref{lem:asyS}, for any fixed $K > \ell$, almost surely,
$$\mathbf 1_{S_{(k)}^{(\ell)} = v^{(\ell)}} \left| n^{-1} \sum_{r=1}^{K} \Lambda_{n,r}(S_{(k)})_+  -  \sum_{r=1}^{\ell-1} \pi(v^{(r)}) - \sum_{r=\ell}^K \pi(v^{(r)})\right| \to 0,$$
and
$$\mathbf 1_{S_{(k)}^{(\ell)} =  v^{(\ell-1)}(v_\ell-1)} \left| n^{-1} \sum_{r=1}^{K} \Lambda_{n,r}(S_{(k)})_+  -  \sum_{r=1}^{\ell-1} \pi(v^{(r)}) - \sum_{r=\ell}^K \pi(v-^{(r)})\right| \to 0.$$
As in the previous proof,  this implies
$$ \frac{Y_n(k)}{n} -  \mathbf 1_{S_{(k)}^{(\ell)} = v^{(\ell)}} m(v) - \mathbf 1_{S_{(k)}^{(\ell)} =
 v^{(\ell-1)}(v_\ell-1)} m(v-) \to 0.$$
It remains to verify that, in case $i)$, we have $\Prob \{S_{(k)}^{(\ell)} = v^{(\ell)} \} \to 1$, in case $ii)$, we have
$\Prob \{S_{(k)}^{(\ell)} = v^{(\ell)} \} \to 0$, and, in case $iii)$, $\Prob \{S_{(k)}^{(\ell)} = v^{(\ell)} \} \to \Phi(\beta / (\sqrt{t(1-t)})$.  From Lemma \ref{lem:asyS} $ii)$ it follows that there exists a non-negative sequence $\varepsilon_n, n \geq 1$ with $\varepsilon_n \to 0$, such that
\begin{align*}
\Prob \{S_{(k)}^{(\ell)} =  v^{(\ell-1)}(v_\ell-1)\} & =  \Prob \{S_{(k)}^{(\ell)} \leq  v^{(\ell-1)}(v_\ell-1)(b-1)\ldots\} - \varepsilon_n \\
& = \Prob \left \{\sum_{w \leq v-, w \in \Sigma^\ell} \Lambda_{n, \ell}(w) \geq k \right \} - \varepsilon_n\\
& = \Prob \{ \text{Bin}(n, t) \geq k \} - \varepsilon_n \\
& = \Prob \left \{ \frac{\text{Bin}(n, t) - n t}{\sqrt{n}}
\geq \frac{k - nt}{\sqrt n} \right\} - \varepsilon_n.
\end{align*}
The statements follow immediately from the central limit theorem.
\end{proof}

The last two proofs only relied on concentration bounds for the binomial distribution and the fact that $\pi(v)$ decays fast enough as the length of $v$ increases. Thus, they can easily be extended to general sources. To this end, following \cite[Definition 3]{vaclfifl3}, we call a probabilistic source $\Pi$-{\em tame} with parameter $\gamma > 0$ if, for some $C > 0$,
\begin{align*}
\max_{v \in \Sigma^k} \pi(v) = \max_{v \in \Sigma^k} \Prob\{v \preceq S_1\} \leq C k^{-\gamma}, \quad k \geq 1. \end{align*}
The arguments in the previous proofs generalize straightforwardly to $\Pi$-tame sources with parameter $\gamma > 2$.
\begin{korollar} \label{gensource}
Consider Radix Selection on a general source over the alphabet $\Sigma$.
\begin{enumerate}
\item The convergences in Theorem \ref{prop:mean} hold correspondingly almost surely and in mean, if the source is $\Pi$-tame for some $\gamma > 2$. The same is true for the multivariate central limit theorem in Proposition \ref{prop:weaker} $ii)$.
\item If the source is $\Pi$-tame for all $\gamma > 0$ then the convergences in Theorem \ref{prop:mean} are with respect to all moments.
\end{enumerate}
\end{korollar}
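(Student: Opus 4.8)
The plan is to re-run the proofs of Theorem \ref{prop:mean}, Proposition \ref{boundarycase}, and Proposition \ref{prop:weaker} $ii)$, replacing each appeal to the exponential bound \eqref{bound:exp} (which used $0 < p_{ij} < 1$ strongly) by the corresponding polynomial bound coming from $\Pi$-tameness, and then checking that the Borel--Cantelli sums still converge. First I would record the key consequence of $\Pi$-tameness with parameter $\gamma$: since $\Prob\{v \preceq S_1\} \leq C k^{-\gamma}$ for $v \in \Sigma^k$, we get for two independent strings
\[
\Prob\{j(S_1, S_2) \geq k\} \leq \sum_{v \in \Sigma^k} \Prob\{v \preceq S_1\}^2 \leq C k^{-\gamma} \sum_{v \in \Sigma^k} \Prob\{v \preceq S_1\} = C k^{-\gamma},
\]
and similarly $\Prob\{\mathcal H_n \geq k\} \leq n^2 C k^{-\gamma}$ for the trie height. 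Also $\E[j(v, S_1)] = \sum_{k \geq 1} \Prob\{v^{(k)} \preceq S_1\} \leq C \sum_{k \geq 1} k^{-\gamma} < \infty$ when $\gamma > 1$, and all moments of $j(v, S_1)$ are finite when $\gamma > 1$ (the tails are summable against any polynomial weight). This gives $m(v) < \infty$ and justifies the truncation arguments.

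For part $i)$ with $\gamma > 2$: in the proof of Theorem \ref{prop:mean} $i)$, the crucial quantity $R_n = \max_{i \leq n} j(v(n), S_i)$ now satisfies $\Prob(R_n \geq k) \leq n C k^{-\gamma}$, so $\Prob(R_n \geq \sqrt n) \leq C n^{1-\gamma/2}$, which is summable precisely when $\gamma > 4$ — here I would instead use a slightly weaker bound, noting $R_n / \sqrt n \to 0$ in probability suffices for the $L_1$-version and, for a.s.\ convergence, use $\Prob(R_n \geq n^\varepsilon) \leq C n^{1 - \varepsilon \gamma}$ which is summable for $\varepsilon > 2/\gamma$, so $R_n = o(n^\varepsilon)$ a.s.\ for any such $\varepsilon < 1/2$, still giving $R_n / \sqrt n \to 0$ a.s.; and the SLLN for triangular arrays from \cite{HMT89} applies verbatim since the summands $j(v(n), S_i) - \E[j(v(n), S_1)]$ are uniformly $L_2$-bounded (indeed uniformly bounded in every $L_p$) by $\Pi$-tameness with $\gamma > 2$. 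For Theorem \ref{prop:mean} $ii)$ one repeats the Borel--Cantelli/Chernoff scheme of its proof: Lemma \ref{lem:asyS} is a pure large-deviations statement about the binomial and does not use the Markov assumption, so $S_{(k)} \to h(t)$ still holds; the tail bound on $\mathcal H_n$ is now polynomial, so the truncation to levels $\ell \leq L \log n$ is replaced by truncation to $\ell \leq n^\delta$ for suitable small $\delta$ (using $\Prob(\mathcal H_n \geq n^\delta) \leq C n^{2 - \delta\gamma}$, summable for $\delta > 3/\gamma$, which is $< 1/2$ when $\gamma > 6$ — again one can be more economical, e.g.\ it is enough to have a summable bound, and the number of candidate prefixes $|A_{K,L}|$ is at most $b^{n^\delta}$, which is dominated by the Chernoff decay $e^{-c n^{1-2\delta}}$ as long as $\delta < 1/2$); and the Chernoff estimate on $|\Lambda_{n,\ell}(w)_+ - n\pi(w^{(\ell)})|$ is unchanged. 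Proposition \ref{boundarycase} carries over the same way since its proof only used Lemma \ref{lem:asyS} and the CLT for the binomial. Proposition \ref{prop:weaker} $ii)$ follows from Proposition \ref{prop:weaker} $i)$ (whose proof is exactly the $R_n/\sqrt n \to 0$ argument above) together with the multivariate Lindeberg CLT for the triangular array $(j(v_1, S_i), \ldots, j(v_k, S_i))_{i \leq n}$, whose covariances converge because the per-coordinate variances and covariances are finite and the array is uniformly $L_{2+\epsilon}$-bounded under $\gamma > 2$.

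For part $ii)$: if the source is $\Pi$-tame for every $\gamma > 0$, then $j(v, S_1)$ has tails lighter than any polynomial, hence finite exponential-type moments up to the cutoff forced by the $b^k$ prefixes, and in particular $\sup_n \E\big[|\sum_{i=1}^n (j(v(n),S_i) - \E[\cdot])/\sqrt n|^p\big] < \infty$ for every $p$; combined with the a.s.\ (equivalently, in-probability) convergence already established and uniform integrability of all powers, this upgrades the convergences in Theorem \ref{prop:mean} to convergence of all moments, exactly as in the original proofs which deferred moment convergence to Theorem \ref{thm:mainsup} — here one would instead give the direct uniform-integrability argument. The main obstacle is bookkeeping rather than conceptual: one must choose the truncation exponent $\delta$ (replacing the $\log n$ scale) and verify in each of the three proofs that the resulting Borel--Cantelli series converges for all $\gamma$ above the stated threshold, taking care that the combinatorial factor $b^{n^\delta}$ or $n^{L \log b}$ is always beaten by the Chernoff decay; once one observes that every use of $p_{\max}^{k}$ in the original arguments can be replaced by $C k^{-\gamma}$ (or, after summing, by a polynomial in $n$) without affecting summability when $\gamma$ is large enough, the extension is routine, which is why we state it as a corollary without full details.
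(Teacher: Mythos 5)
Your overall plan matches the paper's own (unwritten) argument: re-run the proofs of Theorem \ref{prop:mean}, Proposition \ref{boundarycase} and Proposition \ref{prop:weaker}~$ii)$, replacing the exponential bound \eqref{bound:exp} by the polynomial $\Pi$-tameness bound, and re-check the Borel--Cantelli sums. The key consequence you record --- $\Prob\{j(S_1, S_2) \geq k\} \leq \sum_{v\in\Sigma^k} \pi(v)^2 \leq C k^{-\gamma}$ --- is exactly the right replacement for \eqref{bound:exp}. However, there are a few genuine issues in your bookkeeping.

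First, a factual error: you write ``all moments of $j(v, S_1)$ are finite when $\gamma > 1$ (the tails are summable against any polynomial weight).'' This is false. Tails of order $k^{-\gamma}$ give $\E[j(v,S_1)^p] < \infty$ only for $p < \gamma$; for $p \geq \gamma$ the sum $\sum_k k^{p-1} k^{-\gamma}$ diverges. Under $\gamma > 2$ you get a finite second moment (and, picking $\epsilon < \gamma - 2$, a finite $(2+\epsilon)$-th moment, which is what the Lindeberg CLT needs), but not all moments. Part~$ii)$ of the corollary, where all $\gamma > 0$ are assumed, is precisely what rescues ``all moments.''

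Second, for part~$i)$ you aim at $R_n / \sqrt{n} \to 0$ almost surely and correctly observe that the naive bound requires $\gamma > 4$; your proposed fix --- $\varepsilon > 2/\gamma$ and $\varepsilon < 1/2$ --- still requires $\gamma > 4$, so the gap is not closed. The resolution is that almost sure convergence of $R_n/\sqrt n$ is not what is needed: Theorem \ref{prop:mean}~$i)$ normalizes by $n$, so only $R_n/n \to 0$ a.s.\ is required, and here $\varepsilon < 1$ suffices, giving the threshold $\gamma > 2$; Proposition \ref{prop:weaker}~$ii)$ normalizes by $\sqrt n$ but only needs $R_n/\sqrt n \to 0$ \emph{in probability} (via Slutsky), which is again a $\gamma > 2$ statement. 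Conflating the $n$- and $\sqrt n$-normalizations, and the a.s.\ and in-probability modes, is what makes your threshold drift upward.

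Third, for the a.s.\ convergence in Theorem \ref{prop:mean}~$ii)$, replacing the $L \log n$ truncation by $n^\delta$ does not obviously work at $\gamma > 2$: the trie-height Borel--Cantelli requires $\delta > 3/\gamma$, the per-level Chernoff decay $e^{-cn^{1-2\delta}}$ requires $\delta < 1/2$, and once one accounts for the entropy factor $|A_{K,L}| \approx b^{n^\delta}$ the Chernoff decay only dominates for $\delta < 1/3$ --- together forcing $\gamma > 9$, well above the stated $\gamma > 2$. You acknowledge this (``one can be more economical'') but do not carry out the more economical argument. This is the one place where the ``routine bookkeeping'' claim hides a real subtlety, and a sharper decomposition (e.g., splitting $Z_n(S_{(k)}) = Z_n(h(t)) + (Z_n(S_{(k)}) - Z_n(h(t)))$, controlling the second term by $\mathcal H_n \cdot \Lambda_{n,K}(h(t))$ for a slowly growing $K$, and exploiting the tail bound on $\pi(v^{(K)})$) seems to be required rather than a direct transcription of the paper's union bound. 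To be fair, the paper itself gives no proof and merely asserts the extension is straightforward, so you are matching the paper's level of detail; but as written your explicit estimates visibly fail to reach $\gamma > 2$, and the proposal should either supply the improved truncation argument or flag this step as nontrivial.
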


It is plausible that the almost sure and mean convergence hold under a weaker tameness assumption. In this context, one should point out that the mean expansion \eqref{qsel} for the Quickselect complexity holds for $\Pi$-tame sources with $\gamma > 1$ \cite[Theorem 2]{vaclfifl3}. Note however, that the multivariate central limit theorem in Proposition \ref{prop:weaker} $ii)$ requires that, for any $v \in \Sigma^\infty$, we have $\pi(v^{(n)}) = O(n^{-2})$  since, otherwise, the variance of $H(v)$ is infinite.

\subsection{Functional limit theorem for a Markov source} \label{sec:uniform}
For a refined asymptotic analysis we normalize the process in space and consider
$X_n=(X_n(v))_{v \in \Sigma^\infty}$ defined  by
\begin{align}\label{def:proc}
 X_n(v):= \frac{Z_n(v)-m(v) n}{\sqrt{n}}, \quad n \geq 1.
\end{align}
Note that we have already proved convergence of the marginal distributions of this process in Proposition \ref{prop:weaker} $ii)$.
We write $Z_n^{(r)}, r \in \Sigma$ ($X_n^{(r)}, r \in \Sigma$ respectively) for the process $Z_n$ ($X_n$ respectively) when the initial distribution is chosen as $\mu^{r}$ defined in \eqref{rini}. We now outline the ideas of the proof of Theorem \ref{thm_simple2}. 

\medskip
\noindent
{\bf Outline of the analysis:}
To set up recurrences for the processes $Z_n, n \geq 1$ and $X_n, n \geq 1$ we let  $I^{n}=(I^{n}_0,\ldots,I^{n}_{b-1})$ denote the numbers of elements in the $b$ buckets after distribution of all $n$ elements in the first partitioning stage. We obtain, recalling notation (\ref{trunc}),  \begin{align}\label{rec_rn1}
Z_n \stackrel{d}{=} \left( Z^{(v_1)}_{I^{n}_{v_1}}\left(\bar v \right)+n\right)_{v \in \Sigma^\infty},
\end{align}
where $(Z^{(0)}_{j}),\ldots,(Z^{(b-1)}_{j}),I^{n}$ are independent.

For the normalized processes $X_n$ in (\ref{def:proc}), using Proposition \ref{prop:elem} $iii)$,  we obtain 
\begin{align}
X_n &\stackrel{d}{=} \Bigg( \sqrt{\frac{I^{n}_{v_1}}{n}}X^{(v_1)}_{I^{n}_{v_1}}(\bar v) +
 m_{v_1}(\bar v) \frac{I^{n}_{v_1}- \mu_{v_1} n}{\sqrt{n}}
\Bigg)_{v \in \Sigma^\infty}, \label{rec_mod}
\end{align}
with conditions on independence and distributions as in (\ref{rec_rn1}).

From the underlying probabilistic model it follows that the vector $I^{n}$ has the multinomial $M(n;\mu_1,\ldots,\mu_{b-1})$ distribution. Hence, we have
$\frac{1}{n}I^{n}\to (\mu_1,\ldots,\mu_{b-1})$ almost surely as $n\to\infty$ and
\begin{align} \label{limN}
\frac{I^{n}-(\mu_1,\ldots,\mu_{b-1})n}{\sqrt{n}} \stackrel{d}{\longrightarrow} N,
\end{align}
where $N$ has the multivariate normal distribution ${\cal N}(0,\Omega)$ with mean zero and covariance matrix~$\Omega$ given by
$\Omega_{ij} =\mu_i(1-\mu_i)$ if $i=j$ and $\Omega_{ij} =-\mu_i \mu_j$ if $i\neq j$.
Note that $\sum_{r=0}^{b-1}  N_r  = 0$ almost surely. Similarly, we write $N^{(r)}, r \in \Sigma$ for a Gaussian random variable on $\mathbb R^b$ with zero mean and covariance matrix $\Omega^{(r)}$ given by
$\Omega^{(r)}_{ij} = p_{ri}(1-p_{ri})$ if $i=j$ and $\Omega_{ij} =-p_{ri} p_{rj}$ if $i\neq j$.

Let $\mathfrak B : \mathbb R^b \to \Co$ denote the bounded linear operator
$$\mathfrak B(v)(w) =   m_{w_1}(\bar w) v_{w_1}, \quad w \in \Sigma^\infty.$$
Motivated by \eqref{rec_mod} and \eqref{limN}, we associate the limit equation
\begin{align}\label{fpe_rn}
X\stackrel{d}{=} \Bigg( \sqrt{\mu_{v_1}}  X_{v_1}(\bar v) +\mathfrak{B}(N)(v)\Bigg)_{v \in \Sigma^\infty},
\end{align}
where $X_0,\ldots,X_{b-1}, N$ are independent, and $X_0, \ldots, X_{b-1}$ satisfy the system of fixed-point equations
\begin{align}\label{fpe_rn_sys}
X_r  \stackrel{d}{=} \Bigg( \sqrt{p_{rv_1}}  X_{v_1}(\bar v) +\mathfrak{B}(N^{(r)})(v)\Bigg)_{v \in \Sigma^\infty} , \quad r \in \Sigma,
\end{align}
with conditions on independence as in the previous line.
Considering recurrence \eqref{rec_mod} and limiting equation \eqref{fpe_rn}, it suffices to prove the functional limit theorem for the processes $X_n^{(r)}, n \geq 0, r \in \Sigma$.

\smallskip For $r \in \Sigma$,  let $H_r$ denote the Gaussian process defined in Theorem \ref{thm_simple2} when the initial distribution of the Markov source is $\mu^r$ given in 
\eqref{rini}.
The contraction arguments in the proof below show that
$H_0, \ldots, H_{b-1}$ is the unique set of random variables (in distribution) satisfying \eqref{fpe_rn_sys} with values in $\Co$ under the condition $\sup_r \EE{\|H_r\|^3} < \infty$. In fact, we have the following stronger result.
\begin{proposition} \label{prop:uniq}
The family $H_0, \ldots, H_{b-1}$  is the unique family of random variables (in distribution) in $\Co$ satisfying \eqref{fpe_rn_sys}.
\end{proposition}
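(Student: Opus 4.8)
The plan is to establish uniqueness by a contraction argument in a complete metric space, but without imposing any a priori moment condition on a candidate solution. Suppose $(K_0,\ldots,K_{b-1})$ is any family of $\Co$-valued random variables satisfying \eqref{fpe_rn_sys}. First I would use the system \eqref{fpe_rn_sys} to control the tails of $\|K_r\|$: iterating the fixed-point equation once shows that $K_r$ is a sum of a scaled copy of some $K_{v_1}$ along a uniformly random first symbol and an independent $\Co$-valued term of the form $\mathfrak B(N^{(r)})$, whose norm has Gaussian (hence all-moment-finite) tails. The contraction factor $\sqrt{p_{rv_1}} \le \sqrt{p_{\max}} < 1$ is strict, so iterating $N$ times expresses $K_r$ as a sum of $b^N$ independent perpetuity-type terms with geometrically shrinking scale factors plus a remainder term built from a deep copy $K_{w}$ scaled by $\prod_{i=1}^N \sqrt{p_{w_{i-1}w_i}} \le p_{\max}^{N/2}$. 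Since $\|K_w\|$ is a genuine (finite a.s.) random variable and the scale tends to zero, the remainder vanishes in probability; this identifies the law of $K_r$ as that of an almost surely convergent series and, crucially, forces $\EE{\|K_r\|^p} < \infty$ for all $p$ — in particular $\sup_r \EE{\|K_r\|^3} < \infty$. At that point the moment-restricted uniqueness, which (as the text notes) follows from the contraction arguments in the proof of Theorem~\ref{thm_simple2}, applies and gives $K_r \stackrel{d}{=} H_r$.

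More concretely, for the bootstrap I would argue as follows. From \eqref{fpe_rn_sys}, for each $r$,
\begin{align*}
\|K_r\| \stackrel{d}{\le} \sqrt{p_{\max}}\, \|K_{J}\|' + \|\mathfrak B(N^{(r)})\|,
\end{align*}
where $J$ is a $\Sigma$-valued random index, $\|K_J\|'$ has the law of $\|K_J\|$ and is independent of the Gaussian term, and we used that $\|K_{v_1}(\bar v)\| \le \|K_{v_1}\|$ together with $\sqrt{p_{rv_1}} \le \sqrt{p_{\max}}$. Writing $M := \max_r \|K_r\|$ (as a law, via the common stochastic bound), a standard stochastic-domination/fixed-point-of-distribution-functionals argument on the tail $t \mapsto \sup_r \ProbB{\|K_r\| > t}$ shows that $M$ is stochastically dominated by the all-moments-finite random variable $\sum_{i\ge 0} p_{\max}^{i/2} W_i$ with $W_i$ i.i.d.\ copies of $\|\mathfrak B(N^{(0)})\| + \cdots$ — here the key point is only that $\|\mathfrak B(N^{(r)})\|$ has finite moments of all orders because $N^{(r)}$ is Gaussian and $\mathfrak B$ is a bounded linear operator into $\Co$. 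Hence $\sup_r \EE{\|K_r\|^p} < \infty$ for every $p \ge 1$.

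With the moment bound in hand, the second half is the contraction step that already underlies Theorem~\ref{thm_simple2}: on the space of $b$-tuples of distributions on $\Co$ with finite third moment, metrized by the minimal-$L_2$ (Wasserstein-type) distance coordinatewise, the map induced by \eqref{fpe_rn_sys} is a strict contraction with factor at most $\sqrt{p_{\max}}$, using that the Gaussian additive terms are shared and that $\sum_r p_{rv_1} \cdot(\text{coupling cost}) \le p_{\max}\cdot(\text{cost})$ after the conditioning on the first symbol. A strict contraction on a complete metric space has at most one fixed point, so $(K_0,\ldots,K_{b-1})$ and $(H_0,\ldots,H_{b-1})$ must agree in distribution. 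The main obstacle is the first paragraph: one must be careful that \eqref{fpe_rn_sys} is used only with quantities that are known to be finite (the random variables $\|K_r\|$ themselves, not their expectations) until the bootstrap is complete, and that the truncation/remainder estimate genuinely converges — the finiteness of the Gaussian term's moments and the strict bound $p_{\max} < 1$ are exactly what make this go through.
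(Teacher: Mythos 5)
There is a genuine gap in the first step of your plan, the moment bootstrap. The fixed-point equation \eqref{fpe_rn_sys} gives, after taking suprema in $v$,
\[
\|K_r\| \;\le\; \max_{j\in\Sigma}\bigl(\sqrt{p_{rj}}\,\|K_j\| + \|m_j\|\,|N^{(r)}_j|\bigr),
\]
so the stochastic bound you need is not $\sqrt{p_{\max}}\,\|K_J\|' + W$ with a \emph{single} random index $J$, but rather $\sqrt{p_{\max}}\,\max_{j}\|K_j\| + W$ over $b$ \emph{independent} copies. Iterating $N$ times therefore produces a remainder whose sup-norm is $\max_{w\in\Sigma^N}\sqrt{\pi_r(w)}\,\|K^{(w)}\|$ with $b^N$ independent copies $\|K^{(w)}\|$, and without an a priori tail bound on $\|K\|$ there is no reason for this maximum to vanish: the union bound over $b^N$ branches can dominate the geometric scale factor $p_{\max}^{N/2}$ (indeed, if $\|K\|$ had Pareto tails of index $\alpha<2$, the sum $\sum_{w\in\Sigma^N}\pi_r(w)^{\alpha/2}$ grows exponentially in $N$). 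In the scalar perpetuity $X\stackrel{d}{=}aX+B$ the bootstrap you describe does work, precisely because there is only one branch at each level; the passage to a $b$-ary branching breaks it. Your claim that "the remainder vanishes in probability" is correct pointwise at each fixed $v\in\Sigma^\infty$, but not in sup-norm, and it is the sup-norm version that would be needed to conclude $\EE{\|K_r\|^p}<\infty$ and hence invoke the moment-restricted contraction.

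The paper avoids this by working at the level of marginal distributions. Fixing $r=0$ and $v=00\ldots$ and expanding \eqref{fpe_rn_sys} along the constant string yields a genuine scalar perpetuity $X_0(v)\stackrel{d}{=}\sqrt{\pi_0(v^{(\ell)})}\,X_0(v)+(\text{Gaussian drift})$, and Vervaat's uniqueness theorem for such perpetuities applies without any moment hypothesis on the unknown solution. This identifies $X_0(v)$ as Gaussian; continuity and a further expansion then give Gaussianity and determination of all finite-dimensional marginals, which in turn pin down the law of the process in $\Co$. In other words, Gaussianity is obtained first, and light tails of $\|K_r\|$ (hence all moments) come as a consequence rather than as an input. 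If you want to salvage your route, you could first establish Gaussianity of the marginals as in the paper, deduce Gaussian tails for $\|K_r\|$ from the Borell--TIS inequality, and only then invoke the $L_3$-contraction; but as written your bootstrap presupposes what it must prove.
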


The remaining part of this section is devoted to the proofs of Theorem \ref{thm_simple2} and Proposition \ref{prop:uniq}.

\begin{proof}[Proof of Theorem \ref{thm_simple2}]
The main proof idea is to construct versions of the sequences $X^{(r)}_n, n \geq 0, r \in \Sigma$ and limits $H_0, \ldots, H_{b-1}$ on the same probability space in such a way, that the statement of the theorem holds with convergence in probability. The approach uses Skorokhod's representation theorem. It has proved fruitful in a number of similar problems, e.g.\ in \cite{GR, sunedr14}. In the process, one also constructs  solutions to \eqref{fpe_rn_sys} on an almost sure level.

\medskip \noindent \textbf{A family of tries.} In a trie generated by pairwise distinct infinite strings $s_1, \ldots, s_n$, for $v \in \Sigma^*$, let $J(v) = \#\{ 1 \leq k \leq n: v \preceq s_k \}$.  If $J(v) \geq 2$ and $J(vr) \geq 1$ for $v \in \Sigma^*, r \in \Sigma$, then $J(vr)$ denotes the number of strings stored in the subtree rooted at $vr$. Therefore, we call $J(v)$ the \emph{nontruncated} subtree size of node $v$. The family $\{J(v), v \in \Sigma^*\}$ determines the trie, that is, it allows to reconstruct  the order statistics $s_{(1)}, \ldots, s_{(n)}$.  Below, we use this observation to define random tries through their nontruncated subtree sizes. Note that, in our trie constructed from $S_1, \ldots, S_n$, the nontruncated subtree sizes evolve as follows: $J(\emptyset) = n$ and $(J(0), \ldots, J(b-1))$ has the distribution of $I^{(n)}$. Then, conditionally on $J(v), v \in \cup_{\ell \leq k} \Sigma_\ell$ for $k \geq 1$,
upon writing $w^+$ for the last symbol of a vector $w$, we have:
\begin{itemize}
\item the random variables $(J(v0), \ldots, J(v(b-1)), v \in  \Sigma_{k+1}$ are independent, and
\item for $v \in  \Sigma_{k+1}$, the vector $(J(v0), \ldots, J(v(b-1))$ has the multinomial $(J(v); p_{v^+0}, \ldots, p_{v^+(b-1)})$ distribution.
\end{itemize}

Let $\{ (I^{(r), n, v })_{n\geq 0}, N^{(r), v} : v \in \Sigma^*\}$, $r \in \Sigma$, be independent families of independent and identically distributed random variables where
 $I^{(r), n, v}$ has the multinomial $M(n;p_{r0},\ldots,p_{r(b-1)})$ distribution, and $N^{(r), v}$ has the distribution of $N^{(r)}$ with \begin{align} \label{startingpoint}
\frac{I^{(r), n, v}-(p_{r0},\ldots,p_{r(b-1)}) n }{\sqrt{n}}\rightarrow N^{(r),v}, \quad n \to \infty,
\end{align}
almost surely for all $v \in \Sigma^*, r \in \Sigma$. Such a family exists by Skorokhod's representation theorem.
For any $v \in \Sigma^*, r,k \in \Sigma,  n \geq 0$, set $J_n^{(r), v}(\emptyset) = n$ and $J_n^{(r), v}(k) = I_k^{(r), n, v}$.
Then, recursively, for $v \in \Sigma^*, r \in \Sigma,  n \geq 0$ we define
$$J_n^{(r), v}(wk) = I_k^{(w^+),J_n^{(r), v}(w) , vw}, \quad w \in \bigcup_{\ell = 1}^\infty \Sigma^\ell, k \in \Sigma.$$
By construction,
$\{J_n^{(r), v}(w) : w \in \Sigma^*\}$ is distributed like $\{J(w) : w \in \Sigma^*\}$ defined above when the initial distribution of the chain is $\mu^r$ given in \eqref{rini}.
Hence, it is the family of nontruncated subtree sizes of a trie generated by $n$ infinite strings with order statistics   $V^{(r),n,v}_{1} < \cdots < V^{(r),n,v}_{n}$ where  $(V^{(r),n,v}_{1}, \ldots, V^{(r),n, v}_{n})$ is distributed like $(S_{(1)}, \ldots, S_{(n)})$.  Observe however that these tries do not almost surely grow, that is, with $\mathcal V^{(r), v}_n := \{V^{(r),n,v}_{1}, \ldots, V^{(r),n,v}_{n}\}$, we do not almost surely have $ \mathcal V^{(r), v}_n \subseteq \mathcal V^{(r), v}_{n+1} $.
 The $\Co$-valued random variables $Z_n^{(r), v}, v \in \Sigma^\infty$ are now defined as in \eqref{def:z} but based on $\mathcal V^{(r), v}_n$. By construction, for any $ v \in \Sigma^*, r \in \Sigma, n \geq 0$, we have $Z_n^{(r),v} \stackrel{d}{=} Z_n^{(r)}$ and
$$Z_n^{(r), v}(w)  = Z^{(w_1), vw_1}_{I_{w_1}^{(r), n, v}}(\bar w) + n, \quad w \in \Sigma^\infty.$$

\medskip \noindent \textbf{The limit process.} Setting $H_0^{(r), v} := 0$ for all $v \in \Sigma^*, r \in \Sigma$, we recursively construct random variables $H_n^{(r), v},  v \in \Sigma^*, r \in \Sigma, n \geq 1,$ by
$$H_{n+1}^{(r), v}(w) = \sqrt{p_{rw_1}}H_n^{(w_1), vw_1}(\bar w) +\mathfrak{B}(N^{(r), v})(w), \quad w \in \Sigma^\infty. $$
It follows that, for any $p \in \N$,
\begin{align}
 \sup_r  \EE{\|H_{n+1}^{(r), v} - H_{n}^{(r), v}\|^p} & \leq \sup_r  \sum_{k=0}^{b-1} p_{rk}^{p/2} \EE{\|H_{n}^{(k), v k} - H_{n-1}^{(k), vk}\|^p} \nonumber \\
& \leq \sup_r \sum_{k=0}^{b-1} p_{rk}^{p/2}   \sup_r \E[ \| H^{(r), v}_{n} - H^{(r), v}_{n-1}\|^p]  \nonumber \\
& \leq \left( \sup_r \sum_{k=0}^{b-1} p_{rk}^{p/2}\right)^n \sup_r \E[ \| H^{(r), v}_{1} \|^p]. \label{boundp}
\end{align}
From here, choosing $p > 2$, standard arguments (see, e.g.\ the proof of Lemma 2.1 in \cite{sunedr14}) show that, almost surely, $H_n^{(r),v}, n \geq 0$ is uniformly Cauchy for all $v \in \Sigma^*, r \in \Sigma$. By the completeness of $\Co$, the processes are uniformly convergent. As intended, the limits denoted by $H_r^{v}$ satisfy
$$H_r^{v}(w) = \sqrt{p_{rw_1}} H^{vw_1}_{w_1}(\bar w) +\mathfrak{B}(N^{(r), v})(w), \quad w \in \Sigma^\infty.$$
In particular, $H_0^{v}, \ldots, H_{b-1}^v$ satisfy \eqref{fpe_rn}. From \eqref{boundp}, it follows that $\E[\|H_n^{(r),v} - H_r^{v}\|^p] \to 0$ for all
$p \in \N$. By construction, we also have the following useful series representation: almost surely, for any $v \in \Sigma^*, r \in \Sigma$,
\begin{align} \label{inf_sum_rep} H_r^v(w) = \sum_{s=0}^{\infty} \sqrt{\pi_r(w^{(s)})} m_{w_{s+1}}(w_{s+2}w_{s+3} \ldots) N^{(w_s), vw^{(s)}}_{w_{s+1}}, \quad w \in \Sigma^\infty.
\end{align}

\medskip \noindent \textbf{Convergence of the discrete process.} For $v \in \Sigma^\infty, r \in \Sigma, n \geq 0,$  set
$$X_n^{(r), v}(w) = \frac{Z_n^{(r), v}(w) - m_r(w)n}{\sqrt n}, \quad w \in \Sigma^\infty.$$ By construction, we have $X_n^{(r)} \stackrel{d}{=} X_n^{(r), v}$.
In the context of the contraction method, it has turned out fruitful to define an accompanying sequences by replacing the coefficients in the system of limiting equations  by the corresponding terms in the distributional recurrence.
For all $v \in \Sigma^*, r \in \Sigma, n \geq  0$, let
\begin{align*}
Q_n^{(r), v}(w)  & =   \sqrt{\frac{I^{(r), n,v}_{w_1}}{n}}H_{w_1}^{vw_1}(\bar w)+ \mathfrak B (N^{(r), v})(w), \quad w \in \Sigma^\infty.
\end{align*}
By construction, the random variables $H^{v0}, \ldots, H^{v(b-1)}, (I^{(r), n,v}, N^{(r), v})$ are independent and their joint  distribution does not depend on $v$. Hence, the same follows for $Q_n^{(r), v}$ and $Q_n^{(r), v} - H_r^v$. Further,  by induction over the length of the vector $v$, it is straightforward to verify that
the distribution of $X_n^{(r),v } - Q_n^{(r), v}$ does not depend on $v$. We omit these details.

The proof of $\Delta_n := \sup_r \EE{ \| X_n^{(r),v} - H^{v}_r \|^3} \to 0$ is standard in the context of the contraction method.
First of all, we have
\begin{align*} \sup_r \EE{ \| Q_n^{(r),v} - H^v_r \|^3} & \leq \sup_r \E\left[ \sum_{k=0}^{b-1} \left | \sqrt{\frac{I^{(k), n,v}_r}{n}} - \sqrt{p_{rk}} \right |^3 \left \| H_k^{vk} \right \|^3 \right]  \\
& \leq b \cdot \sup_r \E\left[\sup_k \left | \sqrt{\frac{I^{(r), n,v}_k}{n}} - \sqrt{p_{rk}} \right |^3\right] \sup_r \E \left[ \left \| H_r \right \|^3\right]. \end{align*}
 The right hand side tends to zero by \eqref{startingpoint}. (We only use the law of large numbers here.)
By the triangle inequality, it follows that
\begin{align*}
\Delta_n^{1/3}  \leq &  \left(\sup_r \EE{ \| X_n^{(r),v} - Q_n^{(r), v} \|^3}\right)^{1/3} + o(1) \\
& \leq \left(\sup_r \sum_{k=0}^{b-1} \EE{ \left( \frac{I^{(r), n,v}_k}{n} \right)^{3/2} \left \|  X^{(k), vk}_{I_k^{(r), n, v}} - H^{vk}_k \right\|^3}\right)^{1/3} \\
 & + \sup_r \sup_k \left(\EE{\left|\frac{I^{(r), n,v}_k- p_{rk}n}{\sqrt {n}} - N^{(r), v}_k \right|^3}\right)^{1/3} + o(1).
\end{align*}
By \eqref{startingpoint}, the second summand on the right hand side turns to zero as $n \to \infty$.
It follows that
\begin{align*}
\Delta_n^{1/3} \leq \left(\sup_r \sum_{k=0}^{b-1} \EE{ \left( \frac{I^{(r), n,v}_k}{n} \right)^{3/2} \Delta_{I^{(r), n,v}_k}}\right)^{1/3} + o(1).
\end{align*}
As \begin{align} \label{cont_con} \EE{\left(\frac{I^{(r),n, v}_k}{n}\right)^{3/2}} \to p_{rk}^{3/2}, \quad \sup_r \sum_{k=0}^{b-1} p_{rk}^{3/2} < 1, \end{align} a simple induction on $n$ shows that $\Delta_n$ is bounded. In a second step, by the contraction argument used in \eqref{boundp}, one can show that $\Delta_n \to 0$. We omit the details which are standard in the framework of the contraction method and refer the reader to
the proof of, e.g.\ \cite[Proposition 3.3]{sunedr14} or \cite[Theorem 4.1]{neru04}  where these arguments are worked out in detail.

Finally, the covariance function of $H$ in the uniform model can easily be computed using the stochastic fixed-point equation \eqref{fpe_rn}, as $X, X_1, \ldots, X_{b-1}$ are identical in distribution.
\end{proof}
\begin{proof}[Proof of Proposition \ref{prop:uniq}]
Assume that $X_0, \ldots, X_{b-1}$ (more precisely, the corresponding distributions) satisfy \eqref{fpe_rn_sys}. Our aim is to prove that $X_r$ has the distribution of $H_r$.
We first show that, for all $r \in \Sigma, v \in \Sigma^\infty,$ the random variable $X_r(v)$ has a mean zero Gaussian distribution. To this end, we expand the system \eqref{fpe_rn_sys} for several levels. To be more precise, using this system of equations, by induction (over $\ell$) it is straightforward to show that, for all $v \in \Sigma^\infty, r \in \Sigma$ and $\ell \geq 0$, setting $v_0 = r$, we have
\begin{align} \label{dep} X_r(v) \stackrel{d}{=} \sqrt{\pi_r(v^{(\ell)})} X_{v_\ell} (v_{\ell+1} v_{\ell+2} \ldots) + \sum_{s=0}^{\ell-1} \sqrt{\pi_r(v^{(s)})} m_{v_{s+1}}(v_{s+2}v_{s+3} \ldots) N^{(v_s)}_{v_{s+1}},\end{align}
with independent random variables $N^{(v_0)}, \ldots, N^{(v_\ell)}, X_{v_\ell}$ where the distributions of $N^{(v_0)}, \ldots, N^{(v_\ell)}$ were introduced in the previous proof. In particular, choosing
$r = 0$ and $v = 00\ldots$, we have
$$X_0(v) \stackrel{d}{=} \sqrt{\pi_0(v^{(\ell)})} X_{0} (v) + \sum_{s=0}^{\ell-1} \sqrt{\pi_0(v^{(s)})} m_{0}(0) N^{(0)}_0,$$
Classical results from the theory of perpetuities, see, e.g.\ Theorem 1.5 in \cite{Vervaat}, show that this identity uniquely determines the distribution of $X_0(v)$. Thus, $X_0(v)$ has a mean zero normal distribution. From \eqref{dep} it follows that, for all $v \in \Sigma_0^\infty$, $X_r(v)$ has a mean zero Gaussian distribution. By continuity, the same holds for all $v \in \Sigma^\infty$.

Next, we need to verify that, for all $q \geq 2, \lambda_1, \ldots, \lambda_q \in \mathbb R, w_1, \ldots, w_q \in \Sigma^\infty$ and $r \in \Sigma$ the random variable
$\lambda_1 X_r(w_1) + \ldots \lambda_q X_r(w_q)$ has a mean zero Gaussian distribution. For the sake of presentation, we consider the case $q=2$ and write
$\theta = w_1, \sigma = w_2$. Assume that $\theta \neq \sigma$ and set $j := j(\theta, \sigma)$  defined in \eqref{def:coin}.  Our aim is to expand the system \eqref{fpe_rn_sys} on a functional level. More precisely, we define coupled versions of $X_0, \ldots, X_{b-1}$ by backward induction as follows:
first, let $\{\mathcal X_r^v : v \in \Sigma^{j+1}\}$, $r \in \Sigma$  be independent families of  independent copies of $X_r$. We also assume these families to be independent of the random variables introduced in the previous section. Then, recursively, for
$v \in \Sigma^k$, $0 \leq k \leq j$, $r \in \Sigma$, set
$$\mathcal X_r^v(w) =  \sqrt{p_{rw_1}} \mathcal X_{w_1}^{vw_1}(\bar w) + \mathfrak{B}(N^{(r), v})(w), \quad w \in \Sigma^\infty. $$
Since $X_0, \ldots, X_{b-1}$ satisfy \eqref{fpe_rn_sys}, the same is true for $\mathcal X_0^v, \ldots, \mathcal X_{b-1}^v$ for all $v\in \Sigma^k, 0 \leq k \leq j+1$.
Similarly to \eqref{dep}, we have
\begin{align} \label{the}\mathcal X_r^\emptyset(\theta) = \sqrt{\pi_r(\theta^{(j+1)})} \mathcal X_{\theta_{j+1}}^{\theta^{(j+1)}} (\theta_{j+2} \theta_{j+3} \ldots)
+ \sum_{s=0}^{j} \sqrt{\pi_r(\theta^{(s)})} m_{\theta_{s+1}}(\theta_{s+2}\theta_{s+3} \ldots) N^{(\theta_s), \theta^{(s)}}_{\theta_{s+1}}, \end{align}
and
\begin{align} \label{sig} \mathcal X_r^\emptyset(\sigma) = \sqrt{\pi_r(\sigma^{(j+1)})} \mathcal X_{\sigma_{j+1}}^{\sigma^{(j+1)}} (\sigma_{j+2} \sigma_{j+3} \ldots)
+ \sum_{s=0}^{j} \sqrt{\pi_r(\theta^{(s)})} m_{\sigma_{s+1}}(\sigma_{s+2}\sigma_{s+3} \ldots) N^{(\theta_s), \theta^{(s)}}_{\sigma_{s+1}}. \end{align}
In particular,
\begin{align*}
 \lambda_1  X_r(\theta) & + \lambda_2 X_r(\sigma)  \\
 \stackrel{d}{=} & \lambda_1 \sqrt{\pi_r(\theta^{(j+1)})} X_{\theta_{j+1}} (\theta_{j+2} \theta_{j+3} \ldots) + \lambda_2 \sqrt{\pi_r(\sigma^{(j+1)})} X_{\sigma_{j+1}}(\sigma_{j+2} \sigma_{j+3} \ldots) \\
& + \sum_{s=0}^{j-1}  (\lambda_1 m_{\theta_{s+1}}(\theta_{s+2}\theta_{s+3} \ldots) + \lambda_2 m_{\theta_{s+1}}(\sigma_{s+2}\sigma_{s+3} \ldots))
\sqrt{\pi_r(\theta^{(s)})} N^{(\theta_s)}_{\theta_{s+1}} \\
& + \lambda_1  m_{\theta_{j+1}}(\theta_{j+2}\theta_{j+3} \ldots) N^{(\theta_j)}_{\theta_{j+1}} + \lambda_2
m_{\sigma_{j+1}}(\sigma_{j+2}\sigma_{j+3} \ldots) N^{(\theta_j)}_{\sigma_{j+1}},
\end{align*}
with independent random variables $N^{(\theta_0)}, \ldots, N^{(\theta_j)}, X_{\theta_{j+1}}, X_{\sigma_{j+1}}$.
By the first part of the proof the right hand side has a zero mean Gaussian distribution.
Further, \eqref{the} and \eqref{sig} determine $\EE{X_r(\theta) X_r(\sigma)}$. This concludes the proof.
\end{proof}

\subsection{Proof of Theorem \ref{thm_simple}} \label{sec:second}

In this section, we discuss the asymptotic behaviour of the sequence
$$\mathcal Y_n(t) := \frac{Y_n(\lfloor t n \rfloor +1) - m\circ h(t) n}{\sqrt{n}}, \quad t \in [0,1].$$
Our first result is the natural extension of Theorem \ref{thm_simple2} to the process $Z_n(S_{(\lfloor t n \rfloor +1)}), n \geq 1$.
\begin{proposition} \label{prop:nat}
In distribution, with respect to the Skorokhod topology on $\Do$, we have
$$\left(\frac{Z_n(S_{(\lfloor t n \rfloor +1)}) - m(S_{(\lfloor t n \rfloor +1)})n}{\sqrt{n}}\right)_{t \in [0,1]} \to H\circ h.$$
\end{proposition}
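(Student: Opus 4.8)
The plan is to transfer the functional convergence in $\Co$ from Theorem \ref{thm_simple2} to a functional convergence in $\Do$ via the random time change $t \mapsto S_{(\lfloor tn \rfloor +1)}$. The key structural observation is that the process whose convergence we want, namely $t \mapsto X_n(S_{(\lfloor tn\rfloor +1)})$ with $X_n$ as in \eqref{def:proc}, is exactly the composition of the $\Co$-valued process $X_n$ with the (random, piecewise constant, $\Sigma^\infty$-valued) map $\Psi_n : t \mapsto S_{(\lfloor tn \rfloor +1)}$. By Theorem \ref{thm_simple2}, $X_n \to H$ in distribution in $\Co$. So the whole argument reduces to: (i) understanding the limit of $\Psi_n$ as a map $[0,1] \to \Sigma^\infty$, and (ii) proving that composition is continuous enough to apply a continuous-mapping / Skorokhod-representation argument. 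For (i), the natural candidate limit is the deterministic map $h : [0,1] \to \Sigma^\infty$ introduced in Section \ref{sec:prel}; indeed Lemma \ref{lem:asyS} already says that for each fixed $t \notin F(\Sigma_0^\infty)$ we have $S_{(\lfloor tn\rfloor+1)} \to h(t)$ almost surely, and handles the boundary points $t \in F(\Sigma_0^\infty)$ (where $h$ has a jump, and $S_{(k)}$ concentrates near $h(t)$ or $h(t-)$).

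First I would set everything up on one probability space using Skorokhod's representation theorem: realize $X_n \to H$ almost surely in $\Co$ (with $\|X_n - H\| \to 0$), together with the data $S_1, S_2, \ldots$ generating the $Z_n$; one has to be a little careful since $X_n$ and $S_{(k)}$ are defined from the same randomness, but one can use the trie-with-nontruncated-subtree-sizes construction from the proof of Theorem \ref{thm_simple2}, which simultaneously carries the order statistics $V^{(n)}_{(k)}$ and the processes $Z_n$. On that space, I would show: for every $\varepsilon>0$ there is (with probability tending to one, in fact almost surely for $n$ large) a strictly increasing continuous bijection $\lambda_n$ of $[0,1]$ with $\|\lambda_n - \mathrm{id}\|\to 0$ such that $\|X_n \circ \Psi_n \circ \lambda_n - H \circ h\| \to 0$. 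The $\lambda_n$ is built to absorb the discrepancy between the jump locations of $\Psi_n$ and those of $h$: the jump points of $h$ are the (countably many) values $F(v)$, $v \in \Sigma^\infty_0$, and near each such point $\Psi_n$ jumps at the random location where a $\mathrm{Bin}(n,F(v))$ count crosses $\lfloor tn\rfloor$, which by the CLT / Lemma \ref{lem:asyS}$ii)$ is within $O(n^{-1/2})$ (times a tight factor) of $F(v)$; away from a finite set of "large" jumps one controls things uniformly because $\sum_{v\in\Sigma_0^\infty}\pi(\cdot)$-type tails are summable and $j(S_{(k)},h(t))\to\infty$ uniformly (via the height bound and Lemma \ref{lem:asyS}, exactly as in the proof of Theorem \ref{prop:mean}$ii)$). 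Then $\|X_n\circ\Psi_n\circ\lambda_n - H\circ h\| \le \|X_n - H\| + \|H\circ\Psi_n\circ\lambda_n - H\circ h\|$, and the second term goes to zero by uniform continuity of the (a.s.\ continuous, on a compact metric space) sample paths of $H$ together with $d_a(\Psi_n(\lambda_n(t)), h(t)) = a^{\,j(\Psi_n(\lambda_n t), h(t))}\to 0$ uniformly in $t$. Since $H \circ h \in \Do$ (the discontinuities of $h$ at $F(\Sigma_0^\infty)$ are right-continuous with left limits, and $H$ is $\mathcal T_\infty$-continuous, so the composition is càdlàg), this gives convergence in the $J_1$-topology, hence in distribution.

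The main obstacle I expect is the uniform control of the time change $\lambda_n$ near the infinitely many, but summably small, jump points of $h$. A single fixed jump is easy (Lemma \ref{lem:asyS}$ii)$ plus the CLT), but a $J_1$-argument needs one $\lambda_n$ that simultaneously realigns all of them with displacement $\to 0$ uniformly. The resolution is a truncation: fix $K$ so that all jumps of $h$ of size exceeding some $\delta$ correspond to strings $v$ with $|v|\le K$ — there are finitely many such $v$, align those exactly with explicit local time changes; for the remaining jumps, they contribute at most (roughly) $\sup_{|v|>K}\|$ oscillation of $H$ over the corresponding bucket $\|$, which is small by uniform continuity of $H$'s paths once $K$ is large, so no realignment is needed there and the error is absorbed into the $\varepsilon$. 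One also has to check that $H\circ h$ has, almost surely, no jump coinciding with $0$ or $1$ and that its jump set is the deterministic set claimed (inherited from $h$, since $H$ has continuous paths and is a.s.\ nonzero on the relevant left/right limits), which is routine. Everything else — tightness is automatic from the explicit $J_1$-convergence, and convergence of all moments is not claimed in this proposition — reduces to the estimates already proven in Sections \ref{sec:prel}--\ref{sec:uniform}.
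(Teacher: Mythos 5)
Your proposal is correct and rests on the same core idea as the paper's proof: construct random strictly increasing bijections $\lambda_n$ of $[0,1]$ that realign the jump times of the order-statistic map $t \mapsto S_{(\lfloor tn\rfloor+1)}$ with those of $h$, so that $j\bigl(S_{(\lfloor \lambda_n(t) n\rfloor+1)}, h(t)\bigr)$ is uniformly large while $\|\lambda_n-\mathrm{id}\|\to 0$, and then invoke an equicontinuity argument to transfer the $\Co$-convergence of Theorem \ref{thm_simple2} to $J_1$-convergence in $\Do$. Where you differ is in the packaging. You appeal to Skorokhod's representation theorem to get $\|X_n - H\|\to 0$ almost surely, and you correctly note that this forces you to realize $X_n$ and the order statistics on the same space via the trie construction from the proof of Theorem \ref{thm_simple2}. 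The paper avoids this entirely with a lighter reduction: by Slutzky's lemma it is enough to show $d\bigl(X_n(S_{(\lfloor\cdot n\rfloor+1)}),\, X_n\circ h\bigr)\to 0$ in probability — a statement about the original space only — and tightness of $(X_n)$ in $\Co$ then converts the two conditions on $\lambda_n$ into the desired convergence, without ever passing to an almost-sure coupling with $H$. The constructions of $\lambda_n$ also differ slightly: you fix a truncation level $K$, align the finitely many coarse jump points exactly, and absorb the fine jumps using the modulus of continuity of $H$; the paper aligns at a growing level $h_n$ (the complete-trie height), which forces $j(\cdot,\cdot)\ge h_n\to\infty$ uniformly, at the cost of a three-term decomposition of $\|\lambda_n-\mathrm{id}\|$ to recover uniformity over the growing set $\Sigma^{h_n}$. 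Both versions of $\lambda_n$ work; the trade-off is simply whether the ``$\varepsilon$'' is paid in a continuity-modulus argument or in a more careful control of $\|\lambda_n-\mathrm{id}\|$. So your proposal is a valid alternative route to the same conclusion, but the Slutzky-plus-tightness reduction used by the paper is the cleaner entry point because it does not require a joint almost-sure representation.
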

\begin{proof}
Recall $X_n$ from \eqref{def:proc}.
By Theorem \ref{thm_simple2} we have $X_n \circ h \to H \circ h$ in $\Do$ in distribution. By Slutzky's lemma, it remains to show that, for some metric $d$ generating the Skorokhod topology on $\Do$, we have $d(X_n (S_{(\lfloor \cdot n \rfloor +1)}), X_n \circ h) \to 0$ in probability. As the sequence of distributions of $X_n, n \geq 1,$ is tight, it is enough to find a family of strictly increasing continuous bijections $\lambda_n, n \geq 1$ on $[0,1]$ such that, in probability,
\begin{align} \label{conv_dsk} \inf_{t \in [0,1]} j(S_{(\lfloor \lambda_n(t) n \rfloor +1)}, h(t))  \to \infty, \quad \text{and } \| \lambda_n - \text{id} \| \to 0.\end{align}
In fact, we show that both convergences hold in the almost sure sense. Let
$$h_n = \max \{ \ell \geq 0 : \Lambda_{n,\ell}(v) > 1 \text{ for all } v \in \Sigma^\ell \}.$$
$h_n$ is the lowest level of a node in the associated trie with outdegree strictly smaller than $b$. $h_n$ is monotonically increasing and $h_n \to \infty$ almost surely.
By construction, for  any $v \in \Sigma^{h_n}$, we can choose $t_v \in \{0, 1/n, 2/n, \ldots, (n-1)/n\}$ minimal with $v \preceq S_{(t_v n +1)}$. Note that both $t_v < t_w$ and $F(v00\ldots) < F(w00\ldots)$ if and only if $v < w$. Further, we have $t_{00\ldots0} = 0$ and $t_{(b-1)(b-1)\ldots(b-1)} \leq (n-1)/n$. Let us now argue that, for $\ell \geq 1, v \in \Sigma^\ell$, almost surely,
\begin{align}\label{eq33} t_{v00\ldots 0} \to F(v00\ldots), \quad n \to \infty.\end{align}
Assume for a contradiction that $t_{v00\ldots0} \geq F(v00\ldots) + \varepsilon$ for some $\varepsilon > 0$ and infinitely many $n$. Then, for those values of $n$,
$S_{(\lfloor t_{v00\ldots0} n\rfloor + 1)} \geq S_{(\lfloor (F(v00\ldots) + \varepsilon)n\rfloor +1)}$. By Lemma \ref{lem:asyS} $ii)$ the term on the right hand side takes values strictly larger and bounded away from $v00\ldots$ for all $n$ sufficiently large. This contradicts the fact that $S_{(\lfloor t_{v00\ldots0} n\rfloor + 1)} \to v00\ldots$ almost surely following from the definition of $t_{v00\ldots0}$. An analogous argument applies to the case $t_{v00\ldots0} \leq F(v00\ldots) - \varepsilon$ for infinitely many $n$. Summarizing, we have verified \eqref{eq33}. Now, we set
$$\lambda_n(F(v00\ldots)) = t_v, \quad v \in \Sigma^{h_n}, \quad \lambda_n(1) = 1.$$
Upon linearly interpolating $\lambda_n$ between successive values $F(v00\ldots), F(w00\ldots), v < w \in \Sigma^{h_n},$ and $t = 1$,
the function $\lambda_n$ is a strictly increasing piecewise linear bijection on the unit interval. By construction, $j(S_{(\lfloor \lambda_n(t) n \rfloor +1)}, h(t)) \geq h_n$ proving the first statement in \eqref{conv_dsk}.

Using the piecewise linearity of
$\lambda_n$, we further deduce
\begin{align*}\|\lambda_n(t) - t  \| = \sup_{v \in \Sigma^{h_n}} |F(v00\ldots) - t_v |.
\end{align*}
Fix a large constant $K \in \N$. In the remainder of the proof assume that $n$ is sufficiently large such that $h_n > K$. Then, the right hand side of the last display is bounded from above by
\begin{align} \label{3sums} \sup_{v \in \Sigma^{h_n}} |F(v00\ldots) - F(v^{(K)}00\ldots) | + \sup_{v \in \Sigma^{h_n}} |t_v - t_{v^{(K)}00\ldots0} |+  \sup_{v \in \Sigma^K} |F(v00\ldots) - t_{v00\ldots0}  |.\end{align}
We now show that each of these three terms can be made arbitrarily small for all $n$ sufficiently large upon choosing $K$ large enough. For the first summand, this follows immediately from the uniform continuity of $F$. For the third summand, it follows  from \eqref{eq33}. In order to analyze the second term in \eqref{3sums}, for $v \in \Sigma^K, v \neq (b-1)(b-1)\ldots(b-1)$ let $v^+ \in \Sigma^K$ denote the smallest vector in $\Sigma^K$ strictly larger than $v$. Further, let
$t_{(b-1)(b-1)\ldots(b-1)^+00\ldots0} := 1$. Then, the second summand is bounded by
$$ \sup_{v \in \Sigma^{K}} |t_{v^+00\ldots0} - t_{v00\ldots0} | \to \sup_{v \in \Sigma^{K}} |F(v^+00\ldots) - F(v00\ldots)|, $$
where the limit is taken as $n \to \infty$. Upon choosing $K$ sufficiently large, the right hand can be made arbitrarily small by uniform continuity of $F$. \end{proof}

\begin{proposition} \label{prop:D2}
If, for some (then all) $r \in \Sigma$, there exists $v \in \Sigma_0^\infty$ with $m_r(v) \neq m_r(v-)$, then, for all $0 \leq a < b \leq 1$, there exists $a \leq t \leq b$ such that the sequence of distributions of $\mathcal Y_n(t)$ is not tight. In particular, the sequence of distributions of $\mathcal Y_n$ considered on $\Doab$ is not tight.
\end{proposition}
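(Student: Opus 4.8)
The plan is to locate, inside any given interval $[a,b]$ with $a<b$, a single point $t_0$ at which $m\circ h$ has a genuine jump, and to prove that the laws of $\mathcal Y_n(t_0)$ are not tight; since the supremum norm is continuous on $\Doab$ (it is invariant under the time changes occurring in Skorokhod convergence) and hence bounded on compact sets, tightness of $\mathcal Y_n$ in $\Doab$ would force tightness of the real random variables $\mathcal Y_n(t_0)$, so the second assertion follows from the first. To find $t_0$, note that for $v=v_1\ldots v_k00\ldots\in\Sigma_0^\infty$ with $v_k\neq0$ and $k\ge2$, formula \eqref{discont_m} of Proposition~\ref{prop:elem}~$i)$ (valid for any initial distribution) gives $m(v)-m(v-)=\pi(v^{(k-1)})\Delta(v_{k-1},v_k)$ with
\[\Delta(i,j):=p_{ij}\Bigl(1+\tfrac{p_{j0}}{1-p_{00}}\Bigr)-p_{i(j-1)}\Bigl(1+\tfrac{p_{(j-1)(b-1)}}{1-p_{(b-1)(b-1)}}\Bigr),\]
and the analogous computation gives $m_r(v)-m_r(v-)=\Delta(r,v_1)$ when $k=1$. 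Since all fundamental probabilities are positive, the hypothesis is equivalent to the existence of $i_0\in\Sigma$ and $j_0\in\Sigma\setminus\{0\}$ with $\Delta(i_0,j_0)\neq0$, a condition on the transition matrix alone; this is precisely why ``some $r$'' is the same as ``all $r$''. Fixing such a pair, for every $w\in\Sigma^*$ the string $v_w:=w\,i_0\,j_0\,00\ldots\in\Sigma_0^\infty$ satisfies $m(v_w)-m(v_w-)=\pi(w i_0)\Delta(i_0,j_0)\neq0$, so $t_w:=F(v_w)$ is a discontinuity point of $m\circ h$ (recall $h(t_w)=v_w$ and $h(t_w-)=v_w-$ from Section~\ref{sec:prel}). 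Because $\{v_w:w\in\Sigma^*\}$ is dense in $(\Sigma^\infty,\mathcal T_\infty)$, $F$ is continuous, and $F(\Sigma^\infty)=[0,1]$, the set $\{t_w:w\in\Sigma^*\}$ is dense in $[0,1]$; choose $t_0=t_w\in(a,b)$ and write $v_0:=h(t_0)=v_w$, $v_0-:=h(t_0-)$, and $\delta:=m(v_0)-m(v_0-)\neq0$.

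With $\ell_n:=\lfloor t_0 n\rfloor+1$ and $Y_n(\ell_n)=Z_n(S_{(\ell_n)})$, the key is the splitting
\[\mathcal Y_n(t_0)=X_n(S_{(\ell_n)})+\sqrt n\,\bigl(m(S_{(\ell_n)})-m(v_0)\bigr),\]
where $X_n$ is the process \eqref{def:proc}. By Theorem~\ref{thm_simple2}, $\|X_n\|=\sup_{v\in\Sigma^\infty}|X_n(v)|$ converges in distribution, hence is a tight sequence of real random variables, and therefore so is $(X_n(S_{(\ell_n)}))_{n\ge1}$, since $|X_n(S_{(\ell_n)})|\le\|X_n\|$ pointwise. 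Consequently all of the diverging mass must come from the second summand, i.e.\ from the deterministic map $m$ evaluated at the random string $S_{(\ell_n)}$.

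Let $\ell\ge1$ be the position of the last non-zero symbol of $v_0$, so that $v_0$ and $v_0-$ agree on the first $\ell-1$ symbols and $(v_0-)^{(\ell)}=v_0^{(\ell-1)}((v_0)_\ell-1)$. For a large constant $K$ put $B_n:=\{S_{(\ell_n)}^{(\ell)}=(v_0-)^{(\ell)}\}\cap\{j(S_{(\ell_n)},v_0-)\ge K\}$. As in the proof of Proposition~\ref{boundarycase}, $\Prob\{S_{(\ell_n)}^{(\ell)}=(v_0-)^{(\ell)}\}=\Prob\{\mathrm{Bin}(n,t_0)\ge\ell_n\}-o(1)\to\tfrac12$ by the central limit theorem (here $t_0\in(0,1)$); on that event $j(S_{(\ell_n)},v_0)=\ell-1<K$, so for fixed $K$ the extra constraint $j(S_{(\ell_n)},v_0-)<K$ forces $\max\{j(S_{(\ell_n)},v_0),j(S_{(\ell_n)},v_0-)\}<K$, an event of probability $o(1)$ by Lemma~\ref{lem:asyS}~$ii)$. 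Hence $\Prob(B_n)\to\tfrac12$. On $B_n$, the inequality $j(S_{(\ell_n)},v_0-)\ge K$ yields $|m(S_{(\ell_n)})-m(v_0-)|\le2\sum_{\ell'>K}r_{\ell'}\le\tfrac{2p_{\max}^{K}}{1-p_{\max}}$ by \eqref{bound:exp}; fixing $K$ so that this bound is $\le|\delta|/2$ we obtain $|m(S_{(\ell_n)})-m(v_0)|\ge|\delta|/2$ throughout $B_n$.

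To finish, fix $M>0$. Tightness of $(\|X_n\|)$ provides $M'$ with $\Prob\{\|X_n\|>M'\}\le\tfrac1{12}$ for all $n$. For $n$ so large that $\Prob(B_n)>\tfrac13$ and $\tfrac{|\delta|}2\sqrt n>M+M'$, on the event $B_n\cap\{\|X_n\|\le M'\}$, whose probability is at least $\tfrac13-\tfrac1{12}=\tfrac14$, the triangle inequality gives $|\mathcal Y_n(t_0)|\ge\tfrac{|\delta|}2\sqrt n-M'>M$. Thus $\Prob\{|\mathcal Y_n(t_0)|>M\}\ge\tfrac14$ for all large $n$, so the laws of $\mathcal Y_n(t_0)$ are not tight, and hence neither are the laws of $\mathcal Y_n$ on $\Doab$. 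The step I expect to require the most care is the third one: one must ensure, with probability bounded away from $0$, not merely that $S_{(\ell_n)}$ carries the correct length-$\ell$ prefix but that it actually descends at least $K$ levels toward $v_0-$, so that $m(S_{(\ell_n)})$ is pinned near $m(v_0-)\neq m(v_0)$; this rests on combining the order-statistic localization of Lemma~\ref{lem:asyS}~$ii)$ with the binomial central limit theorem, exactly the mechanism already developed for Proposition~\ref{boundarycase}. The remaining ingredients are the jump identity of Proposition~\ref{prop:elem} and the soft tightness furnished by Theorem~\ref{thm_simple2}.
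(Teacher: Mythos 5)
Your proof is correct and follows essentially the same strategy as the paper: split $\mathcal Y_n(t_0)=X_n(S_{(\ell_n)})+\sqrt n\,(m(S_{(\ell_n)})-m\circ h(t_0))$, argue that the first summand is tight, and use Lemma~\ref{lem:asyS}~$ii)$ together with the binomial CLT (as in Proposition~\ref{boundarycase}) to show the second summand escapes to infinity with asymptotic probability $\tfrac12$. The only differences are presentational: you deduce tightness of $X_n(S_{(\ell_n)})$ directly from Theorem~\ref{thm_simple2} via $\|X_n\|$, whereas the paper invokes Proposition~\ref{prop:nat}; and you make the density argument for the jump set explicit through the quantity $\Delta(i_0,j_0)$ extracted from \eqref{discont_m}, where the paper simply appeals to ``recursiveness of the model.'' Your version is, if anything, slightly more self-contained.
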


\begin{proof}
From the recursiveness of the model it follows that, if $m_r(v) \neq m_r(v-)$ for some $r \in \Sigma, v \in \Sigma_0^\infty$, then, there exists a set
$A \subseteq \Sigma_0^\infty$ which is dense in $\Sigma^\infty$ (with respect to the topology $\mathcal T_\infty$) such that $m(v) \neq m(v-)$ for all $v \in A$. Accordingly, $B := \{F(v), v \in A\}$ is dense in $[0,1]$ and both $h(t-) \neq h(t)$ and $m\circ h(t-) \neq m \circ h(t)$  for all $t \in B$. Fix $v \in A$ and $t_0 = F(v)$. By Lemma \ref{lem:asyS}, we have
$\max\{j(S_{(\lfloor t_0n \rfloor +1)}, v), j(S_{(\lfloor t_0n \rfloor +1)}, v-) \} \to \infty$ almost surely.  Further, by the argument in the proof of Proposition \ref{boundarycase}
relying on the central limit theorem for the binomial distribution, we have $\ProbB{S_{(\lfloor t_0n \rfloor +1)} \leq v-} \to 1/2$.
With
\begin{align} \label{vn} v_n(t) := \left(m(S_{(\lfloor t n \rfloor +1)}) - m\circ h(t)\right) \sqrt{n}, \quad t \in [0,1], \end{align}
it follows that,  for any
$\varepsilon > 0$ sufficiently small, we have $$\liminf_{n \to \infty} \ProbB{|v_n(t_0)| > \varepsilon \sqrt {n}} \geq 1/2.$$
Clearly, the sequence of distributions of $v_n(t_0), n \geq 1$ is not tight. The same follows for the sequence of distributions of $\mathcal Y_n(t_0)$ from Proposition \ref{prop:nat}. \end{proof}

\medskip \textbf{Remark.}
Proposition \ref{prop:D2} holds analogously upon replacing the function $m \circ h$ in the definition of $\mathcal Y_n$ by either
$$\left(\frac{1}{2}(m\circ h(t) + m \circ h(t-))\right)_{t \in [0,1]}, \quad \text{or} \quad \left(\EE{m(S_{(\lfloor t n \rfloor +1)})}\right)_{t \in [0,1]}.$$

It turns out that continuity and linearity are equivalent for the function $m \circ h$. (This statement is not necessarily true for a Markov source with $p_{ij} = 0$ for some $i,j \in \Sigma$.)
\begin{proposition} \label{prop:elem2} Let $b \geq 2$.
\begin{enumerate}
\item We have $m_r \circ h_r \in \Co$ for some (then all) $r \in \Sigma$ if and only if $m_r(v) = m_r(v-)$ for all $v \in \Sigma_0^\infty$ and some (then all) $r \in \Sigma$. This is the case if and only if, for some $\beta > 0$, we have \begin{align} \label{condd} p_r := p_{0r} = \ldots = p_{(b-1) r} = \frac{1-\beta}{1-\beta^b} \beta^r, \quad \text{for all }r \in \Sigma. \end{align}
(For $\beta = 1$, this means $p_r = 1/b$ for all $r \in \Sigma$.) Unless this condition is satisfied, for all $r \in \Sigma$, the function $m_r \circ h_r$ is discontinuous on a subset of $F_r(\Sigma_0^\infty)$ that is dense in $[0,1]$.
\item For a Markov source with weights chosen as in \eqref{condd}, we have
$$m_0(v) = \ldots = m_{b-1}(v) = \alpha F(v)  + \gamma, $$ where $\alpha, \gamma$ are defined in \eqref{def:ga}. In particular, $m_r \circ h_r(t) = \alpha t + \gamma, t \in [0,1]$.
\item In the uniform model, $m, F$ and $h$ are given by
$$m(v) = \frac{b}{b-1}, \quad F(v) = \sum_{k\geq 1} v_k b^{-k}, \quad t= \sum_{k \geq 1} h(t)_k b^{-k}.$$
In the Bernoulli model (with $b=2$), we have
\begin{align} \label{m_bernoulli} m(v) = \frac{1-2p_0}{p_0(1-p_0)} F(v)  + \frac 1 {1-p_0}. \end{align}
\end{enumerate}
\end{proposition}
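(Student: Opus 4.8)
The plan is to prove the three parts of Proposition~\ref{prop:elem2} essentially by exploiting the recursive identities from Proposition~\ref{prop:elem}~$ii)$ and the explicit formula for discontinuity sizes in Proposition~\ref{prop:elem}~$i)$.

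\textbf{Part $i)$.} First I would observe that the recursive structure of the source makes the statements ``for some $r$'' and ``for all $r$'' equivalent: if $m_r(v) = m_r(v-)$ for one $r$, then using $m_s(v) = 1 + p_{sv_1}m_{v_1}(\bar v)$ we can transfer the property across all initial symbols, and similarly for continuity of $m_r \circ h_r$. Next, I would show that continuity of $m_r \circ h_r$ is equivalent to the no-jump condition $m_r(v) = m_r(v-)$ for all $v \in \Sigma_0^\infty$: since $h_r$ maps $[0,1]$ onto $\Sigma^\infty \setminus \Sigma_{b-1}^\infty$ and is right-continuous with left limits (jumping precisely at $F_r(\Sigma_0^\infty)$ between $h_r(t-)$ and $h_r(t)$, which are of the form $v-$ and $v$), composing with $m_r$ produces a continuous function exactly when $m_r$ does not distinguish $v$ from $v-$; conversely a genuine jump $m_r(v)\neq m_r(v-)$ forces a discontinuity at $t = F_r(v) = F_r(v-)$, and by the recursive density argument (as in the proof of Proposition~\ref{prop:D2}) the set of such $t$ is dense in $[0,1]$. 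Finally I would translate the no-jump condition into the algebraic condition \eqref{condd}: plugging $v = r0\,0\ldots$ into the two formulas in \eqref{discont_m} and equating, one gets for each pair a relation among the $p_{ij}$; iterating over all $v\in\Sigma_0^\infty$ forces the transition probabilities to be independent of the current symbol (so $p_{ij} = p_j$) and then forces the geometric form $p_r = \frac{1-\beta}{1-\beta^b}\beta^r$. The computation is a finite but slightly fiddly manipulation of \eqref{discont_m}; I expect this algebraic reduction to be the main obstacle, since one must check that the family of jump-equations over all $v$ really does pin down the geometric form and not merely some weaker constraint.

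\textbf{Part $ii)$.} Assuming \eqref{condd}, I would guess the affine ansatz $m_r(v) = \alpha F(v) + \gamma$ (independent of $r$, which is consistent since $p_{ij}=p_j$ now) and verify it against the fixed-point characterization in Proposition~\ref{prop:elem}~$ii)$. Substituting into $m_r(v) = 1 + p_{v_1} m_{v_1}(\bar v)$ and using $F(v) = p_{v_1}F(\bar v) + \sum_{k<v_1}p_k$ reduces the identity to a pair of scalar equations in $\alpha,\gamma,\beta$; solving them recovers exactly $\gamma = \frac{1-\beta^b}{\beta-\beta^b}$ and $\alpha = (\beta-1)\gamma$ as in \eqref{def:ga}. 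By the uniqueness part of Proposition~\ref{prop:elem}~$ii)$ (the operator $T$ is a strict contraction), this ansatz is \emph{the} solution, so $m_r = \alpha F + \gamma$ and hence $m_r\circ h_r(t) = \alpha t + \gamma$ because $F\circ h_r(t) = t$ off the countable exceptional set and both sides are (right-)continuous. This part should be routine once the ansatz is in hand.

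\textbf{Part $iii)$.} The uniform model is the case $\beta = 1$ of part $ii)$: then $p_r = 1/b$, $m(v) = b/(b-1)$ is the constant solution of the fixed-point equation, $F(v) = \sum_k v_k b^{-k}$ by direct summation of $\pi(v^{(\ell)})$, and $h$ is its inverse. The Bernoulli case $b=2$ is the instance $\mu_r = p_r$ with $b=2$: here one applies the affine identity from part $ii)$ but with the general two-symbol memoryless weights $p_0, p_1 = 1-p_0$ (no geometric restriction is needed for $b=2$ since any two probabilities trivially have the form \eqref{condd} with $\beta = p_1/p_0$), reads off $\alpha = \frac{1-2p_0}{p_0(1-p_0)}$ and $\gamma = \frac{1}{1-p_0}$ from \eqref{def:ga}, and obtains \eqref{m_bernoulli}. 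I anticipate no real difficulty here beyond bookkeeping.
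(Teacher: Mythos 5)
Your proposal follows essentially the same route as the paper's proof: part $i)$ via the recursive reproduction of discontinuities, the jump formula \eqref{discont_m} to derive first memorylessness and then the geometric form \eqref{condd}; part $ii)$ via the affine ansatz plugged into the fixed-point identity of Proposition~\ref{prop:elem}~$ii)$ with uniqueness from the contraction; and part $iii)$ as a specialization. The only small refinement the paper makes that you gloss over is the specific algebra for memorylessness: one must apply \eqref{discont_m} at positions $k\ge 2$ with two choices of $v_{k-1}$ (say $i$ and $\ell$) and the same $v_k=j$ to get $p_{ij}/p_{\ell j}=p_{i(j-1)}/p_{\ell(j-1)}$, then sum over the second index to conclude $p_{ij}=p_{\ell j}$ — plugging in only $v=r00\ldots$ (length-$1$ prefixes) would not by itself decouple the first index.
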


\begin{korollar} \label{cor:notight}
Consider Radix Selection with a general Markov source not satisfying condition \eqref{condd} for any $\beta > 0$. Then, for all $0 \leq a < b \leq 1$, the sequence of distributions of $\mathcal Y_n, n \geq 1$ considered on
$\Doab$ is not tight.
\end{korollar}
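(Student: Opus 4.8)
The plan is to reduce the statement to Proposition~\ref{prop:D2}, since the latter already contains the substance of the argument. The corollary concerns $\mathcal Y_n$ defined via the centering $m\circ h$ (as in Theorem~\ref{thm_simple}), so we only need to check that the hypothesis of Proposition~\ref{prop:D2} holds under the present assumption. Proposition~\ref{prop:D2} requires that there exist $r\in\Sigma$ and $v\in\Sigma_0^\infty$ with $m_r(v)\neq m_r(v-)$; Proposition~\ref{prop:elem2}~$i)$ tells us that the existence of such a pair $(r,v)$ fails \emph{only} when the transition probabilities satisfy condition~\eqref{condd} for some $\beta>0$. Hence, for a Markov source not satisfying \eqref{condd} for any $\beta>0$, the hypothesis of Proposition~\ref{prop:D2} is automatically met.

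Concretely, the steps I would carry out are: (i) invoke Proposition~\ref{prop:elem2}~$i)$ to conclude that, since the source does not satisfy \eqref{condd} for any $\beta>0$, we have $m_r(v)\neq m_r(v-)$ for some (then all) $r\in\Sigma$ and some $v\in\Sigma_0^\infty$; in fact Proposition~\ref{prop:elem2}~$i)$ gives the stronger statement that $m_r\circ h_r$ is discontinuous on a dense subset of $[0,1]$. (ii) Apply Proposition~\ref{prop:D2} with this $r$: for every $0\le a<b\le 1$ there exists $t\in[a,b]$ such that the sequence of laws of $\mathcal Y_n(t)$ is not tight, and therefore the sequence of laws of $\mathcal Y_n$ on $\Doab$ is not tight. (iii) Record that the scaling used here is exactly the one in the statement of Theorem~\ref{thm_simple}, so nothing further needs to be translated.

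I do not expect any genuine obstacle: the corollary is a packaging of Propositions~\ref{prop:D2} and~\ref{prop:elem2}~$i)$, and the only thing to verify is that their hypotheses line up, which they do verbatim. The one point deserving a sentence of care is the quantifier "some (then all) $r\in\Sigma$": Proposition~\ref{prop:D2} is phrased so that the conclusion follows from the failure of $m_r(v)=m_r(v-)$ for \emph{any single} $r$, and Proposition~\ref{prop:elem2}~$i)$ guarantees that failure holds simultaneously for all $r$ once it holds for one, so there is no gap. Thus the proof is a two-line deduction.

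\begin{proof}
Since the source does not satisfy \eqref{condd} for any $\beta>0$, Proposition~\ref{prop:elem2}~$i)$ implies that $m_r(v)\neq m_r(v-)$ for all $r\in\Sigma$ and some $v\in\Sigma_0^\infty$ (indeed $m_r\circ h_r$ is then discontinuous on a dense subset of $[0,1]$). Hence the hypothesis of Proposition~\ref{prop:D2} is satisfied, and that proposition yields that, for every $0\le a<b\le 1$, the sequence of distributions of $\mathcal Y_n$ on $\Doab$ is not tight.
\end{proof}
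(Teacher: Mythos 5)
Your proof is correct and follows exactly the route the paper intends: the corollary is an immediate consequence of Proposition~\ref{prop:D2} combined with the equivalence in Proposition~\ref{prop:elem2}~$i)$, and the paper itself gives no separate proof precisely because it is this two-line deduction. Your care about the quantifier match (the ``some (then all)~$r$'' in both propositions) is the right thing to note and correctly resolved.
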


We leave it as an open problem to decide whether the one- (or finite-) dimensional marginal distributions of the process $(\mathcal Y_n)_{n \geq 1}$ converge weakly for a general Markov source with transition probabilities different from \eqref{condd}.

\begin{proof} [Proof of Proposition \ref{prop:elem2}]
 First of all, if $m_r \circ h_r$ has a point of discontinuity on $(0,1)$ for some $r \in \Sigma$, then, by Proposition \ref{prop:elem} $iii)$,  this discontinuity reproduces in all functions $m_i \circ h_i, i \in \Sigma$ on all scales. Next, we show that $m_r(v) = m_r(v-)$ for all $r \in \Sigma, v \in \Sigma_0^\infty$ implies that the source is memoryless. To this end, using \eqref{discont_m} once with $v_{k-1} = i, v_k=j$  and then with $v_{k-1} = \ell, v_k = j$ where
 $i, \ell \in \Sigma$ and $j = 1, \ldots, b-1$ shows that $p_{ij} / p_{\ell j} = p_{i(j-1)} /p_{\ell (j-1)}$. By induction over $j$ we obtain
 $$\frac{p_{ij}}{p_{\ell j}} =  \frac{p_{im}}{p_{\ell m}}, \quad \text{or } p_{ij} p_{\ell m} = p_{\ell j} p_{im}  \quad \text{for all } i,j,\ell, m \in \Sigma.$$
Summation over $m$ reveals $p_{ij} = p_{\ell j}$. As $i, j, \ell$ are arbitrary, the source is memoryless.
The condition $m_r(v) = m_r(v-)$ for all $r \in \Sigma, v \in \Sigma_0^\infty$ and \eqref{discont_m} imply that, for a memoryless source and $r =1, \ldots, b-1$, we have $$p_r = p_{r-1} \frac{1-p_0}{1-p_{b-1}} = p_0 \left( \frac{1-p_0}{1-p_{b-1}}\right)^r.$$ It is now straightforward to verify that \eqref{condd} yields the set of all possible solutions. This concludes the proof of $i)$.
 To show $ii)$, as $m = m_0 = \ldots =m_{b-1}$, we only need to prove that the function $\alpha F_0(v) + \gamma$ solves the equation stated in Proposition \ref{prop:elem} $iii)$. This boils down to a routine calculation using the corresponding identity for $F$ also stated in Proposition  \ref{prop:elem} $iii)$, that is,
 $F(v) = p_0 + \cdots + p_{v_1 - 1} + p_{v_1} F(\bar v)$. $iii)$ follows from $ii)$ upon computing the relevant values of $\alpha$ and $\gamma$.
\end{proof}

The remainder of this section is devoted to the proof of Theorem \ref{thm_simple}. Subsequently, we assume that \eqref{goodcase} holds. Then, recalling
\eqref{vn}, we have
$$\mathcal Y_n(t) = \frac{Z_n(S_{(\lfloor t n \rfloor +1)}) - m(S_{(\lfloor t n \rfloor +1)})n}{\sqrt{n}} + v_n(t).$$
For $\beta = 1$ we have $v_n = 0$, and the statement follows immediately from Proposition \ref{prop:nat}.

For $\beta \neq 1$, we start as for the process $Z_n$ with a recursive decomposition. By the memorylessness of the source, we may drop the index $r$.
To simplify notation, we set
\begin{align}\label{convention}
[a,1):= [a,1] \quad \mbox{for} \quad 0<a<1.
 \end{align}
Setting $F^n_{-1} := 0$ and $F^n_r = \sum_{j=0}^r I^n_j$, $r \in \Sigma$, we obtain (under convention \eqref{convention})
\begin{align*}
\mathcal Y_n &\stackrel{d}{=} \left( \sum_{r=0}^{b-1} {\bf 1}_{\big [\frac{F^{n}_{r-1}}{n}, \frac{F^{n}_{r}}{n}\big )}(t) \left(\sqrt{\frac{I^{n}_r}{n}}
\mathcal Y^{(r)}_{I^{n}_r}
\left(\frac{nt-F^n_{r-1}}{I_r^{n}}\right) +  \frac{\gamma I^{n}_r - \alpha F_{r-1}^n - (\gamma-1) n}{\sqrt{n}}
\right)\right)_{0\le t\le 1}, 
\end{align*}
with conditions on independence and distributions as in (\ref{rec_rn1}). (As opposed to the situation in \eqref{rec_rn1}, the sequences $(\mathcal Y^{(0)}_n), \ldots, (\mathcal Y^{(b-1)}_n)$ are now identically distributed.)

To associate to recurrence  (\ref{rec_mod}) a limit equation in the spirit of the contraction method, we introduce  a family of parameter transformations: for $0 \leq a < b \leq 1$, under convention \eqref{convention}, let
\begin{align*}
\mathfrak{A}_{a,b}: \Do \to \Do, \quad \mathfrak{A}_{a,b}(f)(t) = {\bf 1}_{[a,b)} f \left( \frac{t-a}{b-a}\right).
\end{align*}
Then we associate the limit equation (again with convention (\ref{convention}))
\begin{align}\label{fpe_rn_G}
G\stackrel{d}{=}\sum_{r=0}^{b-1} \sqrt{p_r} \mathfrak{A}_{\sum_{k=0}^{r-1} p_k, \sum_{k=0}^r p_k} (G_r)+ {\bf 1}_{\left[\left.\sum_{k=0}^{r-1} p_k, \sum_{k=0}^r p_k\right)\right.}(\cdot) \left( \gamma N_r - \alpha \sum_{k=0}^{r-1} N_k \right),
\end{align}
where $G_0,\ldots, G_{b-1}, N$ are independent, and $G_0, \ldots, G_{b-1}$ are distributed like $G$ while \sloppy $N = (N_0, \ldots, N_{b-1})$ is distributed as in \eqref{limN}. (Note that the additive term on the right hand side has mean zero for all $r \in \Sigma.$) The covariance function can be computed recursively:  first, let $b = 2$.
For $s\in [0,p_0), t\in [p_0,1]$, \eqref{fpe_rn_G} implies that
\begin{align*}
\EE{G(s)G(t)}=\EE{\gamma N_0 (\gamma N_1 -\alpha N_0)}
\end{align*}
since $G_0$, $G_1$ and $(N_0, N_1)$ are independent. $N_0 = - N_1$ and the definition of $\alpha,\gamma$ yield $\EE{G(s)G(t)} =-1$.
Now consider the case $(s,t)\in [0,p_0)^2 \cup [p_0,1)^2$. For $x\in[0,1]$ let
\begin{align*}
\bar x =\begin{cases}  x/ {p_0},&\text{ if }x<p_0,\\ (x-p_0)/p_1,&\text{ otherwise.}\end{cases}
\end{align*}
Finally, let $w_1={\bf 1}_{[p_0,1]}(t)$. Then, \eqref{fpe_rn_G} implies
\begin{align*}
\EE{G(s)G(t)}&=p_{w_1} \EE{G(\bar s) G(\bar t)}+\V\left(\gamma N_{w_1}-\alpha \sum_{k=1}^{w_1-1} N_k\right)\\
&=p_{w_1} \EE{G(\bar s) G(\bar t)}+\frac{p_{w_1}}{1-p_{w_1}}.
\end{align*}
Iterating this formula  yields \eqref{CV_G_pf}. The same strategy can be worked out in the general case $b\geq 2$, but explicit formulas become rather heavy. Therefore, we only state the result for the variance: for $t \in [0,1]$, we have
\begin{align} \label{exp_var}
\text{Var}(G(t)) = \E[G(t)^2]&=\frac{1-\beta^{b+1}}{\beta-\beta^b} (\alpha t + \gamma) - \frac{\beta(1-\beta^b)^2}{(\beta-\beta^b)^2}.
\end{align}
A formal proof of the functional convergence $\mathcal Y_n \to G$ requires slight modifications of the arguments in the proof of Theorem \ref{thm_simple2} and the aligning of discontinuity points worked out in the proof of Proposition \ref{prop:nat}. Most of these steps are not novel at this point, and we remain brief.

\begin{proof}[Proof of Theorem \ref{thm_simple}]
We work in the setting of the proof of Theorem \ref{thm_simple2}. We assume $\beta \neq 1$, that is, $\alpha \neq 0$. As in the construction of the limit process $H$, let $\tilde H_0^v = 0$ for all $v \in \Sigma^*$ and, recursively, for $v \in \Sigma^*, n \geq 0,$
$$\tilde H_{n+1}^v(w) = \sqrt{p_{w_1}} \tilde H_n^{vw_1}(\bar w) + \gamma N^v_{w_1} - \alpha \sum_{k=0}^{w_1-1} N_k^v.$$
As in \eqref{boundp}, we obtain a bound of the form
\begin{align} \label{htilde} \EE{\|\tilde H_{n+1}^v - \tilde H_n^v \|^3} \leq C q^n, \quad C > 0, 0 < q < 1.\end{align}
In particular, there exists a family of $\Co$-valued random variable $\{\tilde H^v : v \in \Sigma^*\}$ such that, almost surely, $\|\tilde H_n^v - \tilde H^v\| \to 0$ and
$$\tilde H^v(w) = \sqrt{p_{w_1}} \tilde H^{v w_1} (\bar w) + \gamma N^v_{w_1} - \alpha \sum_{k=0}^{w_1-1} N_k^v, \quad w \in \Sigma^\infty.$$
Analogously to \eqref{inf_sum_rep}, we have
\begin{align*}  \tilde H^v(w) = \sum_{s=0}^{\infty} \sqrt{\pi(w^{(s)})}
\left(\gamma N^{(w_s), vw^{(s)}}_{w_{s+1}} - \alpha \sum_{k=0}^{w_{s+1}-1} N^{(w_s), vw^{(s)}}_k\right), \quad w \in \Sigma^\infty.
\end{align*}
Clearly, $\tilde H^v(00\ldots) = H^v(00 \ldots)$ and $\tilde H^v(11\ldots) = H^v(11 \ldots)$, but, for $w \notin \{00\ldots, 11\ldots\}$, almost surely, $\tilde H^v(w) \neq H^v(w)$. We set $G^v = \tilde H^v \circ h$. Next,
we choose $Y_n^v(k) = Z_n^v(V^{k,v}_{n}), 1 \leq k \leq n$, to obtain versions of the process $Y_n$.
By construction, for the corresponding processes $\mathcal Y_n^v(t), t \in [0,1]$, upon defining $F^{n,v}$ in the obvious way, we obtain (again under convention \eqref{convention})
$$\mathcal Y_n^v(t) = \sum_{r = 0}^{b-1} \sqrt{\frac{I_r^{n,v}}{n}} \mathfrak{A}_{\frac{F^{n,v}_{r-1}}{n}, \frac{F^{n,v}_{r}}{n}} \left(\mathcal Y^{vr}_{I_r^{n,v}}\right) +
 \sum_{r = 0}^{b-1} {\bf 1}_{\big [\frac{F^{n,v}_{r-1}}{n}, \frac{F^{n,v}_{r}}{n}\big )}(t) \frac{\gamma I^{n,v}_r - \alpha F_{r-1}^{n,v} - (\gamma-1) n}{\sqrt{n}}, \quad t \in [0,1].$$
In order to connect $\mathcal Y_n^v$ to $G^v$, define an accompanying sequence recursively as follows: let $R_0^v = 0$ for all $v \in \Sigma^*$, and, for $n \geq 1$, with convention \eqref{convention},
$$R_n^v(t) = \sum_{r = 0}^{b-1} \sqrt{p_r} \mathfrak{A}_{\frac{F^{n,v}_{r-1}}{n}, \frac{F^{n,v}_{r}}{n}} \left( R^{vr}_{I_r^{n,v}}\right) +
 \sum_{r = 0}^{b-1} {\bf 1}_{\big [\frac{F^{n,v}_{r-1}}{n}, \frac{F^{n,v}_{r}}{n}\big )}(t) \left(\gamma N^v_{r} - \alpha \sum_{k=0}^{r-1} N_k^v\right), \quad t \in [0,1].$$
The convergence $\EE{\|\mathcal Y_n^v - R_n^v\|^3} \to 0$ uses the contraction argument underlying the proof of Theorem \ref{thm_simple2} based on
\eqref{startingpoint} and \eqref{cont_con}. We omit to repeat these steps. By defining $\lambda^v_n$ ($h_n^v$, respectively) in
the trie constructed from $\mathcal V^v_{ n}$ as $\lambda_n$ ($h_n$, respectively)  in the proof of Proposition \ref{prop:nat}, we can guarantee \eqref{conv_dsk}, that is, in probability, $\| \lambda_n^v - \text{id} \| \to 0$. Further, we have, almost surely,
$$\|R_n^v \circ \lambda^v_n - G^v \| \leq 2 \sum_{k= h_n^v}^\infty \|\tilde H^v_{k+1} - \tilde H^v_k\|.$$ Upon recalling \eqref{htilde}, the right hand side tends to zero in probability as $h^v_n \to \infty$ in probability. This concludes the proof.
\end{proof}

\section{Grand averages and worst case rank}\label{sec:markov:av}

\subsection{The model of grand averages}\label{sec:markov:av1}

We now consider  the complexity of Radix Selection with $b\geq2$ buckets assuming the Markov source model for the data and the model of grand averages for the rank. \begin{proposition} \label{propZ}
Let $M$ have the  multinomial $(1; \mu_0, \ldots, \mu_{b-1})$ distribution and $\xi$ be uniformly distributed on $[0,1]$.
The distribution of $Z := m \circ h(\xi)$ is given by
\begin{align}\label{nn3101}
Z\stackrel{d}{=} 1 + \sum_{k=0}^{b-1} M_k \mu_k Z_k,
\end{align}
where $M,Z_0, \ldots, Z_{b-1}$ are independent, and the distributions of $Z_0,  \ldots, Z_{b-1}$ are the unique solutions of the following system:
\begin{align*} 
Z_r &\stackrel{d}{=} 1 + \sum_{k=0}^{b-1} M^{(r)}_{k} p_{rk} Z_k, \quad r \in \Sigma.
\end{align*}
Here, $M^{(0)}, \ldots, M^{(b-1)}, Z_0, \ldots, Z_{b-1}$ are independent and, for $r \in \Sigma$,  $M^{(r)}$ has the multinomial $(1; p_{r0}, \ldots, p_{r(b-1)})$ distribution. Further,
$$\EE{Z} = \sum_{v \in \Sigma^*} \pi(v)^2 = 1 + \sum_{k=0}^{b-1} \mu_k^2 \EE{Z_k}.$$
For $b = 2$, we have
\begin{align*}
\EE{Z_0} & = \frac{1+p_{01}^2-p_{11}^2}{2(p_{00}+p_{11})(1+p_{00}p_{11})-2(p_{00}+p_{11})^2}, \\
\EE{Z_1} & = \frac{1+p_{10}^2-p_{00}^2}{2(p_{00}+p_{11})(1+p_{00}p_{11})-2(p_{00}+p_{11})^2}.
\end{align*}
Similarly,
$$\EE{Z^2} = \sum_{v \in \Sigma^*} \pi(v)^2 \left( \pi(v) + 2  \sum_{w \preceq v, w \neq v} \pi(w)\right)  = 2 \EE{Z}-1 + \sum_{k=0}^{b-1} \mu_k^3 \EE{Z^2_k}.$$

\end{proposition}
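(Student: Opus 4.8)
The plan is to derive all identities in Proposition \ref{propZ} from the single structural fact that $Z=m\circ h(\xi)\stackrel{d}{=}m(S_1)$, which follows from Corollary \ref{thm:maingrand}, together with the recursive decomposition of $m$ from Proposition \ref{prop:elem} $ii)$. The fixed-point identity \eqref{nn3101} is obtained as follows: condition on the first symbol $S_{1,1}$ of $S_1$, which equals $k$ with probability $\mu_k$, and recall $m(S_1)=1+\mu_{S_{1,1}}m_{S_{1,1}}(\bar S_1)$; since the tail $\bar S_1$ of the Markov chain, conditionally on $S_{1,1}=k$, is a Markov chain with initial distribution $\mu^k$, the quantity $m_{k}(\bar S_1)$ has the distribution of the variable $Z_k$ defined by the analogous construction in model $\mu^k$. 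Encoding the choice of first symbol by the multinomial vector $M$ gives $Z\stackrel{d}{=}1+\sum_k M_k\mu_k Z_k$ with the stated independence, and the system for the $Z_r$ arises in exactly the same way from $m_r(v)=1+p_{rv_1}m_{v_1}(\bar v)$. Uniqueness of the solution to the system follows from the contraction/uniqueness statement for $m_0,\dots,m_{b-1}$ in Proposition \ref{prop:elem} $ii)$, or directly: the map sending a tuple of laws to the law of the right-hand side is a contraction in, say, the minimal $L_2$ (Wasserstein) metric because each coefficient $p_{rk}\le p_{\max}<1$ and exactly one summand is selected, forcing the contraction factor $p_{\max}<1$.

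Next I would compute the moments by combining the fixed-point equations with the series expression $m(v)=\sum_{w\preceq v}\pi(w)$. Taking expectations in \eqref{nn3101} and using $\EE{M_k}=\mu_k$ and independence yields $\EE{Z}=1+\sum_k\mu_k^2\EE{Z_k}$; iterating (equivalently, using $\EE{Z}=\EE{m(S_1)}=\EE{\sum_{w\preceq S_1}\pi(w)}=\sum_{w\in\Sigma^*}\pi(w)\ProbB{w\preceq S_1}=\sum_{w\in\Sigma^*}\pi(w)^2$) gives the closed form $\EE{Z}=\sum_{v\in\Sigma^*}\pi(v)^2$. For $b=2$ the system $\EE{Z_0}=1+p_{00}^2\EE{Z_0}+p_{01}^2\EE{Z_1}$, $\EE{Z_1}=1+p_{10}^2\EE{Z_0}+p_{11}^2\EE{Z_1}$ is a $2\times2$ linear system; solving it (using $p_{01}=1-p_{00}$, $p_{10}=1-p_{11}$ to simplify the determinant) produces the displayed rational expressions. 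For the second moment, squaring \eqref{nn3101}, expanding, and using that $M_k^2=M_k$ and $M_jM_k=0$ for $j\ne k$ (only one bucket is hit) gives $\EE{Z^2}=1+2\sum_k\mu_k^2\EE{Z_k}+\sum_k\mu_k^3\EE{Z_k^2}$, and the middle term equals $2(\EE{Z}-1)$ by the mean identity, yielding $\EE{Z^2}=2\EE{Z}-1+\sum_k\mu_k^3\EE{Z_k^2}$. The combinatorial closed form follows from $\EE{Z^2}=\EE{m(S_1)^2}=\EE{(\sum_{w\preceq S_1}\pi(w))^2}=\sum_{v,w}\pi(v)\pi(w)\ProbB{v,w\preceq S_1}$, and since $v\preceq S_1$ and $w\preceq S_1$ forces one of $v,w$ to be a prefix of the other, splitting into $v=w$, $w\prec v$, $v\prec w$ and using $\ProbB{v\preceq S_1}=\pi(v)$ gives $\sum_v\pi(v)^2(\pi(v)+2\sum_{w\preceq v,w\ne v}\pi(w))$.

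The only genuine subtlety, and the step I would be most careful about, is the precise meaning of "unique solutions'' and the matching of the abstractly-defined $Z_r$ with $m_r(\bar S_1)$; this requires noting that the conditional law of the Markov chain's tail is again a Markov chain with the shifted initial law $\mu^r$ from \eqref{rini}, and then invoking either the contraction argument or Proposition \ref{prop:elem} $ii)$ to pin down the laws. All remaining computations are routine linear algebra and rearrangements of the prefix-probability sums, so I would present them compactly.

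Overall structure of the write-up: first establish $Z\stackrel{d}{=}m(S_1)$ and the two systems of distributional fixed-point equations via conditioning on first symbols; then prove uniqueness by a contraction in the Wasserstein-$2$ metric with factor $p_{\max}<1$; then read off $\EE{Z}$, $\EE{Z_r}$ (solving the linear system for $b=2$), and $\EE{Z^2}$ both from the fixed-point equations and, for the closed forms, from the identity $m(v)=\sum_{w\preceq v}\pi(w)$ together with the prefix-comparability dichotomy.
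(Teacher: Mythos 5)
Your derivation of the system~\eqref{nn3101} and of the moment identities is correct and matches the paper's: both start from the recursion $m(v)=1+\mu_{v_1}m_{v_1}(\bar v)$, $m_r(v)=1+p_{rv_1}m_{v_1}(\bar v)$ from Proposition~\ref{prop:elem}, condition on the first symbol, and then read off $\EE{Z}$, $\EE{Z_r}$ and $\EE{Z^2}$ (including the closed forms $\sum_v\pi(v)^2$ and $\sum_v\pi(v)^2(\pi(v)+2\sum_{w\prec v}\pi(w))$) from either the fixed-point equations or the series $m(v)=\sum_{w\preceq v}\pi(w)$, using $M_jM_k=0$ for $j\neq k$ and the fact that any two prefixes of $S_1$ are comparable. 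Where you genuinely diverge from the paper is the uniqueness step. The paper iterates the system until the randomly-chosen index first returns to $r$: with $A_r$ the set of words whose last symbol is $r$ and whose earlier symbols avoid $r$, and $V$ random on $A_r$ with $\Prob\{V=v\}=\pi_r(v)$, one lands on the one-dimensional perpetuity $Z_r\stackrel{d}{=}AZ_r+B$ with $(A,B)=(\pi_r(V),\sum_{w\prec V}\pi_r(w))$ and invokes Vervaat's uniqueness theorem \cite[Theorem 1.5]{Vervaat}. You instead keep the $b$-dimensional system and argue directly that the induced map on $b$-tuples of laws is a contraction in the minimal $L_2$ metric, which is correct: coupling the $Z_k$'s optimally and using the same $M^{(r)}$, the exclusivity of the multinomial indicator gives $W_2(\nu_r',\tilde\nu_r')^2\le\sum_k p_{rk}^3\,W_2(\nu_k,\tilde\nu_k)^2\le p_{\max}^2\max_k W_2(\nu_k,\tilde\nu_k)^2$. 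This is more self-contained (no outside perpetuity theorem), but note it yields uniqueness a priori only within $\mathcal P_2$; the paper's Vervaat route gives uniqueness in a wider class essentially for free. For this particular system the gap is harmless, since any solution tuple is automatically bounded by $1/(1-p_{\max})$ after iterating the (positive-coefficient, geometrically decaying) recursion, so the $W_2$ fixed point is the only one — but if you use the contraction route you should spell this boundedness reduction out, or restrict the fixed-point space to measures supported on $[1,(1-p_{\max})^{-1}]$ from the start.
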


\begin{proof}
\eqref{nn3101} is a direct consequence of Proposition \ref{prop:elem} $iii$).
Iterating the system \eqref{nn3101} shows that $\Law(Z_r)$ satisfies a one-dimensional fixed-point equation.
Let $A_r = \{ v  \in \Sigma^* : v_1, \ldots, v_{k-1} \in \Sigma \setminus \{r\}, v_k = r, k \geq 1\}$. Now, let $(A,B)= (\pi_r(V), \sum_{w \preceq V, w \neq V} \pi_r(w))$ where $\Prob\{V = v\} = \pi_r(v)$, $v \in A_r$.
 Then $\Law(Z_r)$ satisfies $Z_r \stackrel{d}{=} A Z_r + B$ where $Z_r, (A,B)$ are independent. It is well-known that fixed-point equations of this type have unique solutions (in distribution) under very mild conditions \cite[Theorem 1.5]{Vervaat}.  The formulas for expectations and second moments follows immediately from the system of fixed-point equations.
\end{proof}

In principle, the system of fixed-point equations allows to obtain explicit expressions for higher moments of the limiting distributions. However, precise formulas are lengthy and provide little insight.

\medskip \noindent \textbf{Remark:} For $b=2$, in the \emph{anti-symmetric} case, that is, $p := p_{00} = p_{11}$, a symmetry argument shows that $\Law(Z_0) = \Law(Z_1)$, and that this distribution is characterized by the fixed-point equation
$$ Z_0 \stackrel{d} = (B_{p} p + (1-B_p)(1-p)) Z_0 +1,$$
where $B_p$ has the Bernoulli distribution with success probability $p$ and $B_p, Z_0$ are independent. This is the same fixed-point equation as in the symmetric Bernoulli case $p:= p_{00} = p_{10}$. From \eqref{m_bernoulli} we know that, in distribution, $$Z_0 = Z_1 = \frac{1-2p}{p(1-p)} \xi + \frac{1}{1-p}.$$ Note that this is consistent with Figures \ref{subfig_b} and \ref{subfig_d} on page \pageref{subfig_a}. In both figures, the (closure of the) images of both red and blue functions are equal to the interval $[1/\max(p_{00}, p_{01}), 1/\min(p_{00}, p_{01})]$.
In the general case, the limiting distributions are harder to describe.   By classical results going back to Grincevi{\v{c}}jus \cite{Grin}, it is well-known that, under very mild conditions, perpetuities  such as $\Law(Z_0)$ and $\Law(Z_1)$ for $b=2$ are either absolutely continuous, singularly continuous or discrete.
It is easy to see that both laws are non-atomic, and we leave a more elaborate discussion of their properties for future work.

\subsection{The worst case rank model}\label{sec_worst_case_rank}

We now discuss the worst case rank model. Theorem \ref{thm:mainsup} follows easily from Theorem  \ref{thm_simple2}.

\begin{proof} [Proof of Theorem \ref{thm:mainsup}] As in Section \ref{sec:uniform}, set $X_n(v) = (Z_n(v) - m(v)n) / \sqrt{n}$. By Theorem \ref{thm_simple2} and Skorokhod's representation theorem, we may assume that $\| X_n - H \| \to 0$ almost surely.  For $M_n = \sup_{v \in \Sigma^\infty} Z_n(v)$, we have

$$\liminf_{n \to \infty} \frac{ M_n - m_{\text{max}} n}{\sqrt{n}} \geq \sup_{v \in \Sigma_{\text{max}}^\infty} H(v).$$
Thus, we need to show that
\begin{align} \label{suppart} \limsup_{n \to \infty} \frac{ M_n - m_{\text{max}} n}{\sqrt{n}} \leq \sup_{v \in \Sigma_{\text{max}}^\infty} H(v).\end{align}
Let $\varepsilon > 0$. By uniform continuity of $H$ and uniform convergence of $X_n, n \geq 0$, there exist (random) $M, N_0 \in \N$ such that
$$|X_n(v) - H(w)| \leq \varepsilon \quad  \text{for all } n \geq N_0, j(v,w) \geq M.$$
Further, by uniform continuity of $m$, there exists $\varepsilon_1 > 0$ such that
$$m(v) \geq m_{\text{max}} - \varepsilon_1 \Rightarrow j(v, w) \geq M \text{ for some } w \in \Sigma_{\text{max}}^\infty.$$
In the remainder of the proof assume $n \geq N_0$. By construction, on the one hand,
$$\frac{Z_n(v) - m_{\text{max}}n}{\sqrt{n}} \leq \sup_{v \in \Sigma_{\text{max}}^\infty} H(v) + \varepsilon \quad \text{for all } v \in \Sigma^\infty  \text{ with } j(v,w) \geq M \text{ for some } w \in \Sigma_{\text{max}}^\infty.$$
On the other hand, if $j(v, w) < M$ for all $w \in \Sigma_{\text{max}}^\infty$, then $m(v) \leq m_{\text{max}} - \varepsilon_1$ and therefore
$$\frac{Z_n(v) - m_{\text{max}}n}{\sqrt{n}} \leq  \sup_{v \in \Sigma^\infty} H(v) + \varepsilon - \varepsilon_1 \sqrt n.$$
Hence, $$\limsup_{n \to \infty} \frac{ M_n - m_{\text{max}} n}{\sqrt{n}} \leq \sup_{v \in \Sigma_{\text{max}}^\infty} H(v) + \varepsilon.$$
As $\varepsilon$ was chosen arbitrarily, we obtain \eqref{suppart} concluding the proof of the distributional convergence.

For the convergence of the moments note that the proof of $\E[\| X_n^{(r),v} -H_r^v\|^3] \rightarrow 0$  in the verification of Theorem \ref{thm_simple2} can easily be extended to show that, for any $p\geq 3$,
$$\E[\| X_n^{(r),v} -H_r^v\|^p]  \rightarrow 0.$$
Since
$$\inf_{v \in \Sigma^\infty} X_n(v) \leq   \frac{M_n- m_{\text{max}}n}{\sqrt{n}} \leq \sup_{v \in \Sigma^\infty} X_n(v),$$
this concludes the proof.
\end{proof}

\medskip \textbf{Remark.} Our proof of the distributional convergence in Theorem \ref{thm:mainsup} extends straightforwardly to any sequence of random variables with values in the space of continuous functions on an arbitrary compact metric space $K$ satisfying a functional convergence as in Theorem \ref{thm_simple2}.

\medskip \noindent  In the context of centered continuous Gaussian processes, it is well known that boundedness of the variance function leads to bounds on the variance and the tails of the supremum. The following results follow directly from, e.g., Theorem 5.8 in \cite{boluma13}.
\begin{proposition}\label{suptails}
With $H$ in Theorem \ref{thm_simple2} let $\sigma_{\text{max}}^2 = \max_{v \in \Sigma^\infty} \EE{H(v)^2}$.
For the supremum $S=\sup_{v \in \Sigma^\infty}H(v)$ and $t>0$, we have
$$\Prob(|S-\E[S]|\geq t)\leq 2\exp\left(-\frac{t^2}{2\sigma_{\text{max}}^2} \right).$$
Moreover, $$\V(S) \leq \sigma_{\text{max}}^2.$$
For a memoryless source, we have
$$\sigma_{\text{max}}^2 = \sup_{v \in \Sigma^\infty} m(v)(1-m(v)) + 2 \sum_{k=1}^\infty k \pi(v^{(k)}).$$ In the uniform model, $\sigma_{\text{max}}^2 = \frac{b}{(b-1)^2}$.

The analogous bounds apply to the process $G$ in Theorem \ref{thm_simple}.
\end{proposition}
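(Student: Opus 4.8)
The plan is to deduce both inequalities from the standard concentration result for suprema of Gaussian processes --- Theorem 5.8 in \cite{boluma13}, which provides simultaneously the sub-Gaussian tail bound and the Poincar\'e-type variance bound --- once the hypotheses are checked and $\sigma_{\max}^2$ is computed. By Theorem \ref{thm_simple2}, $H$ is a centered Gaussian process with almost surely continuous paths on the compact metric space $(\Sigma^\infty, d_a)$; fixing a countable dense $D \subseteq \Sigma^\infty$ one therefore has $S = \sup_{v \in D} H(v)$ almost surely, so $S$ is measurable, and standard Gaussian integrability (Fernique's theorem) gives $\EE{|S|} < \infty$. The variance function $v \mapsto \EE{H(v)^2} = \V(j(v,S_1)) = \sum_{k\geq1}(2k-1)\pi(v^{(k)}) - (m(v)-1)^2$ is continuous on $\Sigma^\infty$: each term $\pi(v^{(k)})$ is locally constant in $\mathcal T_\infty$, the tail $\sum_{k>K}(2k-1)\pi(v^{(k)})$ is dominated by $\sum_{k>K}(2k-1)p_{\max}^{k-1}$, which tends to $0$ uniformly in $v$ by \eqref{bound:exp}, and $m \in \Co$; hence the supremum defining $\sigma_{\max}^2$ is attained and finite, and Theorem 5.8 in \cite{boluma13} applies and yields exactly the asserted tail and variance bounds.

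To identify $\sigma_{\max}^2$ explicitly I would apply Proposition \ref{prop:weaker} $iii)$ with $w = v$: since $j(v,v) = \infty$ and $\sum_{k\geq1}\pi(v^{(k)}) = m(v)-1$, a short computation gives
\begin{align*}
\EE{H(v)^2} &= 2\sum_{k\geq1} k\,\pi(v^{(k)}) - (m(v)-1) - (m(v)-1)^2 \\
&= m(v)(1-m(v)) + 2\sum_{k\geq1} k\,\pi(v^{(k)}),
\end{align*}
which is the claimed formula (for a memoryless source $\pi(v^{(k)}) = p_{v_1}\cdots p_{v_k}$). In the uniform model the covariance expression in Theorem \ref{thm_simple2} with $v = w$ gives $\EE{H(v)^2} = b/(b-1)^2$ for every $v$, hence $\sigma_{\max}^2 = b/(b-1)^2$; this matches the previous display, where $m \equiv b/(b-1)$ gives $m(v)(1-m(v)) = -b/(b-1)^2$ and $2\sum_{k\geq1} k b^{-k} = 2b/(b-1)^2$.

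For $G$ the same scheme works with the c{\`a}dl{\`a}g property in place of continuity: by Theorem \ref{thm_simple}, $G \in \Do$ almost surely, so $\sup_{t\in[0,1]} G(t) = \sup_{t\in\mathbb Q\cap[0,1]} G(t)$ almost surely, a supremum of countably many jointly centered Gaussian variables; by \eqref{exp_var} the map $t\mapsto\V(G(t))$ is affine, so $\max_{t\in[0,1]}\V(G(t))$ is attained at $t=0$ or $t=1$ and is finite, and Theorem 5.8 in \cite{boluma13} again applies verbatim. Alternatively, with the coupling $G = \tilde H \circ h$ constructed in the proof of Theorem \ref{thm_simple}, $\tilde H$ is a continuous Gaussian process on the compact space $\Sigma^\infty$ and $h([0,1])$ is dense, so $\sup_{t\in[0,1]}G = \sup_{v\in\Sigma^\infty}\tilde H$, and the argument of the first paragraph applies directly to $\tilde H$.

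I do not expect any real obstacle here: the heavy lifting is the cited Gaussian concentration inequality, and the only points needing care are the measurability/separability reductions (that the supremum over the uncountable index set $\Sigma^\infty$, resp.\ over $[0,1]$, coincides almost surely with a supremum over a countable set, which rests on sample-path regularity) and the short closed-form evaluation of $\sigma_{\max}^2$.
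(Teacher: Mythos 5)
Your proposal is correct and takes essentially the same route as the paper, which simply cites Theorem 5.8 in Boucheron--Lugosi--Massart and leaves the remaining verifications implicit. The additional details you supply (separability/measurability of the supremum, continuity of the variance function so that $\sigma_{\max}^2$ is attained and finite, the evaluation of $\E[H(v)^2]$ from Proposition \ref{prop:weaker}~$iii)$ with $v=w$, and the c\`adl\`ag or $G=\tilde H\circ h$ reduction for the process $G$) are accurate and consistent with the paper's brief justification.
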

Finally, let us discuss the structure of the set $\Sigma_{\text{max}}$ in the context of some examples. For $r \in \Sigma$, we write $\Sigma^r_{\text{max}}$ for $\Sigma_{\text{max}}$ if the initial distribution is $\mu^{r}$ defined in \eqref{rini}.

\medskip \textbf{Example I: The unique case.} For almost all choices of transition probabilities, the set $\Sigma_{\text{max}}$ contains exactly one element. The situation in Theorem \ref{thm_simple} (with $\beta \neq 1$) yields just one possible example.

\medskip \textbf{Example II: The finite case.} Let $b=2$. It is easy to construct a source with $\Sigma_{\text{max}} = \{00\ldots, 11\ldots \}$ setting
$p_{00} = p_{11}>1/2$ and $\mu_0 = \mu_1 = 1/2$. The situation is more complicated for the set $\Sigma_{\max}^0$ since, for $p_{00} = p_{11}$, this set is
not finite. Let $b = 4$. Then, it should be clear that, if we choose $p_{11}, p_{23}, p_{31}$ very close to 1  and $p_{02},
p_{03}$ very close to $1/2$, then only the strings $2311\ldots$ and $311\ldots$ can lie in $\Sigma_{\max}^0$. A straightforward calculation shows that this set contains both strings if we choose $p_{11} = p_{31} = 1-\varepsilon$, $p_{02} = 1/2 + \varepsilon, p_{03} = 1/2 - 2 \varepsilon, p_{23} = 1 - \varepsilon(2\varepsilon+7)/(2 \varepsilon+1)$ and $\varepsilon$ sufficiently small.

\medskip \textbf{Example III: The countable case.} Let $b = 4$ and $p_{00} = p_{01} = p_{11} = 1/3$, $p_{02} = p_{03} = 1/6$, $p_{10} = p_{12} = p_{13} = 2/9$. Further, let $p_{2r} = p_{3r} = 1/4$ for all $r \in \Sigma$. Then, $$\Sigma_{\max}^0 = \{v11\ldots : v = 00\ldots0 \in \Sigma^*\} \cup \{00\ldots\}.$$
$\Sigma_{\max}^0$ and $F_0(\Sigma^0_{\text{max}})$  are countably infinite.

\medskip \textbf{Example IV: A set of Cantor type.} Let $b=3$ and $p_{r0} = p_{r2} = 2/5$, $p_{r1} = 1/5$ for $r =0,2$ and $p_{1k} = 1/3$ for $k \in \Sigma$.
Then $$\Sigma_{\max}^0 = \{v \in \Sigma^\infty: v_i \in \{0,2\} \text{ for all } i \geq 1 \}.$$
$F_0(\Sigma^0_{\text{max}})$ is a perfect set with Hausdorff dimension $\log 2 / \log (5/2) = 0.756\ldots$ (See, e.g.\ \cite[Example 4.5]{falconer}.)

\begin{center} \textbf{Acknowledgements} \end{center}
The research of the second author was supported by DFG grant NE 828/2-1.
The research of the third author was supported by the FSMP, reference: ANR-10-LABX-0098, and a Feodor Lynen Research Fellowship of the Alexander von Humboldt
Foundation.

\section*{Appendix}

Algorithm \ref{Alg:RadSel} describes Radix Select on strings. Here, we assume numbers are given in their $b$-ary expansions over the alphabet $\{0,\ldots,b-1\}$ and let
\begin{itemize}
\item $k$ denote the sought rank, 
\item A = $[s_1,\ldots,s_n]$ be the input list of size $n$ with strings $s_1, \ldots, s_n$,
\item length($B$) denote the number of strings in a list of strings $B$,
\item $B[j]$ denote the $j$-th string in a list of strings $B$, and
\item $s_j[\ell]$ denote the $\ell$-th symbol of the string $s_\ell$.
\end{itemize}

\begin{algorithm}[h]
\caption{Radix Select}\label{Alg:RadSel}
\begin{algorithmic}[]
\Procedure{RadSel}{{\bf int} $k$, {\bf int} $b$, {\bf list} $A$}
\Return \Call{RSLoop}{$k$,$b$,$A$,$1$}
\EndProcedure

\Procedure{RSLoop}{{\bf int} $k$, {\bf int} $b$, {\bf list} $A$, {\bf int} $x$}
\Comment{Iteration in \Call{RadSel}{}}

\If{$k>\text{length}(A)$}
\Return $\text{'invalid input'}$

\EndIf

\If{$\text{length}(A) =1$}
\Return $A[1]$

\Else

\For{$i \in \{0,\ldots,b-1\}$}\Comment{Initializing the 'buckets'}

\State $A_i\gets\text{empty list}$

\EndFor

\For{$s\in A$}\Comment{Distributing data into 'buckets'}

\State $A_{s[x]}\gets A_{s[x]}\cup \{ s\}$

\EndFor

\State $L\gets 1$
\State $F\gets 0$

\While{$F+\text{length}(A_L)<k$}\Comment{Finding $L$ with $s_{(k)}\in A_L$}

\State $F\gets F +\text{length}(A_L)$
\State $L\gets L+1$

\EndWhile

\Return \Call{RSLoop}{$k-F$, $b$, $A_L$, $x+1$}\Comment{Continue Search in $A_L$}
\EndIf

\EndProcedure
\end{algorithmic}
\end{algorithm}

\end{document}